\documentclass{amsart}

\newcommand{\apref}[3]{\hyperref[#2]{#1\ref*{#2}#3}}


\usepackage{enumerate}
\usepackage[latin1]{inputenc}
\usepackage{dsfont}
\usepackage{amssymb,amsthm,amsmath}

\input{xy}
\xyoption{all}

\usepackage{mathrsfs}

\theoremstyle{plain}
\newtheorem{prop}{Proposition}[section]
\newtheorem{lemma}[prop]{Lemma}

\newtheorem{thm}[prop]{Theorem}

\newtheorem*{thmAnn}{Theorem A}
\newtheorem*{thmBnn}{Theorem B}

\newtheorem{cor}[prop]{Corollary}

\theoremstyle{definition}

\theoremstyle{remark}
\newtheorem{remark}[prop]{Remark}

\setlength{\parindent}{0pt}
\setlength{\parskip}{1ex}

\newcommand{\as}{\text{\rm as}}
\newcommand{\hol}{\text{\rm hol}}

\newcommand{\verklein}{.7}
\newcommand{\verschieb}{-.5cm}
\newcommand{\verschiebt}{-.3cm}

\newcommand{\slow}{\text{\rm slow}}
\newcommand{\fast}{\text{\rm fast}}

\newcommand{\TO}{\mc L}

\DeclareMathOperator{\diam}{diam}
\DeclareMathOperator{\len}{len}
\DeclareMathOperator{\s}{s}

\DeclareMathOperator{\sign}{sign}

\DeclareMathOperator{\Gen}{Gen}
\newcommand{\redu}{\text{red}}
\newcommand{\reg}{\text{reg}}



\DeclareMathOperator{\GL}{GL}

\DeclareMathOperator{\SL}{SL}
\DeclareMathOperator{\PSL}{PSL}
\DeclareMathOperator{\PGL}{PGL}

\DeclareMathOperator{\PSO}{PSO}

\DeclareMathOperator{\Unit}{U}

\DeclareMathOperator{\diag}{diag}


\DeclareMathOperator{\Tr}{Tr}
\DeclareMathOperator{\tr}{tr}

\DeclareMathOperator{\Ima}{Im}
\DeclareMathOperator{\Rea}{Re}

\DeclareMathOperator{\pr}{pr}




\DeclareMathOperator{\Ind}{Ind}


\DeclareMathOperator{\Stab}{Stab}



\newcommand{\dec}{\text{dec}}


\newcommand\N{\mathbb{N}}

\newcommand\R{\mathbb{R}}
\newcommand\Z{\mathbb{Z}}
\newcommand\C{\mathbb{C}}

\newcommand{\h}{\mathbb{H}}

\newcommand{\mc}[1]{\mathcal #1}

\newcommand{\wt}{\widetilde}
\newcommand{\wh}{\widehat}

\newcommand{\eps}{\varepsilon}


\DeclareMathOperator{\dvol}{dvol}




\DeclareMathOperator{\SFE}{SEF}
\DeclareMathOperator{\FFE}{FEF}


\DeclareMathOperator{\id}{id}

\DeclareMathOperator{\Fct}{Fct}
\DeclareMathOperator{\MCF}{MCF}

\newcommand{\sceq}{\mathrel{\mathop:}=}

\newcommand{\mat}[4]{\begin{pmatrix} #1&#2\\#3&#4\end{pmatrix}}
\newcommand{\bmat}[4]{\begin{bmatrix} #1&#2\\#3&#4\end{bmatrix}}
\newcommand{\textmat}[4]{\left(\begin{smallmatrix} #1&#2 \\ #3&#4
\end{smallmatrix}\right)}
\newcommand{\textbmat}[4]{\left[\begin{smallmatrix} #1&#2 \\ #3&#4
\end{smallmatrix}\right]}

\usepackage[colorlinks,breaklinks]{hyperref}
\usepackage{graphics}
\setlength{\textheight}{20cm}
\usepackage[T1]{fontenc} 
\usepackage{pdflscape}
\allowdisplaybreaks
\usepackage{mathtools}
\usepackage{epstopdf}

\makeatletter
\def\subsubsection{\@startsection{subsubsection}{3}%
  \z@{.5\linespacing\@plus.7\linespacing}{-.5em}%
  {\normalfont\bfseries}}
\makeatother

\begin{document}

\title[A transfer-operator-based relation]{A transfer-operator-based relation between Laplace eigenfunctions and zeros of Selberg zeta functions}
\author[A.\@ Adam]{Alexander Adam}
\address{AA: Institut de Math\'ematiques de Jussieu - Paris Rive Gauche, Sorbonne Universit\'e, Campus Pierre et Marie Curie, 4, place Jussieu, Boite Courrier 247 - 75252 Paris Cedex 05, France}
\email{alexander.adam@imj-prg.fr}
\author[A.\@ Pohl]{Anke Pohl}
\address{AP: University of Bremen, Department 3 -- Mathematics, Bibliothekstr.\@ 
5,  28359 Bremen, Germany}
\email{apohl@uni-bremen.de}
\subjclass[2010]{Primary: 37C30; Secondary: 11F03, 37D40}
\keywords{Selberg zeta function, Maass cusp forms, transfer operators, eigenspaces}
\begin{abstract} 
Over the last few years Pohl (partly jointly with coauthors) developed dual `slow/fast' transfer operator approaches to automorphic functions, resonances, and Selberg zeta functions for a certain class of hyperbolic surfaces $\Gamma\backslash\h$ with cusps and all finite-dimensional unitary representations $\chi$ of $\Gamma$.

The eigenfunctions with eigenvalue $1$ of the fast transfer operators determine the zeros of the Selberg zeta function for $(\Gamma,\chi)$. Further, if $\Gamma$ is cofinite and $\chi$ is the trivial one-dimensional representation then highly regular eigenfunctions with eigenvalue $1$ of the slow transfer operators characterize Maass cusp forms for $\Gamma$. Conjecturally, this characterization extends to more general automorphic functions as well as to residues at resonances. 

In this article we study, without relying on Selberg theory, the relation between the eigenspaces of these two types of transfer operators for any Hecke triangle surface $\Gamma\backslash\h$ of finite or infinite area and any finite-dimensional unitary representation $\chi$ of the Hecke triangle group $\Gamma$. In particular we provide explicit isomorphisms between relevant subspaces. This solves a conjecture by M\"oller and Pohl, characterizes some of the zeros of the Selberg zeta functions independently of the Selberg trace formula, and supports the previously mentioned conjectures.
\end{abstract}
\maketitle

\section{Introduction}

Let $\h = \PSL_2(\R)/\PSO(2)$ denote the hyperbolic plane, let $\Gamma$ be a Fuchsian group (that is, a discrete subgroup of $\PSL_2(\R)$), and let $\chi\colon \Gamma\to \Unit(V)$ be a unitary representation of $\Gamma$ on a finite-dimensional complex vector space $V$. The relation between the geometric and the spectral properties of $X\sceq \Gamma\backslash\h$ (e.\,g., volume, periodic geodesics, etc., among the geometric objects; $L^2$-eigenvalues, resonances, $(\Gamma,\chi)$-automorphic functions, etc., among the spectral entities) is an important subject with a long, rich history and ongoing high-level activity. Among the various tools and methods used in the study of this relation, one is the Selberg zeta function, another one the development of transfer operator techniques.

The Selberg zeta function establishes such a relation on the level of spectra, namely between the primitive geodesic length spectrum among the geometric properties and the Laplace resonances (i.\,e., the $L^2$-spectral parameters and the scattering resonances) among the spectral properties of $X$. More precisely, it follows (for $X$ of infinite area at least for the case of $\chi$ being the trivial character) from the properties of the Selberg zeta function that the primitive geodesic length spectrum of $X$ and the resonances of the Laplacian on $X$ determine each other. 

By its very nature, the Selberg zeta function cannot provide any such relation beyond the spectral level (unless additional information is used). This means in particular that it is not possible to construct an $L^2$-eigenfunction of the Laplacian or a resonant state using only (geometric) information provided by the properties of the Selberg zeta function. 

The modular surface $\PSL_2(\Z)\backslash\h$ was the first hyperbolic orbifold for which transfer operator techniques allowed to show a relation between the geodesic flow and Laplace eigenfunctions beyond the spectral level. 

More precisely, the combination of the articles \cite{Artin, Series, Mayer_thermo, Mayer_thermoPSL, Efrat_spectral, Lewis_Zagier, Bruggeman, Chang_Mayer_transop} shows that the even and odd Maass cusp forms for $\PSL_2(\Z)$ are isomorphic to the eigenfunctions with eigenvalue $+ 1$ and $-1$, respectively, of Mayer's transfer operator
\begin{equation}\label{TOMayer}
 \TO_s^{\text{\rm Mayer}}f(z) = \sum_{n\in\N} \frac{1}{(z+n)^{2s}} f\left(\frac{1}{z+n}\right).
\end{equation}
The transfer operator arises purely from a discretization and symbolic dynamics for the geodesic flow on $\PSL_2(\Z)\backslash\h$. Thus, this isomorphism provides a purely geometric characterization of the Maass cusp forms for $\PSL_2(\Z)$, not only of their eigenvalues or spectral parameters. Hence, these transfer operator results indeed establish a relation between geometric and spectral entities of $X$ beyond the spectral level.

The results in \cite{Artin, Series, Mayer_thermo, Mayer_thermoPSL, Efrat_spectral, Lewis_Zagier, Bruggeman, Chang_Mayer_transop} include dynamical interpretations also for other parts of the spectrum \cite{Chang_Mayer_period, Chang_Mayer_transop, Lewis_Zagier} as well as a representation of the Selberg zeta function as a Fredholm determinant of $\pm\TO_s^{\text{\rm Mayer}}$. A generalization to certain finite index subgroups of $\PSL_2(\Z)$ were achieved in \cite{Chang_Mayer_transop, Deitmar_Hilgert, Fraczek_Mayer_Muehlenbruch}. An alternative characterization of the Maass cusp forms for $\PSL_2(\Z)$ by means of a transfer operator deriving from a discretization of the geodesic flow on $\PSL_2(\Z)\backslash\h$ is provided by the combination \cite{Mayer_Stroemberg, Bruggeman_Muehlenbruch, Mayer_Muehlenbruch_Stroemberg}.

Until 2009, analogous characterizations of Maass cusp forms (or any other $L^2$-eigen\-func\-tions or resonant states) could not be achieved for any other hyperbolic orbifold $\Gamma\backslash\h$. Only the following result, of a weaker and less precise nature, could be established: For a large class of Fuchsian groups $\Gamma$, a transfer operator family $\TO_s$ ($s\in\C\setminus\{\text{poles}\}$) was found whose Fredholm determinant represents the Selberg zeta function of $\Gamma$, sometimes only up to certain correction functions \cite{Fried_zetafunctionsI, Pollicott, Fried_triangle, Morita_transfer, Patterson_Perry, Guillope_Lin_Zworski, Mayer_Muehlenbruch_Stroemberg}. Taking advantage of the spectral interpretation of the zeros of the Selberg zeta function (proved, e.\,g., by means of the Selberg trace formula) immediately implies that the eigenspaces with eigenvalue $1$ of these transfer operators are in some relation to the Maass cusp forms. This result however is only a dimension statement if at all (Jordan blocks may occur); it does not provide an insightful isomorphism (see the more detailed discussion below).

For some of the transfer operators developed for Hecke triangle groups it could even be shown that the eigenfunctions with eigenvalue $1$ are solutions of certain functional equations with finitely many terms  \cite{Mayer_Muehlenbruch_Stroemberg}, an important step towards developing an analogue of the results for $\PSL_2(\Z)$. However, to this day, these solutions could not been shown to be indeed period functions (unless $\PSL_2(\Z)$ is considered). In other words, an isomorphism between Maass cusp forms and solutions of these functional equations is still missing.

Nevertheless, such transfer operator approaches to Selberg zeta functions proved to be helpful in the study of resonances and more. As a few examples we name the results on resonance counting \cite{Guillope_Lin_Zworski} and location \cite{Naud_resonancefree}, the numerical studies of the structure of the set of resonances for Fuchsian Schottky groups \cite{Borthwick_numerical, Borthwick_Weich}, the numerical and rigorous studies of the behavior of zeros of the Selberg zeta function under perturbations \cite{Fraczek_diss, Fraczek_Mayer, Bruggeman_Fraczek_Mayer}, the progress towards Zaremba's conjecture \cite{Bourgain_Kontorovich} and the generalization of Selberg's $3/16$ Theorem \cite{Bourgain_Gamburd_Sarnak}. We refer to \cite{Pohl_hecke_infinite} for more examples.

It is reasonable to expect that a deeper understanding of the relation between the geometry of a hyperbolic orbifold $\Gamma\backslash\h$, its automorphic functions and resonant states, and its Selberg zeta functions allow us to prove even deeper results. We refer to \cite{Anantharaman_Zelditch, Bettin_Conrey} where the aforementioned deeper results for $\PSL_2(\Z)$ are used. 

The results in this article are a further step towards such a deeper understanding. We remark that the results presented in this article do not make any use of the Selberg trace formula or scattering theory. Therefore they provide a proper alternative, complement or extension of the relations obtained with these other methods.

The articles \cite{Pohl_diss, Hilgert_Pohl, Pohl_mcf_general, Pohl_mcf_Gamma0p, Moeller_Pohl, Pohl_Symdyn2d, Pohl_hecke_infinite, Pohl_spectral_hecke, Pohl_representation} document part of a recent program to systematically develop dual `slow/fast' transfer operator approaches to automorphic functions, resonances and Selberg zeta functions for a certain class of (cofinite and non-cofinite) Fuchsian groups $\Gamma$ with cusps. 

\begin{figure}[h]
\xymatrix{
&\fbox{\begin{minipage}{1.7cm} geod.\@ flow\\ on $X$ \end{minipage}}\ar[dl]_{\begin{minipage}{1.9cm}\begin{center}{\footnotesize slow discre- tization}\end{center}\end{minipage}} \ar[dr]^{\quad\begin{minipage}{1.8cm}\begin{center}{\footnotesize fast discre-tization}\end{center}\end{minipage}}
\\
\fbox{
\begin{minipage}{3.85cm}
slow (`finite-term') tra\-ns\-fer oper\-ators $\TO_s^\slow$
\end{minipage}\ar[d]
\ar@{<-->}[rr]^{?}
}
&&
\fbox{
\begin{minipage}{3.55cm}
fast (`infinite-term')\\ tra\-ns\-fer operators $\TO_s^\fast$
\end{minipage}\ar[d]
}
\\
\fbox{
\begin{minipage}{3.85cm}
$\{ f = \TO_s^\slow f\} \cong \MCF_s$; 
\\
conjecture on automorphic cusp forms;
\\
conj.\@ on resonances
\end{minipage}
}
&&
\fbox{
\begin{minipage}{3.55cm}
$Z(s) = \det\left(1-\TO_s^\fast\right)$
\end{minipage}
}
}
\caption{Dual transfer operator approaches}
\end{figure}\label{fig:program}

A rough schematic overview of the structure of these transfer operator approaches is given in Figure~\ref{fig:program}. We refer to Section~\ref{sec:prelims} below for more details. In Figure~\ref{fig:program}, all entities may depend on $X=\Gamma\backslash\h$. The function $Z=Z_{\Gamma,\chi}$ denotes the Selberg zeta function of $(\Gamma,\chi)$, and $\MCF_s$ denotes the space of Maass cusp forms for $\Gamma$ with spectral parameter $s$. 

Further, `slow' refers to the property that each point of the discrete dynamical system used in the definition of the `slow' transfer operators has finitely many preimages only, or equivalently, that the symbolic dynamics arising from the discretization of the geodesic flow on $X$ uses a finite alphabet only (see \cite{Pohl_diss, Pohl_Symdyn2d}).  Hence, `slow' transfer operators involve finite sums only. In contrast, `fast' means that points with infinitely but countably many preimages occur, and hence the associated `fast' transfer operators involve infinite sums.  The fast discretizations arise from the slow ones by a certain induction or acceleration process (which also explains the naming). We refer to \cite{Moeller_Pohl, Pohl_hecke_infinite, Pohl_representation} for details. 

The discretizations and the transfer operators developed within this program are typically different from those in the articles mentioned above. An exception are the fast discretization and fast transfer operator for the modular group $\PSL_2(\Z)$ which coincide essentially with the ones in \cite{Artin, Series} and \cite{Mayer_thermo, Mayer_thermoPSL}, respectively.

We refer to Section~\ref{sec:proof} below for examples of the transfer operators developed within this program. Further, we refer to the articles \cite{Pohl_diss, Hilgert_Pohl, Pohl_mcf_general, Pohl_mcf_Gamma0p, Moeller_Pohl, Pohl_Symdyn2d, Pohl_hecke_infinite, Pohl_spectral_hecke, Pohl_representation} and the references therein for a more comprehensive exposition of such transfer operator approaches, their history and their relation to mathematical quantum chaos and other areas, and remain here rather brief.

If $\chi$ is the trivial one-dimensional representation and $\Gamma$ is cofinite (and admissible for these techniques) then the slow transfer operators $\TO_s^\slow$ provide a dynamical characterization of the Maass cusp forms for $\Gamma$ \cite{Pohl_mcf_general}. More precisely, for $s\in\C$, $\Rea s\in (0,1)$, the Maass cusp forms with spectral parameter $s$  are isomorphic to the eigenfunctions of the transfer operator $\TO^\slow_s$ with eigenvalue $1$ of sufficient regularity (`period functions'). The proof of the isomorphism between Maass cusp forms and these period functions takes advantage of the characterization of Maass cusp forms in parabolic cohomology as provided by \cite{BLZ_part2}. Both, \cite{Pohl_mcf_general} and \cite{BLZ_part2} do not rely on the Selberg trace formula, any scattering theory, or the Selberg zeta function.

For general finite-dimensional unitary representations $\chi$ and general admissible Fuchsian groups $\Gamma$ it is expected that the sufficiently regular eigenfunctions with eigenvalue $1$ of $\TO^\slow_s$ characterize $(\Gamma,\chi)$-automorphic functions or are closely related to the residue operator at the resonance $s$ \cite{Pohl_hecke_infinite, Pohl_representation}.

The fast operators $\TO^\fast_s$ are nuclear operators of order $0$ that represent the Selberg zeta function $Z_{\Gamma,\chi}$ of $\Gamma$ as a Fredholm determinant:
\[
 Z_{\Gamma,\chi}(s) = \det\left(1-\TO^\fast_s\right).
\]
Hence the zeros of $Z_{\Gamma,\chi}$ are determined by the eigenfunctions of $\TO^\fast_s$ with eigenvalue $1$ \cite{Moeller_Pohl, Pohl_hecke_infinite, Pohl_spectral_hecke,  Pohl_representation}. Also this proof is independent of the Selberg trace formula and of geometric scattering theory. 

For several combinations of $(\Gamma,\chi)$ (e.g., if $\Gamma$ is any cofinite geometrically finite, non-elementary Fuchsian group or if $\chi$ is the trivial character and $\Gamma$ is geometrically finite, non-elementary) Selberg theory, geometric scattering theory or microlocal analysis allows to show a relation between (some of) the zeros of $Z_{\Gamma}$ and the spectral parameters of the Maass cusp forms for $\Gamma$ or $(\Gamma,\chi)$-automorphic forms and, more generally, the resonances of $\Delta$ on $\Gamma\backslash\h$. Hence it provides a link (on the spectral level) between the two bottom objects in Figure~\ref{fig:program}.

It is natural to ask if this relation derives as a shadow of a link between the geodesic flow and certain spectral entities beyond the spectral level. In other words, the question arises if and how these spectral entities can be explicitly characterized as eigenfunctions with eigenvalue $1$ of the fast transfer operator $\TO_s^\fast$.

In order to simplify the discussion of the nature of this question we restrict---for a moment---to the case that $\Gamma$ is a lattice (that is, $\Gamma$ is cofinite \cite[Definition~1.8]{Raghunathan}), $\chi$ the trivial character and to Maass cusp forms as the spectral entities of interest.

Selberg theory in combination with functional analysis for nuclear operators of low orders on Banach spaces allows us to deduce only a rather weak version of such a link. We may only conclude that some, rather unspecified subspaces of eigenfunctions of $\TO_s^\fast$ are isomorphic to some, rather unspecified subspaces of Maass cusp forms (or period functions and hence certain eigenfunctions of $\TO_s^\slow$). At the current state of art, neither Selberg theory nor any other non-transfer operator based approach provides us with a tool to answer any of the following questions:
\begin{enumerate}[(i)]
\setlength{\itemsep}{1mm}
\item\label{Q1} How can we characterize these subspaces of eigenfunctions of $\TO_s^\fast$, how the subspaces of Maass cusp forms? 
\item Is there an insightful isomorphism between these subspaces?
\item The zeros of Selberg zeta functions do not only consist of the spectral parameters of Maass cusp forms but also of scattering resonances and topological zeros. All of these zeros are detected by eigenfunctions with eigenvalue $1$ of $\TO_s^\fast$. Which additional properties of these eigenfunctions are needed in order to distinguish the spectral parameters of Maass cusp forms from scattering resonances?
\item\label{Q4} The transfer operator $\TO_s^\fast$ may have Jordan blocks with eigenvalue $1$. The order of $s$ as a zero of the Selberg zeta functions corresponds to the algebraic multiplicity (hence the size of the Jordan blocks), not necessarily the geometric multiplicity of $1$ as an eigenvalue of $\TO_s^\fast$. Further, $s$ as a spectral parameter for Maass cusp forms may have a higher multiplicity. In such a case, are the dimensions of the $1$-eigenspace of $\TO_s^\fast$ (considered as acting on which space?) and the space of the Maass cusp forms equal? If not, does the transfer operator detect only some of the Maass cusp forms?
\end{enumerate}

In this article we show that---purely within the framework of transfer operators---we are able to provide such a link between the geodesic flow and certain spectral entities beyond the spectral level and to answer questions in \eqref{Q1}-\eqref{Q4} at least for the case of Maass cusp forms. Moreover, we lay the groundwork for the generalization to other spectral entities as well. Their complete characterization in terms of eigenfunctions of $\TO_s^\fast$ has to await their characterization in terms of eigenfunctions of $\TO_s^\slow$.

The full details for the construction of fast transfer operators $\TO_s^\fast$ are up to now provided for (cofinite and non-cofinite) Hecke triangle groups only. Anyhow, the structure of these constructions clearly applies to a wider class of Fuchsian groups. 

However, also in this article we focus on the family of Hecke triangle groups and show that the $1$-eigenspaces of the slow and fast transfer operators are indeed isomorphic (the dotted `?'-arrow in Figure~\ref{fig:program}) as conjectured in \cite{Moeller_Pohl, Pohl_hecke_infinite, Pohl_representation}.

\begin{thmAnn}
Let $\Gamma$ be a (cofinite or non-cofinite) Hecke triangle group and $\chi$ a finite-dimensional unitary representation of $\Gamma$, and let $\Rea s > 0$. Suppose that $\TO_s^\slow$ and $\TO_s^\fast$ are the associated families of slow and fast transfer operators, respectively. Then the eigenfunctions with eigenvalue $1$ of $\TO_s^\fast$ are isomorphic to the real-analytic eigenfunctions with eigenvalue $1$ of $\TO_s^\slow$ that satisfy a certain growth restriction.
\end{thmAnn}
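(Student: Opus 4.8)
The guiding principle is that the fast transfer operator is an \emph{accelerated} version of the slow one, obtained by collapsing the dynamics at the parabolic fixed point, so that the eigenvalue-$1$ equation for $\TO_s^\fast$ is a resummed form of the eigenvalue-$1$ equation for $\TO_s^\slow$. I would begin by writing both operators in their explicit $V$-valued form for the Hecke triangle group $\Gamma$, its parabolic generator $T$, and the representation $\chi$, and by decomposing the slow operator into the finitely many branches of the underlying discretized flow. Exactly one of these branches, say $P_s$ (incorporating $\chi(T)$ and the M\"obius action of $T$), produces the indifferent fixed point at the cusp; the remaining branches are uniformly contracting. The equation $f = \TO_s^\slow f$ then reads $(1 - P_s)f = R_s f$, where $R_s$ collects the contracting branches.

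The heart of the matter is the algebraic passage between the two fixed-point equations. Formally $f = (1-P_s)^{-1}R_s f$, and expanding $(1-P_s)^{-1} = \sum_{n\ge 0}P_s^n$ amounts to summing the contributions of $f$ along the full parabolic orbit $T^n$; after the change of coordinates built into the two discretizations, this geometric series is precisely the infinite sum defining $\TO_s^\fast$. I would therefore define the map from slow to fast eigenfunctions as this resummation, and its candidate inverse as the \emph{unfolding} that redistributes a fast eigenfunction into its individual orbit contributions to produce a function on the larger domain of $\TO_s^\slow$. Linearity is immediate, and the two maps are built to be mutually inverse by construction; the substance lies in checking that each genuinely lands in the prescribed eigenspace.

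It then remains to match regularity and growth. Eigenfunctions of $\TO_s^\fast$ are automatically holomorphic, since $\TO_s^\fast$ is a nuclear operator acting on a space of holomorphic functions, so their restrictions are real-analytic; conversely the real-analyticity imposed on the slow side is exactly what allows the holomorphic extension on which $\TO_s^\fast$ is defined. The growth restriction on the slow eigenfunctions is the condition guaranteeing, on the one hand, convergence of the series $\sum_{n\ge 0}P_s^n f$ and, on the other, vanishing of the boundary terms that appear when the telescoping over the parabolic orbit is carried out; here the hypothesis $\Rea s > 0$ controls the decay of $P_s^n$ along the orbit, the unitarity of $\chi$ ensuring that $\chi(T)^n$ contributes no growth.

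The step I expect to be the main obstacle is this simultaneous control of function spaces and convergence: one must prove that the resummation converges on precisely the class of real-analytic slow eigenfunctions obeying the stated growth bound, that the resulting function lies in the correct holomorphic Banach space carrying $\TO_s^\fast$, and that the unfolding recovers a slow eigenfunction satisfying the growth restriction, with no eigenfunctions lost or spuriously produced. A further layer of care is needed to make the branch decomposition, the resolvent estimates, and the growth bounds uniform in $\chi$ and, in the non-cofinite case, across the funnel geometry, so that a single argument covers all Hecke triangle groups and all finite-dimensional unitary $\chi$ at once. Pinning down the growth restriction so that it is neither too strong---which would discard genuine zeros of $Z_{\Gamma,\chi}$---nor too weak---which would admit slow eigenfunctions with no fast counterpart---is the delicate point that makes the correspondence an isomorphism.
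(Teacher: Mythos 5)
Your guiding picture is exactly the one the paper itself sketches as its ``informal abstract deduction'': the slow eigenfunction equation $(1-\alpha_s(p))\varphi = R_s\varphi$ is converted into the fast one by the difference operator $1-\alpha_s(p)$, and inverted by the geometric resummation $\sum_{n\ge 0}\alpha_s(p^n)$. But the proposal stops at this heuristic, and the one concrete analytic claim you make to justify it is wrong: the series $\sum_{n\ge 1}\alpha_s(p^n)$ does \emph{not} converge for all $\Rea s>0$ --- it converges only for $\Rea s>\tfrac12$, and the theorem is claimed for the full half-plane $\Rea s>0$ (minus $s=\tfrac12$). The paper handles $0<\Rea s\le\tfrac12$ by defining $\TO_{-1,s}^\fast$ through the meromorphic continuation of the Lerch zeta function $\zeta(s,a,w)$ (after diagonalizing the unitary matrix $\chi(p)$), and then has to \emph{re-prove} the two key identities $\alpha_s(g_{-1})(1+\TO_{-1,s}^\fast)f_{-1}=\TO_{-1,s}^\fast f_{-1}$ and $\TO_{-1,s}^\fast(1-\alpha_s(g_{-1}))\varphi=\alpha_s(g_{-1})\varphi$ for the continued operator by explicit Lerch-function manipulations and asymptotic expansions (Propositions~\ref{prop:seq} and \ref{prop:ae_generic}). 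Unitarity of $\chi$ does not rescue convergence; it only lets you reduce to scalar components $e^{2\pi i na}$.

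Two further ideas that carry the actual proof are absent. First, when one checks that the unfolded function is again a slow eigenfunction, there is an obstruction term $Q_0=\alpha_s(g_{-1})\varphi-\TO_{-1,s}^\fast\psi$, which is an $\alpha_s(g_{-1})$-\emph{invariant} real-analytic function; the growth restriction $\varphi(x)=c/x+O(1)$ is precisely what forces $Q_0=0$, and showing this on the critical line $\Rea s=\tfrac12$, $s\ne\tfrac12$, needs a separate oscillation argument with $|x^{it}-(x+k\ell)^{it}|$ (Lemma~\ref{lem:Q0}). Your ``vanishing of boundary terms in the telescoping'' gestures at this but does not identify the invariant function or why the stated growth kills it. Second, to even define the map $\varphi\mapsto(1-\alpha_s(p))\varphi$ as an element of the Banach space $\mc B$ one must first prove that every real-analytic slow eigenfunction extends holomorphically to the full half-plane $\C^\ast_R$ (and the fast components to corresponding maximal domains); the paper does this by iterating the transfer operator and using the contraction properties of the generating alphabet (Lemma~\ref{lem:allcontracts}, Propositions~\ref{prop:slow_extension} and \ref{prop:fast_extension}). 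You correctly flag this as the main obstacle, but the mechanism --- that long words in the alphabet eventually map any compact subset of the half-plane into the original domain of analyticity --- is the missing ingredient, not merely a technical check.
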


The isomorphism in Theorem~A is explicit and constructive. Moreover, if $\Gamma$ is a lattice and $\chi$ is the trivial one-dimensional representation then the period functions (i.\,e., those eigenfunctions of $\TO_s^\slow$ that are isomorphic to the Maass cusp forms for $\Gamma$ with spectral parameter $s$) can be characterized as a certain subspace of the eigenfunctions of $\TO_s^\fast$. More generally, additional conditions of a certain type on the eigenfunctions of $\TO_s^\slow$ translate to essentially the same conditions on the eigenfunctions of $\TO_s^\fast$. We refer to Theorems~\ref{thm:main_finite}, \ref{thm:main_theta} and \ref{thm:main_nonco} below for more details. 

Neither the proof of Theorem~A nor the characterization of the subspace of eigenfunctions of $\TO_s^\fast$ that corresponds to period functions---and hence Maass cusp forms---uses Selberg theory. Therefore these results allow us to classify some of the zeros of the Selberg zeta function purely within this transfer operator framework and independently of the use of a Selberg trace formula.

Theorem~A, more precisely Theorems~\ref{thm:main_finite}, \ref{thm:main_theta} and \ref{thm:main_nonco} below in combination with the characterization of Maass cusp forms as eigenfunctions of the slow transfer operators $\TO_s^\slow$, yields answers to the questions in \eqref{Q1}-\eqref{Q4} and provides, for Hecke triangle groups other than $\PSL_2(\Z)$, the first result of this kind. As already mentioned, for the case that $\Gamma=\PSL_2(\Z)$ and that $\chi$ is the trivial one-dimensional representation even more is known due to the combination of \cite{Bruggeman, Chang_Mayer_period, Chang_Mayer_transop, Lewis_Zagier, Deitmar_Hilgert}. We comment on it in more details in Section~\ref{remarks} below.

The restriction to Hecke triangle groups allows us to actually prove a stronger statement than Theorem~A. Each Hecke triangle group commutes with a certain element $Q\in\PGL_2(\R)$ of order $2$, which acts as an orientation-reversing Riemannian isometry on $\h$. This exterior symmetry is compatible with the transfer operators, and hence induces their splitting into odd parts $\TO_s^{\slow,-}$ and $\TO_s^{\fast,-}$ as well as even parts $\TO_s^{\slow,+}$ and $\TO_s^{\fast,+}$. 

If $\Gamma$ is cofinite, $\chi$ is the trivial character and $\Rea s \in (0,1)$ then the sufficiently regular eigenfunctions with eigenvalue $1$ of $\TO_s^{\slow,+}$ (equivalently, the eigenfunctions with eigenvalue $1$ of $\TO_s^\slow$ that are invariant under the action of $Q$) are isomorphic to the even Maass cusp forms for $\Gamma$. Likewise, the eigenfunctions with eigenvalue $1$ of $\TO_s^{\slow,-}$ (equivalently, the eigenfunctions with eigenvalue $1$ of $\TO_s^\slow$ that are anti-invariant under the action of $Q$) are isomorphic to the odd Maass cusp forms for $\Gamma$ \cite{Moeller_Pohl, Pohl_spectral_hecke}. The Fredholm determinant of the transfer operator family $\TO_s^{\fast,+}$ equals the Selberg-type zeta function whose zeros encode the even part of the spectrum of $\Gamma$, and the Fredholm determinant of $\TO_s^{\fast,-}$ equals the Selberg-type zeta function of the odd part of the spectrum of $\Gamma$ \cite{Pohl_spectral_hecke}.

Instead of Theorem~A we show its strengthend version that considers separately the odd and even transfer operators.

\begin{thmBnn}
Let $\Gamma$ be a (cofinite or non-cofinite) Hecke triangle group, $\chi$ a finite-dimensional unitary representation of $\Gamma$, and $\Rea s > 0$, and suppose that $\TO_s^{\slow,\pm}$ and $\TO_s^{\fast,\pm}$ are the associated families of slow/fast even/odd transfer operators. Then the real-analytic eigenfunctions with eigenvalue $1$ of $\TO_s^{\slow,+}$ (respectively  $\TO_s^{\slow,-}$) that satisfy a certain growth condition are isomorphic to the eigenfunctions with eigenvalue $1$ of $\TO_s^{\fast,+}$ (respectively  $\TO_s^{\fast,-}$).
\end{thmBnn}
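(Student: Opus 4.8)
The plan is to prove Theorem~B by constructing, separately for the even and the odd sign, an explicit pair of mutually inverse linear maps between the two eigenspaces, and then to deduce Theorem~A by forgetting the sign. The conceptual starting point is that the fast transfer operator $\TO_s^{\fast,\pm}$ is an \emph{acceleration} of the slow one $\TO_s^{\slow,\pm}$: the underlying fast discretization is obtained from the slow one by collapsing each maximal excursion of an orbit into the parabolic direction into a single return. On the operator level this means that each infinite-sum branch of $\TO_s^{\fast,\pm}$ is the geometric-type resummation over the coset family $\{S T^{n}\}_{n\ge 1}$ (where $T$ is the parabolic and $S$ the order-two generator of $\Gamma$) of a correspondingly indexed family of finite branches of $\TO_s^{\slow,\pm}$, the $n$-th summand carrying the weight $\chi(ST^{n})$ together with the automorphy factor of $ST^{n}$ for the weight-$2s$ action. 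I would therefore first record both operators in fully explicit component form on the respective direct sums $\bigoplus_i \Fct(I_i,V)$, so that the combinatorics of which slow branches resum to which fast branch becomes visible and the eigenvalue-$1$ equations $f=\TO_s^{\slow,\pm}f$ and $g=\TO_s^{\fast,\pm}g$ turn into explicit finite-term respectively infinite-term functional-equation systems.

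The direction from fast to slow I expect to be the routine one. The operators $\TO_s^{\fast,\pm}$ are nuclear of low order on a suitable space of functions holomorphic on a disc (or a union of discs) containing the defining intervals, so every eigenfunction with eigenvalue $1$ is automatically real-analytic and carries the decay that such a space enforces; in particular it obeys the required growth restriction at the parabolic fixed point for free. I would define the map to the slow side by projecting $g$ onto the slow components and splitting off the $n=1$ term of each infinite sum: the identity
\begin{equation*}
\sum_{n\ge 1}(\cdots)_{n} \;=\; (\cdots)_{1} \;+\; \sum_{n\ge 1}(\cdots)_{n+1}
\end{equation*}
together with $T\,T^{n}=T^{n+1}$ rewrites the fast equation for $g$ as the finite slow equation for its image, once the shifted tail is recognized as the slow map applied to a pulled-back copy of $g$. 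Checking that the result indeed solves $f=\TO_s^{\slow,\pm}f$ is then bookkeeping with the cocycle relation for the weight-$2s$ action and multiplicativity of $\chi$.

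The substantive direction is from slow to fast, and this is where the growth condition does all the work. Starting from a real-analytic eigenfunction $f$ of $\TO_s^{\slow,\pm}$, I would iterate the finite functional equation along the parabolic branch: applying it $N$ times peels off the first $N$ summands corresponding to $S T^{1},\dots,S T^{N}$ and leaves a remainder that is $f$ pulled back by $T^{N}$, evaluated near the parabolic fixed point. The geometric heart of the argument is that this remainder tends to $0$ as $N\to\infty$; this is precisely guaranteed by the prescribed growth restriction on $f$ at the parabolic fixed point. The same growth restriction, combined with the decay of the automorphy factors of $ST^{n}$ in $n$, also secures convergence of the resulting series for $\Rea s>0$, where the factors alone would not suffice. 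Passing to the limit converts the finite relation into the infinite-sum relation, i.e.\ exhibits $f$ as a solution of $g=\TO_s^{\fast,\pm}g$. It then remains to verify that this $g$ lies in the Banach space on which $\TO_s^{\fast,\pm}$ acts: real-analyticity of $f$ supplies the local holomorphic extension, while the just-established infinite-sum representation propagates it to the full domain and furnishes the global bound.

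Finally I would check that the two constructions are mutually inverse---immediate once both are written out, since each splits off exactly the term the other sums back in---and that they are linear. The refinement to the signed statement comes from equivariance: the order-two isometry induced by $Q$ commutes with the slow and fast operators, hence with all branch resummations, so both maps intertwine the $Q$-actions and restrict to isomorphisms on the $\pm$-eigenspaces; Theorem~A follows by taking the direct sum of the two signs. The main obstacle, and the step I would allocate the most care to, is the analytic control of the slow-to-fast direction: pinning down the precise growth restriction so that it is simultaneously weak enough that every fast eigenfunction satisfies it and strong enough that the remainder vanishes and that the resummed $f$ lands in the correct holomorphic function space with a uniform bound.
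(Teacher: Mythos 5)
Your high-level strategy is exactly the paper's own heuristic: explicit mutually inverse maps obtained by splitting off the parabolic branch ($f_{-1}=(1-\alpha_s(p))\varphi$) and formally resumming the geometric series ($\varphi=(1+\sum_{n\ge1}\alpha_s(p^n))f_{-1}$), with the growth condition at the parabolic fixed point responsible for killing the obstruction in the slow-to-fast direction. But there is a genuine gap in your treatment of the parameter range $0<\Rea s\le\tfrac12$. You assert that the growth restriction on the slow eigenfunction ``secures convergence of the resulting series for $\Rea s>0$''. This is false: the $n$-th term of $\sum_{n\ge1}\alpha_s(p^n)\psi$ decays only like $n^{-2\Rea s}$ even when $\psi$ is bounded --- and the relevant $\psi=(1-\alpha_s(p))\varphi$ is merely smooth near the fixed point and generally nonvanishing there --- so the series diverges for $\Rea s\le\tfrac12$ no matter what growth is imposed on $\varphi$ (which, in any case, is allowed to blow up like $c/x$). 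In the paper the operator $\TO_{-1,s}^{\fast}$ is defined in that range by meromorphic continuation in $s$, carried out explicitly through the Lerch zeta function, and your ``iterate $N$ times and let the remainder tend to $0$'' argument has no literal meaning there. Consequently the passage to the limit, which is the heart of your slow-to-fast direction, only proves the theorem for $\Rea s>\tfrac12$.

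What replaces it in the paper is the analysis of the $\alpha_s(p)$-invariant obstruction $Q_0=\alpha_s(p)\varphi-\TO^{\fast}_{-1,s}\psi$ (your ``remainder'', reinterpreted for the meromorphically continued operator): using the asymptotic expansion of the Lerch zeta function one shows $Q_0=O(x^{-2s})$, that an $\alpha_s(p)$-invariant function which is $o(x^{-2s})$ must vanish, and --- the delicate point you do not address --- that on the critical line $\Rea s=\tfrac12$ the growth condition still forces $Q_0=0$, which requires a separate oscillation argument comparing $x^{it}$ at $x$ and $x+k\ell$. A second, smaller omission: slow and fast eigenfunctions are a priori defined on incomparable domains (a real interval versus complex neighborhoods), and the paper spends two propositions extending both to a common maximal domain of holomorphy by iterating the finite-term functional equation and exploiting the contraction properties of the words over the alphabet of acting group elements; your claim that the ``infinite-sum representation propagates'' the extension again presupposes a convergence that is unavailable for small $\Rea s$.
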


The same comments as for Theorem~A apply to Theorem~B. In particular, the isomorphism in Theorem~B is explicit and constructive, and certain additional conditions on eigenfunctions can be included. Therefore, even and odd Maass cusp forms can be characterized as certain eigenfunctions of $\TO_s^{\fast,\pm}$, respectively. Again we refer to Theorems~\ref{thm:main_finite}, \ref{thm:main_theta} and \ref{thm:main_nonco} below for precise statements.

Moreover, Theorems~A and B support the conjectures on the significance of the eigenfunctions of $\TO_s^\slow$ in Figure~\ref{fig:program}. In addition, Patterson \cite{Patterson_israel} proposed a cohomological framework for the  divisors of Selberg zeta functions. 
If $\Gamma$ is a lattice and $\chi$ is the trivial one-dimensional representation then---as mentioned above---certain eigenspaces of $\TO_s^\slow$ for the eigenvalue $1$ are isomorphic to parabolic $1$-cohomology spaces, and hence Theorems~A and B support Patterson's conjecture.  We discuss this further in Section~\ref{remarks} below.

In Section~\ref{sec:prelims} below we provide the necessary background on Hecke triangle groups and transfer operators. In Section~\ref{sec:proof} below we prove Theorems~A and B, and in the final Section~\ref{remarks} below we briefly comment on the underlying structure of the isomorphism maps for Theorems~A and B, and the possibility for their generalizations. 

The Appendix is not needed for the understanding of the proofs of Theorems~A and B. It should be seen as background information on part of the motivation. It provides a sketch of a proof of the splitting of the Selberg zeta function according to the splitting of the transfer operators which is not worked out yet in the literature for all combinations $(\Gamma,\chi)$ that we consider throughout this article.

\textit{Acknowledgement.} The authors wish to thank the Centro di Ricerca Matematica Ennio di Giorgi in Pisa for the warm hospitality where part of this work was conducted. The second-named author AP wishes to thank the Max Planck Institute for Mathematics in Bonn for financial support and excellent working conditions during the preparation of this manuscript. Further, AP acknowledges support by the DFG grant PO 1483/2-1. Finally, the authors wish to thank the referee for thorough reading and many comments that improved the exposition.

\section{Preliminaries}\label{sec:prelims}

\subsection{The hyperbolic plane} As a model for the hyperbolic plane we use the upper half plane
\[
 \h \sceq \{ z\in\C \mid \Ima z > 0\}
\]
endowed with the well-known hyperbolic Riemannian metric given by the line element
\[
 ds^2 = \frac{dz d\overline{z}}{(\Ima z)^2}. 
\]
We identify its geodesic boundary with $P^1(\R) \cong \R \cup \{\infty\}$. The action of the group of Riemannian isometries on $\h$ extends continuously to $P^1(\R)$.

This group of isometries is isomorphic to 
\[
G\sceq \PGL_2(\R) = \GL_2(\R)/ (\R^\times\cdot\id),
\]
its subgroup of orientation-preserving Riemannian isometries is 
\[
\PSL_2(\R) = \SL_2(\R)/\{\pm\id\}.
\]
The action of $\PSL_2(\R)$ on $\h\cup P^1(\R)$ is given by fractional linear transformations, i.\,e., for $\textbmat{a}{b}{c}{d}\in \PSL_2(\R)$ and $z\in\h\cup\R$ we have
\[
\bmat{a}{b}{c}{d}.z = 
\begin{cases}
\frac{az+b}{cz+d} & \text{for $cz+d\not=0$}
\\
\infty & \text{for $cz+d=0$} 
\end{cases}
\quad\text{and}\quad
\bmat{a}{b}{c}{d}.\infty = 
\begin{cases}
\frac{a}{c} & \text{for $c\not=0$}
\\
\infty & \text{for $c=0$.}
\end{cases}
\]

\subsection{Hecke triangle groups}

The Hecke triangle group $\Gamma_\ell$ with parameter $\ell>0$ is the subgroup of $\PSL_2(\R)$ generated by the two elements
\begin{equation}\label{generators}
 S\sceq \bmat{0}{1}{-1}{0}\quad\text{and}\quad  T_\ell \sceq \bmat{1}{\ell}{0}{1}.
\end{equation}
It is Fuchsian if and only if $\ell \geq  2$ or $\ell = 2\cos \frac{\pi}{q}$ with $q\in\N_{\geq 3}$. In the following, the expression `Hecke triangle group' always refers to a Fuchsian Hecke triangle group, and we refer to the spaces $X_\ell = \Gamma_\ell\backslash\h$
as \textit{Hecke triangle surfaces}.

The (Fuchsian) Hecke triangle groups form a $1$-parameter subgroup of Fuchsian groups which contains both arithmetic and non-arithmetic groups as well as groups of finite co-area as well as group of infinite co-area. Moreover, it contains the well-studied modular subgroup $\PSL_2(\Z)$ (for $\ell=1$, that is, $q=3$). We provide a few more details about these groups. 

\begin{figure}[h]
\begin{center}
\includegraphics{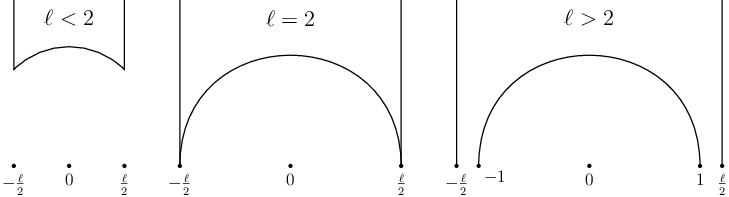}
\end{center}
\caption{Fundamental domain for $\Gamma_\ell$.}
\label{funddoms}
\end{figure}

A fundamental domain for the Hecke triangle group $\Gamma_\ell$ is given by (see Figure~\ref{funddoms})
\[
 \mc F_\ell \sceq \{ z\in\h \mid |z|>1,\ |\Rea z|<\ell/2 \}.
\]
The side-pairings for $\mc F_\ell$ are provided by the generators \eqref{generators}: the vertical sides $\{ \Rea z = -\ell/2\}$ and $\{ \Rea z = \ell/2\}$ are identified via $T_\ell$, and the two bottom sides $\{ |z|=1,\, \Rea z\leq 0\}$ and $\{|z|=1,\, \Rea z\geq 0\}$ are identified via $S$.

Among the Hecke triangle groups those and only those with parameters $\ell\leq 2$ are lattices, i.\,e., are cofinite. For 
\[
\ell = \ell(q) =  2\cos\frac{\pi}{q}
\]
with $q\in\N_{\geq 3}$, the Hecke triangle surface $X_\ell$ has a single cusp (represented by $\infty$) and two elliptic points. In the special case $q=3$, thus $\ell(q)=1$,  the Hecke triangle group $\Gamma_1$ is the modular group $\PSL_2(\Z)$. 

The Hecke triangle group $\Gamma_2$ is commonly known as the Theta group. It is conjugate to the Hecke congruence subgroup $\Gamma_0(2)$, more precisely to its image in $\PGL_2(\R)$. The associated Hecke triangle surface $X_2$ has two cusps (represented by $\infty$ and $\ell/2$) and one elliptic point. 

The Hecke triangle groups $\Gamma_\ell$ with $\ell \in \{ \ell(3), \ell(4), \ell(6), 2\}$ are the only arithmetic ones.  

For $\ell >2$, the groups $\Gamma_{\ell}$ are non-cofinite, and the orbifold $X_\ell$ has one funnel (represented by the subset $[-\ell/2,-1] \cup (1,\ell/2)$ of $\R$), one cusp (represented by $\infty$) and one elliptic point.

\subsection{Associated triangle groups and representations}

Let $\Gamma$ be a Hecke triangle group, and let 
\[
 \wt\Gamma \sceq \left\langle \Gamma, Q\right\rangle,
\]
where
\[
 Q \sceq \bmat{0}{1}{1}{0}.
\]
The group $\wt\Gamma$ is a triangle group (thus, generated by the reflections across the sides of a hyperbolic triangle), and $\Gamma$ is its subgroup of orientation-preserving isometries. Hence $\Gamma$ has index $2$ in $\wt\Gamma$.

Let $X\sceq \Gamma\backslash\h$ denote the associated Hecke triangle surface. Let $\chi$ be a finite-dimensional unitary representation of $\wt\Gamma$ on a complex vector space $V$. We consider $\chi$ to be fixed throughout.

There are many examples for finite-dimensional unitary representations $\chi$ of $\wt\Gamma$. In the following we provide a few rather explicit ones.

\begin{enumerate}[{\rm (i)}]
\item If $\Lambda$ is a subgroup of $\wt\Gamma$ of finite index and $\eta\colon\Lambda\to U(V)$ is a finite-dimensional unitary representation of $\Lambda$ then its induction $\chi = \Ind_{\Lambda}^{\wt\Gamma}\eta$ to $\wt\Gamma$ is a finite-dimensional unitary representation of $\wt\Gamma$. This construction applies in particular if $\eta$ is the trivial one-dimensional representation $\eta\sceq {\bf 1}\colon\Lambda\to \C^\times$ of $\Lambda$. 
\\\\
In addition, the choice $\eta = {\bf 1}$ allows us to understand all arising transfer operators as transfer operators for $\Lambda$ instead of twisted or $\chi$-weighted transfer operators for $\Gamma$. Thus, Theorems~A and B have further interpretations.
\item We can use presentations of $\wt\Gamma$ to construct examples of finite-dimensional unitary representations $\chi\colon\wt\Gamma\to U(V)$. To provide such examples let 
\[
 J\sceq \bmat{-1}{0}{0}{1},\quad W\sceq JT = \bmat{1}{\ell}{0}{-1},\quad U\sceq TS = \bmat{\ell}{-1}{1}{0}.
\]
\begin{enumerate}[(a)]
\item\label{choicea} 
If $\ell>2$ then a presentation of $\wt\Gamma_\ell$ is given by 
\[
 \wt\Gamma_\ell = \left\langle J, Q, W \ \left\vert\  J^2 = Q^2 = W^2 = 1,\ JQ=QJ \right.\right\rangle.
\]
Clearly, $\chi$ is well-defined and completely determined if we prescribe $\chi$ on the elements $J$, $Q$ and $W$ obeying the restrictions
\[
 \chi(J)^2 = \chi(Q)^2 = \chi(W)^2 = \id\quad\text{and}\quad \chi(J)\chi(Q) = \chi(Q)\chi(J).
\]
For example, we can set $\chi(J) = \id$ and pick any elements in $U(V)$ of order $2$ for $\chi(Q)$ and $\chi(W)$. These elements can be chosen non-trivial. E.\,g., if $V=\R^2$ then we can choose 
\[
 \chi(Q) = \mat{2}{-1}{3}{-2}\quad\text{and}\quad \chi(W) = \mat{1}{0}{0}{-1}.
\]
Obviously, many other possibilities for $\chi$ exist.
\item\label{choiceb} 
If $\ell = 2$ then a presentation of $\wt\Gamma_\ell = \wt\Gamma_2$ is given by
\[
 \wt\Gamma_2 = \left\langle S, J, T \ \left\vert\  S^2 = J^2 = (SJ)^2 = (TJ)^2 = 1\right.\right\rangle.
\]
We can construct finite-dimensional non-trivial unitary representations $\chi$ of $\wt\Gamma_2$ as in \eqref{choicea}.
\item\label{choicec} 
Finally, if $\ell =2\cos(\pi/q)$ with $q\in\N_{\geq 3}$ then a presentation of $\wt\Gamma_\ell$ is given by
\[
 \wt\Gamma_\ell = \left\langle S, Q, U \ \left\vert\   S^2 = Q^2 = (UQ)^2 =(QS)^2 =U^q = 1\right.\right\rangle.
\]
We can easily construct finite-dimensional non-trivial unitary representations $\chi\colon\wt\Gamma_\ell\to U(V)$ by setting $\chi(U) = \id$ and then proceeding as in \eqref{choicea}. Of course, also other possibilities exist. For the case $V=\C^2$ and $q=4$ we can, e.\,g., set
\[
 \chi(U) = \mat{0}{i}{i}{0},\quad \chi(Q) = \mat{-1}{0}{0}{1}, \quad \chi(S) = \mat{1}{0}{0}{1}.
\]
\end{enumerate}
\item For Theorem~A only finite-dimensional unitary representations $\chi$ of $\Gamma$ (not necessarily extendable to $\wt\Gamma$) are requested. If $\ell=2\cos(\pi/q)$ with $q\in\N_{\geq 3}$ then a presentation of $\Gamma_\ell$ is given by
\[
 \Gamma_\ell = \left\langle S, U \ \left\vert\  S^2  =U^q = 1\right.\right\rangle.
\]
An example for a non-trivial finite-dimensional representation $\chi\colon\Gamma_\ell\to\nobreak U(V)$ is, e.\,g., given as follows: Let $n\sceq \dim V$. For $j=1,\ldots, n$ pick $a_j\in \{\pm 1\}$ and let $b_j$ be a $q$-th root of unity. Then
\[
 \chi(S) \sceq \diag(a_1,\ldots, a_n)\quad\text{and}\quad \chi(U) \sceq \diag(b_1,\ldots, b_n)
\]
determines a unitary representation which is non-trivial as soon as at least one of the $a_j$ or $b_j$ is not $1$.
\end{enumerate}

\subsection{Automorphic functions, and Selberg zeta functions}\label{sec:SZF}

We say that a function $f\colon\h\to\nobreak\Gamma$ is \textit{$(\Gamma,\chi)$-automorphic} if 
\[
 f(\gamma.z) = \chi(\gamma)f(z)
\]
for all $z\in\h$, $\gamma\in\Gamma$. Let $C^\infty(X;V;\chi)$ be the space of smooth ($C^\infty$) $(\Gamma,\chi)$-automorphic functions $f$ whose restriction $f\vert_{\mc F}$ to some fundamental domain $\mc F$ for $\Gamma$ is bounded, and let $C^\infty_c(X;V;\chi)$ be its subspace of functions $f$ which satisfy that $f\vert_{\mc F}$ is compactly supported. We endow $C^\infty_c(X;V;\chi)$ with the inner product
\begin{equation}\label{innerproduct}
 (f_1,f_2) \sceq \int_{\mc F} \langle f_1(z), f_2(z)\rangle \dvol(z) \qquad \big( f_1,f_2\in C^\infty_c(X;V;\chi) \big)
\end{equation}
where $\langle\cdot,\cdot\rangle$ is the inner product on $V$, and $\dvol$ is the hyperbolic volume form. The representation $\chi$ being unitary yields that the definitions of $C^\infty(X;V;\chi)$, $C^\infty_c(X;V;\chi)$ and the inner product $(\cdot,\cdot)$ defined in \eqref{innerproduct} do not depend on the choice of $\mc F$. Let
\[
 \mc H\sceq L^2(X;V;\chi)
\]
denote the completion of $C^\infty_c(X;V;\chi)$ with respect to $(\cdot,\cdot)$. Then the Laplace-Beltrami operator
\[
 \Delta = - y^2\big(\partial_x^2 + \partial_y^2\big)
\]
on $X$ extends uniquely from 
\[
 \big\{ f\in C^\infty(X;V;\chi) \ \big\vert\ \text{$f$ and $\Delta f$ are bounded on $\mc F$}\big\}
\]
to a self-adjoint nonnegative definite operator on $\mc H$, which we also denote by $\Delta = \Delta(\Gamma;\chi)$. If $f\in\mc H$ is an eigenfunction of $\Delta$, say $\Delta f= \mu f$, we branch its eigenvalue as $\mu=s(1-s)$ and call $s$ its \textit{spectral parameter}.

The eigenfunctions of $\Delta$ in $\mc H$ that decay rapidly towards any cusp of $X$ are called \textit{cusp (vector) forms}. More precisely, for every parabolic element $p\in\Gamma$ let
\[
 V_p \sceq \big\{ v\in V \ \big\vert\ \chi(p)v=v \big\}
\]
be the subspace of $V$ consisting of the vectors fixed by the representation $\chi$ restricted to the subgroup 
\[
 \Gamma_p \sceq \{ p^n \mid n\in\Z \},
\]
and let 
\[
N_p \sceq \{ p^t \mid t\in\R\}
\]
denote the horocycle subgroup associated to $p$, thus, the one-parameter subgroup of $\PSL_2(\R)$ containing $\Gamma_p$. Then $f\in\mc H$ is called a \textit{$(\Gamma,\chi)$-cusp form} if $f$ is an eigenfunction of $\Delta$ and satisfies
\[
 \int_{\Gamma_p\backslash N_p} \langle f(z), v\rangle\, {\rm d} z = 0
\]
for all $v\in V_p$ and all parabolic $p\in\Gamma$. The measure ${\rm d} z$ here refers to the uniform measure on horocycles. 

A cusp form $f$ is called \textit{odd} if $f(-\overline z) = - f(z)$. It is called \textit{even} if $f(-\overline z) = f(z)$. If the representation $\chi$ is the trivial character then cusp forms are called \textit{Maass cusp forms}.

In order to define the Selberg zeta function for $(\Gamma,\chi)$ we recall that an element $g\in\Gamma$ is called ($\Gamma$-)\textit{primitive} if $g=h^n$ for $(h,n)\in\Gamma\times\N$ implies $n=1$ or $g=\id$. For $g\in\Gamma$ let $[g]_\Gamma$ denote its conjugacy class in $\Gamma$. Further let $[\Gamma]_p$ denote the set of all conjugacy classes of primitive hyperbolic elements in $\Gamma$. Finally, for hyperbolic $h\in\Gamma$  let $N(h)$ denote its norm, that is the square of its eigenvalue with the largest absolute value. The $\Gamma$-conjugacy classes of the primitive elements in $\Gamma$ correspond to the primitive (i.\,e., considered to be traced out once; in other words, with minimal period as length) periodic geodesics on the Hecke triangle surface $\Gamma\backslash\h$. The length of the primitive periodic geodesic $\gamma$ associated to $[g]_\Gamma\in [\Gamma]_p$ is $\ell(\gamma) = \log N(g)$.

For $\Rea s \gg 1$, the Selberg zeta function for $(\Gamma,\chi)$ is then defined by
\begin{equation}\label{infprod}
 Z(s) \sceq Z(s,\chi) \sceq \prod_{[h]_\Gamma\in [\Gamma]_p} \prod_{k=0}^\infty \det\left( 1 - \chi(h) N(h)^{-(s+k)}\right).
\end{equation}
More precisely, the abscissa of convergence of this infinite product equals the Hausdorff dimension $\delta\sceq \dim_H\Lambda(\Gamma)$ of the limit set $\Lambda(\Gamma)$ of $\Gamma$. If $\Gamma$ is cofinite then $\delta=1$, for non-cofinite $\Gamma$ we have $\delta<1$ (see, e.\,g.\@ \cite{Selberg, Patterson, Sullivan}). It is well-known that \eqref{infprod} has a meromorphic continuation to all of $\C$.

An element $h\in\wt\Gamma$ is called \textit{hyperbolic} if $h^2\in\Gamma$ is hyperbolic. Suppose that $h\in\wt\Gamma$ is hyperbolic. The norm of $h$ is defined as $N(h)=N(h^2)^{1/2}$. The element $h$ is called ($\wt\Gamma$-)\textit{primitive} if it is not a nontrivial integral power of any hyperbolic element in $\wt\Gamma$. Let $[h]_{\wt\Gamma}$ denote the $\wt\Gamma$-conjugacy class of $h$, and let $[\wt\Gamma]_p$ denote the set of $\wt\Gamma$-conjugacy classes of the $\wt\Gamma$-primitive elements in $\wt\Gamma$. 

For $\Gamma=\Gamma_\ell$ with $\ell\geq 2$ or $\ell=2\cos\tfrac{\pi}{q}$ with $q\in\N_{\geq 3}$ odd, the $\wt\Gamma$-conjugacy classes of \mbox{$\wt\Gamma$-}primitive elements in $\wt\Gamma$ correspond to the primitive periodic billiards on the triangle surface $\wt\Gamma_\ell\backslash\h$. In this case, for $\Rea s \gg 1$, the even ($+$) and odd ($-$) Selberg(-type) zeta functions are defined by
\begin{align*}
 Z_+(s) &\sceq Z_+(s,\chi) \sceq \prod_{[g]_{\wt\Gamma}\in [\wt\Gamma]_p} \prod_{k=0}^\infty \det\left( 1 - \det g^k\cdot \chi(g) N(g)^{-(s+k)}\right)
\intertext{and}
 Z_-(s) &\sceq Z_-(s,\chi) \sceq \prod_{[g]_{\wt\Gamma} \in [\wt\Gamma]_p} \prod_{k=0}^\infty \det\left( 1 - \det g^{k+1}\cdot \chi(g) N(g)^{-(s+k)}\right),
\end{align*}
respectively. The naming will become clear further below.

For $\Gamma=\Gamma_\ell$ with $\ell=2\cos\tfrac\pi{q}$ with $q\in\N_{\geq 3}$ even, the $\wt\Gamma$-conjugacy classes of \mbox{$\wt\Gamma$-}primi\-tive elements in $\wt\Gamma$ is not bijective to the primitive periodic billiards on $\wt\Gamma_\ell\backslash\h$. In fact, let 
\begin{equation}\label{elemmu}
 g_\mu\sceq \frac{1}{\sin\frac{\pi}{q}} \bmat{1}{\cos\frac\pi{q}}{\cos\frac\pi{q}}{1}.
\end{equation}
Then $g_\mu$ and $Qg_\mu=g_\mu Q$ are both $\wt\Gamma$-primitive but they are not $\wt\Gamma$-conjugate. Their $\wt\Gamma$-conjugacy classes $[g_\mu]_{\wt\Gamma}$ and $[Qg_\mu]_{\wt\Gamma}$ are both associated to the primitive periodic billiard on $\wt\Gamma\backslash\h$ that is represented by the geodesic from $-1$ to $1$ on $\h$. This, however, is the only obstacle towards a bijection. Between all other $\wt\Gamma$-conjugacy classes of primitive elements and all other primitive periodic billiards the standard correspondence is valid. In order to state the definition of even and odd Selberg zeta functions for $(\wt\Gamma,\chi)$ let 
\begin{equation}\label{setmu}
 [\wt\Gamma]_{p,\mu}\sceq \left\{ [g]_{\wt\Gamma} \in  [\wt\Gamma]_p \ \left\vert\  [g]_{\wt\Gamma}\not=[g_\mu]_{\wt\Gamma},\ [g]_{\wt\Gamma}\not=[Qg_\mu]_{\wt\Gamma} \vphantom{[g]_{\wt\Gamma} \in  [\wt\Gamma]_p} \right.\right\}.
\end{equation}
For $\Rea s \gg 1$ we define  
\begin{align*}
 Z_+(s) &\sceq Z_+(s,\chi) 
 \\
 & \sceq Z_{\mu,\id}(s) Z_{\mu,Q}(s)\prod_{[g]_{\wt\Gamma}\in [\wt\Gamma]_{p,\mu}} \prod_{k=0}^\infty \det\left( 1 - \det g^{k}\cdot \chi(g) N(g)^{-(s+k)}\right) 
 \intertext{and}
 Z_-(s) &\sceq Z_-(s,\chi) 
 \\
 & \sceq  Z_{\mu,\id}(s) Z_{\mu,Q}(s)^{-1}\prod_{[g]_{\wt\Gamma}\in [\wt\Gamma]_{p,\mu}} \prod_{k=0}^\infty \det\left( 1 - \det g^{k+1}\cdot \chi(g) N(g)^{-(s+k)}\right), 
\end{align*}
where
\begin{align*}
Z_{\mu,\id}(s) & \sceq  \prod_{k=0}^\infty \left( \det\left( 1-\chi(g_\mu)N(g_\mu)^{-(s+2k)}\right)  \det\left( 1-\chi(g_\mu)N(g_\mu)^{-(s+1+2k)} \right)  \right)^{\frac12}
\intertext{and}
Z_{\mu,Q}(s) & \sceq \prod_{k=0}^\infty \left( \det\left( \left( 1 - \chi(g_\mu)N(g_\mu)^{-(s+2k)}\right)^{\chi(Q)}\right) \right)^{\frac12} 
\\
& \quad\hphantom{\left( 1 - \chi(g_\mu)N(g_\mu)\right)}\times \left(  \det\left( \left( 1 - \chi(g_\mu)N(g_\mu)^{-(s+1+2k)}\right)^{\chi(Q)}\right)\right)^{-\frac12}.
\end{align*}
The matrix-matrix exponential in the latter formula is defined by 
\[
 A^B = \exp\big( (\log A) B\big)
\]
with the obvious choices for the matrices $A$ and $B$.

For each Hecke triangle group $\Gamma$, the relation between the $\Gamma$-conjugacy classes of $\Gamma$-primitive hyperbolic elements in $\Gamma$ and the $\wt\Gamma$-conjugacy classes of $\wt\Gamma$-primitive hyperbolic elements in $\wt\Gamma$ yields that 
\begin{equation}\label{zeta_factorization}
 Z = Z_+\cdot Z_-.
\end{equation}
If $\chi$ is the trivial one-dimensional representation then \eqref{zeta_factorization} is shown in \cite[Theorem~6.2]{Pohl_hecke_infinite} for $\Gamma=\Gamma_\ell$ with $\ell>2$. For $\Gamma=\Gamma_\ell$ with $\ell<2$ it follows from the combination of \cite[Theorem~4.12]{Moeller_Pohl} with \cite[Theorems~5.1 and 6.1]{Pohl_spectral_hecke}. The proof for $\Gamma_2$ is analogous to those for $\ell\not=2$. The generalization to arbitrary finite-dimensional unitary representations $\chi$ can be achieved as in \cite{Pohl_representation}. For the convenience of the reader we provide more details in Section~\ref{sec:SZF_proof} below.

All these Selberg zeta functions admit meromorphic continuations to all of $\C$. For various combinations $(\Gamma,\chi)$ it is known that the spectral parameters for $(\Gamma,\chi)$-cusp forms (and more generally, the resonances) are among the zeros of the Selberg zeta function for $(\Gamma,\chi)$. Even more, for some combinations it is also known that the Selberg zeta functions $Z_{\pm}$ encode the splitting of the spectrum into odd ($-$) and even ($+$) parts \cite{Pohl_spectral_hecke} (see also \cite{Venkov_book}). 

\subsection{Actions}\label{sec:actions}

Let $s\in \C$ and $g\in \Gamma$. For any subset $I$ of $\R$, any function $f\colon I\to V$ and $x\in\R$ such that $g.x\in I$ we define
\begin{equation}\label{alpha_action}
 \alpha_s(g^{-1})f(x) \sceq |g'(x)|^s \chi(g^{-1}) f(g.x)
\end{equation}
whenever it makes sense. We remark that $\alpha_s$, as it is defined here, is not an action of $\Gamma$ on some space of functions. However, for the combinations of functions $f$ and elements $g_1,g_2\in\Gamma$ for which we use \eqref{alpha_action}, the functoriality relation $\alpha_s(g_1g_2)f = \alpha_s(g_1)\alpha_s(g_2)f$ is typically satisfied. Therefore, allowing ourselves a slight abuse of concepts, we refer to $\alpha_s$ as `action'.

In order to define a highly regular (continuous respectively holomorphic) continuation of the action by $\alpha_s$ to all of $\wt\Gamma$ and to functions defined on subsets of $\C$ we define the  action of $g=\textbmat{a}{b}{c}{d} \in \wt\Gamma$ on the Riemann sphere $P^1\C$ by fractional linear transformation:
\begin{equation}\label{action_ext}
 g.z \sceq \frac{az+b}{cz+d}.
\end{equation}
In the case of division by $0$ we identify the fraction with $\infty \in P^1\C$. Note that for $g\in \wt\Gamma$ with $g\notin \Gamma$, the map $g$ in \eqref{action_ext} does not define a Riemannian isometry on $\h$. 

We consider the complex plane $\C$ as embedded into $P^1\C$. Using the identification that $-d/c=\infty$ for $c=0$, \eqref{action_ext} defines a holomorphic map $\C\setminus\{-d/c\}\to\C$ (thus, a holomorphic map $\C\to\C$ if $c=0$).

Further, for $x\in\R\setminus\{-d/c\}$ (i.\,e.\@ for all $x\in\R$ in case that $c=0$) we have
\begin{equation}\label{real}
 |g'(x)|^s = \left( |ad-bc|\cdot (cx+d)^{-2}\right)^s = |ad-bc|^s |cx+d|^{-2s}.
\end{equation}

We use the principal branch for the complex logarithm (i.\,e., with the cut plane $\C\setminus (-\infty,0]$). For the holomorphic continuation of \eqref{real} we then have two possibilities depending on whether we extend the first or the second expression. To that end we choose a representative $\textmat{a}{b}{c}{d}$ of $g$ in $\GL_2(\R)$ such that $c\geq 0$. In case that $c=0$ we choose $d>0$.

From the point of view of transfer operators, the first expression is the more natural one. It extends by 
\[
 j_s^{(1)}(g,z) \sceq \left( |ad-bc|\cdot (cz+d)^{-2}\right)^s
\]
holomorphically to 
\[
 C_{(1)} \sceq \{ z\in \C \mid \Rea z > -d/c \}.
\]
For other approaches to and applications of period functions the second expression is sometimes used. It extends by
\[
 j_s^{(2)}(g,z) \sceq |ad-bc|^s (cz+d)^{-2s}
\]
holomorphically to
\[
 C_{(2)}\sceq \C\setminus(-\infty, -d/c].
\]
Obviously, on $C_{(1)}$ both extensions are identical.  For $k\in \{1,2\}$, any subset $W\subseteq C_{(k)}$, any function $f\colon W \to V$ and $z\in\C$ with $g.z\in W$ and such that $j_s^{(k)}(g,z)$ is defined we set
\begin{equation}\label{defalpha12}
 \alpha_s^{(k)}(g^{-1})f(z) \sceq j_s^{(k)}(g,z)\chi(g^{-1}) f(g.z).
\end{equation}
We write just $\alpha_s$ for generic results or if the choice is understood. The statements and proofs of Theorems~A and B do not depend on this choice. It only affects an intermediate result on the maximal domain of holomorphy for certain functions, see Propositions~\ref{prop:slow_extension} and \ref{prop:fast_extension} below.

\subsection{Meromorphic continuations}\label{sec:meromorphic}

Let $h\in \Gamma$ be a parabolic element. For all $s\in\C$ with $\Rea s >\nobreak \tfrac12$, the infinite sum
\begin{equation}\label{gen_op}
 \mc N_s\sceq\sum_{k=1}^\infty \alpha_s(h^k)
\end{equation}
defines an operator between various spaces of functions, for examples see Sections~\ref{sec:fastTO} and \ref{sec:proof} below or \cite{Moeller_Pohl}. Taking advantage of the Lerch zeta function, either in the form 
\[
 \zeta(s,a,w) = \sum_{n=0}^\infty \frac{e^{2\pi i n a}}{((n+w)^2)^{s/2}}
\]
if we use $\alpha_s^{(1)}$ for $\alpha_s$, or in the form
\[
 \zeta(s,a,w) = \sum_{n=0}^\infty \frac{e^{2\pi i n a}}{(n+w)^s}
\]
if we use $\alpha_s^{(2)}$ for $\alpha_s$, and of its meromorphic continuation one deduces that the map
\[
 s\mapsto \mc N_s
\]
extends meromorphically to all of $\C$. All its poles are simple and contained in $\tfrac12-\tfrac12\N_0$. The existence of poles intimately depends on the degree of singularity of the representation $\chi$ (cf.\@ \cite{Pohl_representation}). 

Throughout, for any operator of the form~\eqref{gen_op}, we denote its meromorphic continuation by $\mc N_s$ as well (more precisely, with the same symbol as the inital operator for $\Rea s > \tfrac12$). Further, to simplify notation, we use $\mc N_s$ to denote any operator which acts by \eqref{gen_op}. The specific spaces on which we consider its action are always understood. Finally, whenever we use an expression that involves $\mc N_s$ and `all' $s\in\C$ then it is understood that we exclude the poles.

\subsection{Transfer operators}

Let $F\colon D\to D$ be a discrete dynamical system. The associated transfer operator $\TO_{\varphi,w}$ with potential $\varphi\colon D\to \C$ and weight function $w$ is defined by
\[
 \TO f(x) \sceq \sum_{y\in F^{-1}(x)} w(y) e^{\varphi(y)} f(y),
\]
acting on an appropriate space of functions $f$ (to be adapted to the discrete dynamical system and the applications under consideration).

The transfer operators we consider in this article have been developed in \cite{Moeller_Pohl, Pohl_hecke_infinite, Pohl_spectral_hecke, Pohl_representation}. We survey their common properties that are important for the proofs of Theorems~A and B. We refer to the original articles as well as to the following sections for more details. 

Let $\Gamma$ denote a Hecke triangle group and let $\wt\Gamma \subseteq \PGL_2(\R)$ be its underlying triangle group. The discrete dynamical systems $(D,F)$ that we use in the transfer operator for $\Gamma$ arise from a discretization and symbolic dynamics for the geodesic flow on $X=\Gamma\backslash\h$ (or rather $\wt\Gamma\backslash\h$). The set $D$ is a family of real intervals $D_\kappa$, $\kappa\in K$ for some (finite or countable) index set $K$, and the map $F$ is determined by a family 
\begin{equation}\label{submaps}
 F_k \sceq F\vert_{D_k} \colon D_k \to F_k(D_k)
\end{equation}
of diffeomorphisms that are identical to the action of certain elements in $\wt\Gamma$. The potentials we are interested in are $\varphi_s(y) = -s\log |F'(y)|$ for $s\in\C$. The weight function depends on the finite-dimensional unitary representation $(V,\chi)$ and whether we intend to investigate the odd (`$-$') or the even (`$+$') spectrum of $\Delta=\Delta(\Gamma,\chi)$.

For the parameter $s\in C$, we denote the even transfer operator by $\TO_s^+$ and the odd transfer operator by $\TO_s^-$. Since we consider the representation $(V,\chi)$ to be fixed throughout, we omit it from the notation. 

For a subset $I\subseteq \R$ let  
\[
\Fct(I;V) \sceq \{ f\colon I\to V\}
\]
denote the space of functions $I\to V$. Formally, any arising transfer operator $\TO_s^\pm$ is represented by a matrix
\[
 \TO_s^\pm = \left( \TO_{s,a,b}^\pm \right)_{a,b\in \mc A}
\]
for a finite index set $\mc A$ and acts on function vectors
\[
 f = \left(f_a\right)_{a\in\mc A}
\]
where, for each $a\in \mc A$,
\[
 f_a \in \Fct(I_a;V)
\]
for some interval $I_a\subseteq\R$. The intervals are closely related to the sets $F_k(D_k)$ in \eqref{submaps}. Further, for any $a,b\in \mc A$ there is a (finite or countable) index set $C_{a,b}$ and for each $c\in C_{a,b}$ an element $g_c^{(a,b)} \in \wt\Gamma$ such that
\begin{equation}\label{matrixelement}
 \TO_{s,a,b}^\pm = \sum_{c\in C_{a,b}} w\big(g_c^{(a,b)}\big)\alpha_s\big( g_c^{(a,b)} \big).
\end{equation}
The weight function is given by $w\colon G\to \{\pm 1\}$, 
\[
w(g) \sceq 
\begin{cases}
1 & \text{for even (`$+$') transfer operators}
  \\
\sign(\det(g)) & \text{for odd (`$-$') transfer operators.}
\end{cases}
\]
Recall that the action $\alpha_s$ depends on the representation $\chi$. Moreover, for any $a,b\in\mc A$ and $c\in C_{a,b}$ we have
\[
 \left( g_c^{(a,b)}\right)^{-1}. I_a \subseteq I_b.
\]
While this latter property ensures well-definedness for each single summand in \eqref{matrixelement}, there might be a convergence problem for the potentially infinite sums.

As indicated in Figure~\ref{fig:program}, the discretizations and symbolic dynamics we use here come in pairs: a slow version and a fast version. The fast version is deduced from the slow one by a certain induction process on certain parabolic elements; we refer to \cite{Moeller_Pohl, Pohl_Symdyn2d, Pohl_hecke_infinite, Pohl_spectral_hecke} for details. Therefore, also the odd and even transfer operators come in pairs: the slow odd and even transfer operators $\TO_s^{\slow,\pm}$ for which all index sets $C_{a,b}$ in \eqref{matrixelement} are finite, and the fast odd and even transfer operators which also have infinite terms.

\subsubsection{Slow transfer operators}\label{sec:asdec}

For the odd and even slow transfer operators $\TO_s^{\slow,\pm}$ for Hecke triangle groups $\Gamma$, the index set $\mc A$ consists of a single element only. For this reason we omit it from the notation. The index set $C$ is finite, its precise number of elements depends on $\Gamma$. Thus, the slow transfer operators indeed act on $\Fct(I;V)$. For our applications we consider them to act on the space $C^\omega(I;V)$ of real-analytic functions,  and we are interested in the space (`real-analytic odd/even \textbf{S}low \textbf{E}igen\textbf{F}unctions for the parameter $s$')
\[
 \SFE^{\omega,\pm}_s \sceq \left\{ f\in C^\omega(I;V) \ \left\vert\  \TO_s^{\slow,\pm}f = f \right.\right\},
\]
more precisely, in a certain subspace $\SFE_s^{\omega,\hol,\pm}$ of functions admitting a \textbf{hol}omor\-ph\-ic extension to a large domain, a certain subspace $\SFE_s^{\omega,\as,\pm}$ of functions satisfying certain growth restrictions (certain \textbf{as}ymptotic behavior) as well as a certain subspace $\SFE_s^{\omega,\dec,\pm}$ of functions obeying certain \textbf{dec}ay properties. These properties depend on the specific Hecke triangle group, for which reason we refer to Sections~\ref{sec:iso_finite}-\ref{sec:iso_nonco} below for the definitions. 

\begin{thm}[\cite{Moeller_Pohl, Pohl_spectral_hecke, Pohl_representation}]\label{slow_props}
Let $\Gamma$ be a cofinite Hecke triangle group, $\chi$ be the trivial character, and $\Rea s \in (0,1)$. Then $\SFE^{\omega,\dec,\pm}_s$ is isomorphic to the space of odd (if `$-$') and even (if `$+$') Maass cusp forms with spectral parameter $s$ for $\Gamma$, respectively.
\end{thm}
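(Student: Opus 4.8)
The plan is not to build a map between the two spaces by hand, but to factor the asserted isomorphism through a Lewis--Zagier-type reformulation of the eigenfunction equation together with the cohomological description of Maass cusp forms. First I would unravel the equation $\TO_s^{\slow,\pm}f=f$. Since for a cofinite Hecke triangle group the index set $\mc A$ is a single point and $C$ is finite, this equation is a \emph{finite-term functional equation} for the real-analytic function $f\in C^\omega(I;V)$, assembled from the $\alpha_s$-actions of the generators $S$ and $T_\ell$; for $\PSL_2(\Z)$ it specializes to the Lewis three-term equation. The first step is therefore to rewrite $\SFE^{\omega,\pm}_s$ as the solution space of this functional equation and to read off how the exterior reflection symmetry $Q$ splits it into the even ($+$) and odd ($-$) pieces, matching the corresponding decomposition of cusp forms via $f(-\overline z)=\pm f(z)$.

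Next I would interpret a solution as a parabolic $1$-cocycle. Integrating the generator relations turns a period function $f$ into a cocycle $\gamma\mapsto c_\gamma$ on $\Gamma$ with values in a principal-series module $\mc V_s^{\omega}$ of analytic vectors carrying the $\alpha_s$-action, and conversely one recovers $f$ from such a cocycle by restricting to the intervals of the discretization. The point is that the growth/decay condition cutting out $\SFE^{\omega,\dec,\pm}_s\subseteq\SFE^{\omega,\pm}_s$ should translate exactly into the \emph{parabolic} normalization at the cusp of $X$, thereby realizing $\SFE^{\omega,\dec,\pm}_s$ inside the parabolic cohomology $H^1_{\parab}(\Gamma;\mc V_s^{\omega})$.

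The analytic core is then supplied by the cohomological characterization of Maass cusp forms of Bruggeman--Lewis--Zagier \cite{BLZ_part2}: a cusp form $u$ with $\Delta u=s(1-s)u$ yields a parabolic cocycle by integrating $u$ against a Poisson/Green-type kernel along paths $z_0\to\gamma z_0$, and this assignment is an isomorphism onto $H^1_{\parab}(\Gamma;\mc V_s^{\omega})$; inverting it reconstructs $u$ via the Poisson transform from the boundary germ encoded by $f$. Composing the cocycle-to-period-function correspondence with this isomorphism produces the desired explicit, constructive isomorphism $\SFE^{\omega,\dec,\pm}_s\cong\MCF_s^{\pm}$, as carried out for Hecke triangle groups in \cite{Pohl_mcf_general, Lewis_Zagier}.

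I expect the main obstacle to be the \emph{matching of function-space conditions} across these bridges. One must verify that the real-analyticity built into $C^\omega(I;V)$ and the precise decay defining $\SFE^{\omega,\dec,\pm}_s$ correspond---neither too strongly nor too weakly---to the rapid decay (cuspidality) of $u$ and to the analytic-vector condition on the cocycle, so that the cohomological isomorphism restricts to the cuspidal subspace without losing or spuriously gaining solutions at the cusp. Controlling the behaviour at the cusp and at the elliptic fixed points of the Hecke triangle surface, and checking convergence and holomorphy of the kernel integrals there (for which the extension results on maximal domains of holomorphy stated below are the relevant tools), is where the technical weight of the argument lies.
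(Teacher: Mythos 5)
This theorem is not proved in the paper itself: it is imported from \cite{Moeller_Pohl, Pohl_spectral_hecke, Pohl_representation}, and the introduction describes its proof as passing through the characterization of Maass cusp forms in parabolic $1$-cohomology from \cite{BLZ_part2}. Your proposed route --- period functions $\to$ parabolic cocycles in an analytic principal-series module $\to$ Maass cusp forms via the Bruggeman--Lewis--Zagier isomorphism, with the decay condition matching the parabolic/cuspidal normalization --- is essentially the same strategy as the cited proof.
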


If $\Gamma$ is a non-cofinite Hecke triangle groups or $\chi$ not the trivial character or $s\in\C$ lies outside the domain $\{ 0<\Rea s<1\}$ then the spectral interpretation of the sets $\SFE^{\omega,\dec,\pm}_s$ is not yet understood. However, well-supported conjectures exist. We refer to Section~\ref{remarks} for a more detailed discussion.

\subsubsection{Fast transfer operators}\label{sec:fastTO}

For any fast transfer operator, at least one of the index sets $C_{a,b}$ in \eqref{matrixelement} is infinite and hence causes a convergence problem. However, the structure of the infinite sums is controlled and allows for a uniform treatment.

The purpose of the fast transfer operators is to represent Selberg zeta functions as Fredholm determinants. To that end we use a certain Banach space (defined further below) on which the fast transfer operator acts as a nuclear operator of order $0$. This Banach space essentially is the space of function vectors $(f_a)_{a\in\mc A}$ such that each function $f_a$ is real-analytic on $I_a$, extends continuously to $\overline I_a$, extends holomorphically to a complex neighborhood $\mc E_a$ common for all functions $f_a$, and the family of complex neighborhoods $(\mc E_a)_{a\in\mc A}$ is compatible with the mapping properties of the transfer operator. 

To be more precise, let $(\mc E_a)_{a\in\mc A}$ be a family of open connected subsets of the Riemann sphere $\wh\C = \C\cup\{\infty\}$ such that 
\begin{enumerate}[(A)]
\item\label{bigi} for each $a\in\mc A$, the set $\mc E_a$ is a complex neighborhood  (in the Riemann sphere) of the closure $\overline I_a$ of the real interval $I_a$, and
\item\label{bigii} for all $a,b\in \mc A$ and all $c\in C_{a,b}$ we have
\[
 \left( g_c^{(a,b)} \right)^{-1}. \overline{\mc E_a} \subseteq \mc E_b.
\]
\end{enumerate}
Define
\[
 B(\mc E_a) \sceq \{ \text{$\psi\colon \overline{\mc E_a} \to V$ continuous} \mid \text{$\psi\vert_{\mc E_a}$ holomorphic} \}.
\]
Endowed with the supremum norm, $B(\mc E_a)$ is a Banach space. Let
\[
 B(\mc E) \sceq \bigoplus_{a\in \mc A} B(\mc E_a)
\]
be the direct sum of these Banach spaces. As stated in Theorem~\ref{fast_props} below, for $\Rea s>\tfrac12$, each of the fast transfer operators $\TO_s^{\fast,\pm}$ acts on $B(\mc E)$.

The role of the family $(\mc E_a)_{a\in\mc A}$ is to provide a thickening into the complex plane of the family $(\overline I_a)_{a\in\mc A}$  of real intervals and to fix a common domain of holomorphy of the considered function vectors. This thickening is needed in the proof of Theorem~\ref{fast_props} below, in particular for  Grothendieck's theory of nuclear operators on Banach spaces, see \cite{Pohl_hecke_infinite, Pohl_spectral_hecke, Pohl_representation}. However, none of the results in this paper depends on the specific choice of the family $(\mc E_a)_{a\in\mc A}$. Thus it would be natural to consider the inductive limit of the operators $\TO_s^{\fast,\pm}\colon B(\mc E)\to B(\mc E)$, where the system is directed by shrinking domains (thus, if $\mc E = (\mc E_a)_{a\in\mc A}$ and $\mc E'=(\mc E'_a)_{a\in\mc A}$ then 
\[
 \left( \TO_s^{\fast,\pm}\colon B(\mc E) \to B(\mc E) \right) \ \preccurlyeq\  \left( \TO_s^{\fast,\pm}\colon B(\mc E') \to B(\mc E')  \right)
\]
if and only if $\mc E'_a \subseteq \mc E_a$ for all $a\in \mc A$). We omit here a further discussion of this limit and its topological properties, and  
work with a fixed family $\mc E = (\mc E_a)_{a\in\mc A}$. To emphasize the independence of all results from this choice we use also the suggestive notation 
\[
 \mc B \sceq \mc B(I) \sceq B(\mc E)
\]
in order to stress that the family $I=(I_a)_{a\in\mc A}$ is the essential structure and the family of complex neighborhoods a rather auxiliary object.

\begin{thm}[\cite{Pohl_hecke_infinite, Pohl_spectral_hecke, Pohl_representation}] \label{fast_props}
\begin{enumerate}[{\rm (i)}]
\item For $\Rea s > \tfrac12$, each transfer operator $\TO_s^{\fast,\pm}$ acts on $\mc B$ as a nuclear operator of order $0$. 
\item The map $s\mapsto \TO_s^{\fast,\pm}$ extends to a meromorphic function on $\C$ with values in nuclear operators of order $0$ on $\mc B$. The possible poles are all simple and contained in $\tfrac12(1-\N_0)$.
\item The Selberg zeta function $Z$ for $(\Gamma,\chi)$ equals the Fredholm determinant
\[
 Z(s) = \det\left( 1 - \TO_s^{\fast,+}\right)\det\left(1 -\TO_s^{\fast,-}\right).
\]
\item If $\Gamma$ is a lattice with a single cusp and $\chi$ is the trivial one-dimensional representation then $\det(1-\TO_s^{\fast,\pm})$ equals the Selberg-type zeta function $Z_\pm$ for the odd (if `$-$') and the even (if `$+$') spectrum, respectively:
\[
 Z_\pm(s) = \det\left(1-\TO_s^{\fast,\pm}\right).
\]
\end{enumerate}
\end{thm}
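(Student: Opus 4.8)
The plan is to read off all four assertions from the explicit matrix description \eqref{matrixelement}, in which $\TO_s^{\fast,\pm}$ is a finite matrix whose entries $\TO_{s,a,b}^{\fast,\pm}$ are finite or countable sums $\sum_{c} w(g_c^{(a,b)})\,\alpha_s(g_c^{(a,b)})$ of weighted composition operators indexed by elements of $\wt\Gamma$. For (i), the core point is that each single summand acts from $\mc B(I_b)$ into $\mc B(I_a)$ as the composition operator with substitution $z\mapsto (g_c^{(a,b)})^{-1}.z$, premultiplied by the bounded holomorphic factor $j_s\big((g_c^{(a,b)})^{-1},\cdot\big)$ and the fixed linear map $\chi(g_c^{(a,b)})$. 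Because of the strict containment $(g_c^{(a,b)})^{-1}.\overline{\mc E_a}\subseteq\mc E_b$, the substitution maps the closure of one domain into the interior of another; expanding by the Cauchy integral formula on the larger domain and restricting shows that its approximation numbers decay geometrically, which in particular yields nuclearity of order $0$. Multiplying by the bounded multiplier and by $\chi(g_c^{(a,b)})$ preserves this. The only countable sums arise from the parabolic induction, i.e.\ from operators $\mc N_s=\sum_{k\ge1}\alpha_s(h^k)$ with $h$ parabolic; since $(h^k)'$ has size $O(k^{-2})$ on the relevant domain, the nuclear norms of the summands are $O(k^{-2\Rea s})$, so the series converges in nuclear norm exactly for $\Rea s>\tfrac12$. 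A finite matrix of nuclear-order-$0$ operators is again nuclear of order $0$.

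For (ii), each finite summand $\alpha_s(g)$ is entire in $s$, the multiplier $j_s(g,\cdot)=(\cdots)^s$ being entire. The sole source of non-entireness is the parabolic sum $\mc N_s$, whose meromorphic continuation I would import from Section~\ref{sec:meromorphic}: rewriting the series through the Lerch zeta function and invoking its meromorphic continuation furnishes a meromorphic operator family with at most simple poles in $\tfrac12-\tfrac12\N_0=\tfrac12(1-\N_0)$. One checks that this continuation respects the nuclear structure, so $s\mapsto\TO_s^{\fast,\pm}$ extends meromorphically with values in nuclear operators of order $0$ and the asserted pole set.

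The heart of the theorem is (iii). Nuclearity of order $0$ makes the Fredholm determinant well defined and gives
\[
 \det\left(1-\TO_s^{\fast,\pm}\right)=\exp\!\left(-\sum_{n\ge1}\tfrac{1}{n}\,\tr\big((\TO_s^{\fast,\pm})^n\big)\right).
\]
I would evaluate the traces of the iterates by the Atiyah--Bott/Lefschetz fixed-point formula for holomorphic weighted composition operators: $\tr\big((\TO_s^{\fast,\pm})^n\big)$ is a sum over the fixed points $z_0$ of the substitutions $g$ produced by admissible length-$n$ words, each contributing $w(g)\,\chi(g)\,j_s(g,z_0)/(1-g'(z_0))$. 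The correctness of the symbolic dynamics (from \cite{Pohl_Symdyn2d,Moeller_Pohl,Pohl_spectral_hecke,Pohl_hecke_infinite}) identifies these fixed points with the periodic orbits of the geodesic flow and hence with hyperbolic conjugacy classes in $\wt\Gamma$; at such a fixed point the attracting multiplier is $g'(z_0)=N(g)^{-1}$ and $j_s(g,z_0)=N(g)^{-s}$. Expanding $1/(1-N(g)^{-1})$ geometrically, summing over $n$ with the period bookkeeping that converts the orbit sum into a sum over primitive classes, and exponentiating, reproduces the Euler product defining $Z$. This proves the identity for $\Rea s$ large, and (ii) propagates it to all of $\C$. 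Assertion (iv) is the same computation with the sign weight $w(g)=\sign(\det g)$ retained: tracking the factors $\det g^{k}$ (resp.\ $\det g^{k+1}$) that the weight inserts into the Euler factors organizes $\det(1-\TO_s^{\fast,+})$ and $\det(1-\TO_s^{\fast,-})$ into the $\wt\Gamma$-conjugacy-class products defining $Z_+$ and $Z_-$, which under the stated hypotheses ($\Gamma$ a lattice with one cusp, $\chi$ trivial) are precisely the even/odd Selberg-type zeta functions.

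I expect the main obstacle to be the trace computation in (iii)--(iv): establishing the Atiyah--Bott formula rigorously for these \emph{infinite-term} nuclear operators, i.e.\ justifying the interchange of the fixed-point sum with the infinite parabolic sums and with the summation over $n$, and -- above all -- verifying that the symbolic dynamics codes each hyperbolic $\wt\Gamma$-conjugacy class exactly once and with the correct multiplier $N(h)^{-s}\chi(h)$, so that the combinatorial orbit count matches the Euler product of $Z$ (and, with the sign weight, of $Z_\pm$) without over- or undercounting.
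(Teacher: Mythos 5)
This theorem is not proved in the paper at all: it is imported verbatim from \cite{Pohl_spectral_hecke, Pohl_hecke_infinite, Pohl_representation} as background (note the citation in the theorem header), so there is no in-paper argument to compare yours against. That said, your outline is a faithful sketch of the standard route taken in those references: nuclearity of order $0$ for weighted composition operators via the strict nesting $(g_c^{(a,b)})^{-1}.\overline{\mc E_a}\subseteq\mc E_b$, meromorphic continuation of the parabolic sums through the Lerch zeta function exactly as in Section~\ref{sec:meromorphic}, and the Fredholm determinant identity via the Lefschetz-type trace formula for the iterates combined with the bijection between periodic words of the symbolic dynamics and (primitive) hyperbolic conjugacy classes, with the sign weight $w(g)=\sign(\det g)$ separating $Z_+$ from $Z_-$. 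One caveat worth flagging in step (i): the bound $O(k^{-2})$ for the derivatives of $h^k$ is \emph{not} uniform on a neighborhood of the parabolic fixed point (e.g.\ $0\in\overline{D_{-1}}$, where $(g_{-1}^{-k})'(0)=1$ for all $k$), so the nuclear-norm convergence of $\sum_k\alpha_s(h^k)$ for $\Rea s>\tfrac12$ relies on the matrix structure: the bare parabolic sum only ever maps into a Banach space $B(\mc E_b)$ whose domain is bounded away from that fixed point, or else appears post-composed with $\alpha_s(Q)$ (yielding Mayer-type terms $(n\ell+z)^{-2s}$), and your sketch should make this explicit. The remaining obstacles you name (interchanging the fixed-point sum with the infinite parabolic sums, and verifying the exact once-and-only-once coding of conjugacy classes) are indeed where the real work in the cited proofs lies.
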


For $s\in\C$ we define (`odd/even \textbf{F}ast \textbf{E}igen\textbf{F}unctions for the parameter $s$')
\[
 \FFE_s^\pm \sceq \left\{ f\in \mc B \ \left\vert\ f=\TO_s^{\fast,\pm}f \right.\right\}.
\]
The elements of $\FFE_s^\pm$ determine the zeros of $Z_\pm$, respectively, and hence by \eqref{zeta_factorization} those of $Z$.

\subsection{Notation} For any $x_0\in \R\cup\{\pm \infty\}$ and any functions $f,g\colon \R\to\C$ we use $f(x) = O_{x\to x_0^+}(g(x))$ for   
\[
\limsup_{x\searrow x_0} \left|\frac{f(x)}{g(x)}\right| < \infty.
\]
Note that, in contrast to other conventions, we allow (for simplicity) that $g$ does not need to be positive. We use analogous conventions for the other symbols from the $O$-notation. 

Further, for functions $f,g\colon D\to\C$ with $D\subseteq \C$ we use $f\ll g$ if there exists $C>0$ such that for all $x\in D$ we have
\[
 |f(x)| \leq C |g(x)|.
\]
Moreover, we say that $f$ satisfies a property $P$ for all $|x|\gg 1$ if there exists $C\geq 0$ such that for all $|x|\geq C$, $f(x)$ satisfies $P$.

\section{Proof of Theorems~A and B}\label{sec:proof}

We show Theorem~B separately for the cofinite Hecke triangle groups with a single cusp, the Theta group, and the non-cofinite Hecke triangle groups. Within these classes, the structure of the groups and transfer operators allows for an easy uniform statement of the maps that provide the claimed isomorphism between the eigenspaces of the slow and fast transfer operators. 

Recall that 
\[
Q= \bmat{0}{1}{1}{0}\quad\text{and}\quad J= \bmat{-1}{0}{0}{1}.
\]

\subsection{Isomorphism for the Hecke triangle groups $\Gamma_\ell$ with 
$\ell=2\cos(\pi/q)$, $q\geq 3$ odd}\label{sec:iso_finite}

Let $q\in\N_{\geq 3}$ and set
\[
 \ell\sceq\ell(q) \sceq 2\cos\frac{\pi}{q}.
\]
For the cofinite Hecke triangle group 
\[
 \Gamma \sceq \Gamma_q \sceq \Gamma_\ell
\]
with a single cusp we consider the transfer operators developed in 
\cite{Moeller_Pohl, Pohl_spectral_hecke, Pohl_representation}. We recall their definitions and major properties.

To that end recall that $S = 
\textbmat{0}{1}{-1}{0}$ and $T \sceq T_q \sceq T_\ell = 
\textbmat{1}{\ell}{0}{1}$. For $k\in \Z$ let
\begin{equation}\label{defgqk}
 g_{q,k} \sceq \left( \big(T_qS)^k S\right)^{-1},
\end{equation}
and, for $m\in\Z$, set
\[
 \s(m,q) \sceq \frac{\sin\left(\frac{m}{q}\pi\right)}{\sin\frac{\pi}{q}}.
\]
Then we have
\[
 g_{q,k}^{-1} = \bmat{\s(k,q)}{\s(k+1,q)}{\s(k-1,q)}{\s(k,q)}.
\]
Thus, $g_{q,k}^{-1}$ is $q$-periodic in the variable $k$. The elements 
\begin{equation}\label{parab_explicit}
 g_{q,1}^{-1} = g_{q,nq-1}^{-1} = \bmat{1}{\ell}{0}{1},\quad g_{q,-1}^{-1} = g_{q,nq+1}^{-1} = \bmat{1}{0}{\ell}{1}\qquad (n\in\Z)
\end{equation}
are parabolic, the elements 
\[
g_{q,0}^{-1} = g_{q,nq}^{-1} = \id \qquad (n\in\Z)
\]
are the identity element (and will not play any role in the following). All the remaining elements are hyperbolic.

Let 
\[
 m\sceq \left\lfloor \frac{q-1}{2} \right\rfloor.
\]

In this section we consider the case of $q$ odd. The case for even $q$ is essentially identical (treated in Section~\ref{sec:iso_even} below), the only difference is the explicit formula for the transfer operators. Thus, let $q$ be odd. Then
\[
 m=\frac{q-1}{2}.
\]
To simplify notation, we omit throughout the subscripts $q$ and $\ell$.

\subsubsection{Slow transfer operators for odd $q$} The odd (`$-$') and even (`$+$') slow transfer operator $\TO^{\slow, \pm}_{s}$ of 
$\Gamma$ is given by
\begin{align*}
\TO^{\slow, \pm}_{s} & =  \sum_{k=1}^{m} \alpha_s(g_{-k}) \pm 
\alpha_s(Qg_{-k}) 
 \\
& =  \big( 1 \pm \alpha_s(Q)\big)  \sum_{k=1}^{m} \alpha_s(g_{-k}),
\end{align*}
acting on $C^\omega( (0,1);V)$. Let
\[
\SFE_{s}^{\omega,\pm}\sceq \left\{ \varphi\in C^\omega\big( (0,1); V\big) \ 
\left\vert\ \varphi = \TO_{s}^{\slow,\pm}\varphi\right.\right\}
\]
denote the space of real-analytic bounded eigenfunctions of 
$\TO_{s}^{\slow,\pm}$, respectively, with eigenvalue $1$.

For $z\in\C$, $r>0$ let 
\[
 B_r(z) \sceq \{ w\in \C \mid |w-z|<r\}
\]
denote the open ball in $\C$ around $z$ with radius $r$. Let $I=(a,b)\subseteq\R$ be a finite interval, $\mc U$ be a complex neighborhood of $I$ and $\eps>0$. We say that $\mc U$ is an \textit{$\eps$-rounded neighborhood} if there exists $\eps>0$ such that 
\[
 B_\eps(a+\eps)\cup B_\eps(b-\eps)\cup \big( (a+\eps,b-\eps)+i(-\eps,\eps) \big) \subseteq\mc U.
\]
We call $\mc U$ \textit{$\eps$-rounded at $a$} if 
\[
 B_\eps(a+\eps)\subseteq\mc U,
\]
and, analogously, that $\mc U$ is \textit{$\eps$-rounded at $b$} if
\[
 B_\eps(b-\eps)\subseteq\mc U.
\]
We say $\mc U$ is a \textit{rounded neighborhood} if there exists $\eps>0$ such that $\mc U$ is $\eps$-rounded. Analogously we define the notions \textit{rounded at} $a$ or $b$.

Let $\SFE_{s}^{\omega,\hol,\pm}$ denote the space of functions $\varphi\in\SFE_{s}^{\omega,\pm}$ for which there exists a complex neighborhood $\mc U = \mc U(\varphi)$ of $(0,1)$ that is rounded at $0$ and to which $\varphi$ has a holomorphic extension $\wt\varphi$ that satisfies the functional equation
\begin{equation}\label{slow_fe}
\wt\varphi =\sum_{k=1}^m \big( \alpha_s(g_{-k}) \pm \alpha_s(Qg_{-k}) \big)\wt\varphi.
\end{equation}

Let
\begin{equation}\label{def_sfe}
 \SFE_{s}^{\omega,\as,\pm}\sceq \left\{ \varphi\in \SFE_{s}^{\omega,\hol,\pm} \ 
\left\vert\ \exists\, c\in V\colon \varphi(x) = \frac{c}{x} + O_{x\to0^+}(1) 
\right.\right\}
\end{equation}
denote its subspace of functions with a certain controlled growth towards $0$, 
and let $\SFE_{s}^{\omega,\dec,\pm}$ denote its subspace of functions 
$\varphi\in \SFE_{s}^{\omega,\pm}$ for which the map
\begin{equation}\label{phi_decay}
\begin{cases}
\varphi & \text{on $\left(0,\tfrac1{\ell}\right)$}
\\[2mm]
\mp\alpha_s(J)\varphi & \text{on $\left(-\tfrac1{\ell},0\right)$}
\end{cases}
\end{equation}
extends smoothly ($C^\infty$) to $\big(-1/\ell,1/\ell\big)$. As already indicated in Section~\ref{sec:asdec}, the superscript `$\as$' abbreviates `asymptotic behavior', refering to the growth towards $0$. The superscript `$\dec$' abbreviates `decay', refering to the necessary decay behavior of $\varphi$ in order to satisfy \eqref{phi_decay}.

\begin{lemma}\label{lem:nhbs}
Let $\varphi\in \SFE_{s}^{\omega,\hol,\pm}$. Then $\varphi$ extends holomorphically to a rounded neighborhood $\mc W$ of $(0,1)$ and its extension satisfies \eqref{slow_fe} on all of $\mc W$.
\end{lemma}

\begin{proof}
By hypothesis, we find a complex neighborhood $\mc U$ of $(0,1)$ that is rounded at $0$, to which $\varphi$ extends holomorphically and on which this extension, also denoted by $\varphi$, satisfies \eqref{slow_fe}. Since $g_{-1}^{-1}$ is parabolic with fixed point $0$, $g_{-2}^{-1},\ldots, g_{-m}^{-1}$ are hyperbolic with attracting fixed points contained in the interval $(0,1)$, all repelling fixed points are bounded away from $(0,1)$, and $Q$ fixes $1$, we find a complex neighborhood $\mc V$ of $1$ such that $Q.\mc V\subseteq \mc V$ and
\[
 g_{-k}^{-1}.\mc V \subseteq \mc U\quad \text{for $k=1,\ldots, m$.}
\]
Then 
\[
 \psi\sceq \sum_{k=1}^m \big( \alpha_s(g_{-k}) \pm \alpha_s(Qg_{-k}) \big)\varphi
\]
defines a holomorphic function on $\mc W\sceq \mc U\cup\mc V$. Further, $\psi$ coincides with $\varphi$ on $\mc U$ since $\varphi$ satisfies \eqref{slow_fe} on all of $\mc U$. By the identity theorem of holomorphic functions, $\psi$ satisfies \eqref{slow_fe} on all of $\mc W$. Obviously, $\mc W$ is a rounded neighborhood of $(0,1)$.
\end{proof}

\begin{remark}\label{rem:stronger}
In Corollary~\ref{cor:vanish} below we will see that the elements of $\SFE_s^{\omega,\as,\pm}$ satisfy  stronger asymptotics than requested in \eqref{def_sfe} towards the cusp of $X_\ell$ in all directions that are `closed' by the representation $\chi$. To be more precise let
\[
 E_1\sceq \{ v\in V \mid \chi(g_{-1})v=v \},
\]
let $E_r$ be the orthogonal complement of $E_1$ in $V$, and let
\[
 \pr_r\colon V\to E_r
\]
be the orthogonal projection on $E_r$. Then every $\varphi\in \SFE_s^{\omega,\as,\pm}$ satisfies
\[
 \varphi(x) = \frac{c}{x} + O_{x\to 0^+}(1)
\]
for some $c\in V$ with $\pr_r(c) = 0$, at least if $s\in\C$, $\Rea s > 0$, $s\not=1/2$.

The property $\pr_r(c)=0$ means that in all directions of the cusp that are not stabilized by $\chi$, the function $\varphi$ behaves as if the space is closed. 
\end{remark}

\begin{remark}\label{rem:bdd}
For each $\varphi\in \SFE_{s}^{\omega,\dec,+}$ the condition~\eqref{phi_decay} 
implies that we have 
\[
\lim_{x\to0^+}\varphi(x) = 0.
\]
Even more, since the limit $\lim_{x\to0^+}\varphi'(x)$ exists, 
\[
 \varphi = O_{x\to0^+}(x).
\]
\end{remark}

\begin{remark}\label{otherdomain}
In \cite{Moeller_Pohl, Pohl_spectral_hecke} (isomorphism between Maass cusp forms and eigenfunctions of transfer operators) we consider $\TO_{s}^{\slow,\pm}$ 
to act on $C^\omega( \R_{>0};V)$ instead of on $C^\omega( (0,1);V)$ and require 
that
\begin{equation}\label{phi_decay2}
\begin{cases}
\varphi & \text{on $\R_{>0}$}
\\
-\alpha_s(S)\varphi & \text{on $\R_{<0}$}
\end{cases}
\end{equation}
extends smoothly to $\R$ instead of asking for \eqref{phi_decay}. However, if 
$\varphi\in C^\omega(\R_{>0};V)$ is an eigenfunction with eigenvalue $1$ of 
$\TO_{s}^{\slow,\pm}$ then $\varphi = \pm\alpha_s(Q)\varphi$. Substituting 
this into \eqref{phi_decay2} and noting that $SQ=J$ shows that 
\eqref{phi_decay2} is equivalent to \eqref{phi_decay} up to real-analyticity at 
$1$. However, Proposition~\ref{prop:slow_extension} below shows that each 
element of $\SFE_{s}^{\omega,\pm}$ extends uniquely to an element in 
$C^\omega(\R_{>0};V)$. Thus, \eqref{phi_decay} and \eqref{phi_decay2} are indeed 
equivalent.
\end{remark}

\subsubsection{Fast transfer operators for odd $q$} In order to state the fast odd (`$-$') and even (`$+$') transfer operator 
$\TO^{\fast,\pm}_{s}$ of $\Gamma$ we set 
\begin{equation}\label{def_sets}
D_{-1} \sceq \left(0,\tfrac1{\ell}\right)\quad\text{and}\quad D_{0}\sceq 
\left(\tfrac1{\ell},1\right)
\end{equation}
as well as
\[
 \TO_{0,s}^\fast  \sceq  \sum_{k=2}^{m} \alpha_s( g_{-k} ).
\]
For $\Rea s > \tfrac12$ we set
\begin{equation}\label{TOm1_def}
 \TO_{-1,s}^\fast \sceq \sum_{n=1}^\infty \alpha_s( g_{-1}^n ),
\end{equation}
and have
\[
\TO_{s}^{\fast,\pm} = 
\begin{pmatrix}
\big(1\pm \alpha_s(Q)\big) \TO_{0,s}^{\fast} & \big(1\pm \alpha_s(Q)\big) 
\TO_{-1,s}^{\fast}
\\[2mm]
\big(1\pm \alpha_s(Q)\big) \TO_{0,s}^{\fast} & \pm \alpha_s(Q) 
\TO_{-1,s}^{\fast}
\end{pmatrix}
\]
which acts on the Banach space
\[
 \mc B \sceq \mc B(D_{0}) \oplus \mc B(D_{-1}).
\]
For $\Rea s \leq \tfrac12$, $\TO_{-1,s}^\fast$ and $\TO_{s}^{\fast,\pm}$ are 
given by meromorphic continuation (see Theorem~\ref{fast_props} or 
\cite{Moeller_Pohl, Pohl_representation}).

The choice of notation in \eqref{def_sets} refers to the fact that $g_{-1}^{-1}$ maps to $D_{-1}$, and all the other elements $g_{-2}^{-1},\ldots,g_{-m}^{-1}$ map to $D_0$.

For $s\in \C$ let 
\[
 \FFE_{s}^\pm \sceq \left\{ f \in \mc B \ \left\vert\ f = 
\TO_{s}^{\fast,\pm} f \right.\right\}
\]
denote the space of eigenfunctions in $\mc B$ of $\TO_{s}^{\fast,\pm}$ with 
eigenvalue $1$. Let 
$\FFE_{s}^{\dec,\pm}$ denote the subspace of maps $f=(f_{0}, f_{-1})^\top \in 
\FFE_{s}^\pm$ for which the map
\begin{equation}\label{f_decay}
\begin{cases}
\left(1+\TO_{-1,s}^{\fast}\right)f_{-1} & \text{for $x>0$}
\\[1mm]
\mp \alpha_s(J)\left(1+\TO_{-1,s}^{\fast}\right)f_{-1} & \text{for $x<0$}
\end{cases}
\end{equation}
extends smoothly to $0$ when considered as a function on some punctured 
neighborhood of $0$ in $\R$.

\subsubsection{Special case $q=3$}

For $q=3$, i.\,e., for the modular group $\PSL_2(\Z)$, the set $D_0$ 
is empty and hence there is no $f_0$-component. The transfer operators simplify to 
\[
 \TO^{\slow,\pm}_{3,s} = \big(1\pm\alpha_s(Q)\big)\circ \alpha_s(g_{3,-1})
\]
and
\[
 \TO^{\fast,\pm}_{3,s} = \pm\alpha_s(Q)\TO^\fast_{3,-1,s},
\]
which, for $\Rea s > 1/2$, is
\begin{equation}\label{baby3}
 \TO^{\fast,\pm}_{3,s} = \pm\alpha_s(Q)\sum_{n=1}^\infty \alpha_s(g_{3,-1}^n).
\end{equation}
For the case that $\chi$ is the trivial character, \eqref{baby3} coincides with $\pm\TO_s^{\text{\rm Mayer}}$, see \eqref{TOMayer}. It is does not coincide with any other transfer operator existing for $\PSL_2(\Z)$. For a more detailed discussion we refer to \cite[Remark~4.3]{Pohl_mcf_general}.

For $\chi$ being the trivial character, \cite{Lewis_Zagier} and \cite{Chang_Mayer_transop} showed that the map
\begin{equation}\label{iso_modular}
 f_{-1} = \alpha_s(g_{3,1})\varphi,\quad \varphi=\alpha_s(g_{3,1}^{-1})f_{-1}
\end{equation}
provides an isomorphism between the eigenfunctions of $\TO^{\slow,\pm}_{3,s}$ and $\TO^{\fast,\pm}_{3,s}$. To be more precise, at the time of their results, the slow transfer operator had not yet been discovered. They showed an isomorphism between the eigenfunctions with eigenvalue $1$ of $\TO^{\fast,\pm}_{3,s}$ and the solutions (of appropriate regularity) of the functional equation
\[
 \varphi(x) = \varphi(x+1) + (x+1)^{-2s} \varphi\left( \frac{x}{x+1}\right), \quad x\in\R_{>0}
\]
that are invariant (`$+$') respectively anti-invariant (`$-$') under the action of $Q$. In our terms these functions are eigenfunctions with eigenvalue $1$ of $\TO^{\slow,\pm}_{3,s}$. 

The combination of \cite{Chang_Mayer_eigen, Chang_Mayer_extension, Hilgert_Mayer_Movasati, Deitmar_Hilgert, Fraczek_Mayer_Muehlenbruch} shows that \eqref{iso_modular} provides also an isomorphism for certain representations $\chi$. These studies take advantage of the special structure of $\TO^{\fast,\pm}_{3,s}$ which is not present anymore for $q>3$. Therefore, in the general case, the isomorphism, as stated in Theorem~\ref{thm:main_finite} below, is more involved. For the case of $q=3$, one easily sees that the isomorphism in Theorem~\ref{thm:main_finite} reduces to \eqref{iso_modular}.

\subsubsection{Statement of main theorem for odd $q$}

We start with an \textit{informal} abstract deduction of the isomorphism. Every object or step in the following which requires technical justification (e.\,g., raises convergence questions) is dealt with in the actual proof of Theorem~\ref{thm:main_finite} below, see Sections~\ref{sec:holomorphy}-\ref{sec:proofmain} below.

The principal objects for the isomorphism are the \textit{slow} discretizations for the geodesic flow and the \textit{slow} transfer operators. The \textit{fast} discretizations and the \textit{fast} transfer operators arise as follows: Whenever the acting element in the slow discretization is parabolic, one induces on this element in order to construct the fast discretization. More precisely, suppose that $p\in\PSL_2(\R)$ is parabolic with fixed point $a\in\R\cup\{\infty\}$ and suppose further that the slow discrete dynamical system contains a component (submap) of the form
\begin{equation}\label{slow_sm}
 (p^{-1}.b, a) \to (b,a),\quad x\mapsto p.x
\end{equation}
(or $(a, p^{-1}.b)\to (a,b)$, $x\mapsto p.x$). Then, for the fast discretization, this submap is substituted by the maps ($n\in\N$)
\begin{equation}\label{fast_sm}
 (p^{-n}.b, p^{-(n+1)}.b) \to (b, p^{-1}.b),\quad x\mapsto p^n.x.
\end{equation}
Let $1_W$ denote the characteristic function of any set $W$. The map in \eqref{slow_sm} contributes to the slow transfer operator the term
\begin{equation}\label{slow_to}
 1_{(b,a)}\cdot \alpha_s(p),
\end{equation}
the map in \eqref{fast_sm} contributes to the fast transfer operator the term
\begin{equation}\label{fast_to}
 1_{(b,p^{-1}.b)}\cdot \sum_{n\in\N} \alpha_s(p^n).
\end{equation}
We refer to \cite{Moeller_Pohl, Pohl_hecke_infinite, Pohl_spectral_hecke, Pohl_representation} for a detailed description of the induction process and explicit examples. 

In the previous sections we have only provided the (equivalent) matrix representations for transfer operators. We refer to \cite{Moeller_Pohl} for a detailed explanation how to switch between those and \eqref{slow_to}-\eqref{fast_to}. 

At those places where the acting element is hyperbolic, the slow and the fast discretizations are identical. The guiding idea for the isomorphism map is that we want to assign to an eigenfunction $\varphi$ of the slow transfer operator the (unique) eigenfunction $f$ of the fast transfer operator that is `dynamically as identical as possible to $\varphi$', and vice versa. We elaborate this idea to make it more precise.

First let $\varphi$ be a (given) eigenfunction with eigenvalue $1$ of $\TO_s^\slow$. In the following we construct a (unique) candidate for an eigenfunction $f$ with eigenvalue $1$ of $\TO_s^\fast$. At those intervals where the slow and fast discretizations are identical the maps $f$ and $\varphi$ should coincide. Thus, if $I_0$ is an interval arising in a submap (as an image of a map as in \eqref{submaps}!) and the acting element is hyperbolic then we define
\begin{equation}\label{defbasic}
 f\vert_{I_0} \sceq \varphi\vert_{I_0}.
\end{equation}
If $I_p$ is an interval in a submap (again, as an image interval!) where a parabolic element is acting, say $p$ is the parabolic element and $I_p=(b,p^{-1}.b)$ the interval, then on $I_p$, the function $f$ heuristically needs to be the difference between $\varphi$ and one $p$-iterate of $\varphi$. Thus we define 
\begin{equation}\label{defparabdyn}
 f\vert_{I_p} \sceq \big(1-\alpha_s(p)\big)\varphi\vert_{I_p}.
\end{equation}

Now let $f$ be a (given) eigenfunction with eigenvalue $1$ of $\TO_s^\fast$. We want to define a (unique) candidate for an eigenfunction $\varphi$ with eigenvalue $1$ if $\TO_s^\slow$ such that the definitions \eqref{defbasic} and \eqref{defparabdyn} are inverted. Thus, on an interval $I_0$ as for \eqref{defbasic} we set
\[
 \varphi\vert_{I_0} \sceq f\vert_{I_0}.
\]
Suppose that the parabolic element $p$ and the interval $I_p$ are as for \eqref{defparabdyn}. The formal inverse of $(1-\alpha_s(p))$ is
\[
 \sum_{n=0}^\infty \alpha_s(p^n) = 1 + \sum_{n\in\N} \alpha_s(p^n).
\]
Therefore we set
\begin{equation}\label{definverse}
 \varphi\vert_{I_p} \sceq \Bigg(1 + \sum_{n\in\N} \alpha_s(p^n)\Bigg)f\vert_{I_p}.
\end{equation}
A major point in the proof of Theorem~\ref{thm:main_finite} below is to discuss the convergence issues raised by \eqref{definverse} and to establish that it is indeed inverse to \eqref{defparabdyn}. We note already here that as soon as $\varphi=o(x^{-2s})$ is established or assumed, one easily sees that \eqref{definverse} is indeed the inverse to \eqref{defparabdyn}.

\begin{thm}\label{thm:main_finite}
Let $s\in\C\setminus\{\tfrac12\}$ such that $\Rea s > 0$. Then the spaces 
$\SFE_{s}^{\omega,\as,\pm}$ and $\FFE_{s}^\pm$ are isomorphic (as vector 
spaces). The isomorphism is given by
\[
 \FFE_{s}^\pm \to \SFE_{s}^{\omega,\as,\pm},\quad f=(f_{0}, f_{-1})^\top 
\mapsto \varphi,
\]
where
\begin{equation}\label{ftophi}
 \varphi\vert_{D_{0}} \sceq f_{0}\vert_{D_0} \quad\text{and}\quad \varphi\vert_{D_{-1}} 
\sceq \left( 1 + \TO_{-1,s}^\fast\right)f_{-1}\vert_{D_{-1}}.
\end{equation}
The inverse isomorphism is
\[
 \SFE_{s}^{\omega,\as,\pm} \to \FFE_{s}^\pm,\quad \varphi\mapsto f=(f_{0}, 
f_{-1})^\top,
\]
where $f$ is determined by
\begin{equation}\label{phitof}
 f_{0}\vert_{D_0} \sceq \varphi\vert_{D_{0}} \quad\text{and}\quad f_{-1}\sceq 
\big(1-\alpha_s(g_{q,-1})\big)\varphi\vert_{D_{-1}}.
\end{equation}
These isomorphisms induce isomorphisms between $\SFE_{s}^{\omega,\dec,\pm}$ 
and $\FFE_{s}^{\dec,\pm}$.
\end{thm}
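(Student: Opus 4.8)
Denote by $\Phi\colon \FFE_{q,s}^\pm \to \SFE_{q,s}^{\omega,\as,\pm}$ the map \eqref{ftophi} and by $\Psi$ the map \eqref{phitof}, and abbreviate $p\sceq g_{q,-1}$ and $\mc N_s \sceq \TO_{q,-1,s}^\fast = \sum_{n\geq 1}\alpha_s(p^n)$. The element $p$ is parabolic with fixed point $0$, and the induction producing the fast discretization from the slow one is performed exactly along $p$. The algebraic backbone of the argument is the pair of telescoping identities
\[
 \big(1-\alpha_s(p)\big)\big(1+\mc N_s\big) = \id = \big(1+\mc N_s\big)\big(1-\alpha_s(p)\big),
\]
which hold for $\Rea s > \tfrac12$ on functions on $D_{-1}$ of the growth permitted by \eqref{def_sfe} and extend to all $\Rea s > 0$, $s\neq\tfrac12$, by meromorphic continuation of $\mc N_s$ and the identity theorem (Section~\ref{sec:meromorphic}, Theorem~\ref{fast_props}(ii)). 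Granting that $\Phi$ and $\Psi$ land in the asserted spaces, these identities give $\Phi\circ\Psi = \id$ and $\Psi\circ\Phi = \id$ at once, since both maps leave the $D_0$-component untouched and act on the $D_{-1}$-component by $1+\mc N_s$ respectively $1-\alpha_s(p)$. Thus the real work is to show that the maps are well defined.

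For $\Phi$, the first point is global real-analyticity. Writing $f = \TO_{q,s}^{\fast,\pm}f$ as its two component equations and subtracting them, the summands carrying $\TO_{q,0,s}^\fast f_0$ and the outer factor $(1\pm\alpha_s(Q))$ cancel (for both signs $(1\pm\alpha_s(Q))\mp\alpha_s(Q) = 1$), leaving
\[
 f_0 = \big(1+\mc N_s\big)f_{-1}.
\]
Read as an identity of holomorphic functions on a common complex neighborhood of $\tfrac1{\ell(q)}$ — each component equation being valid not only on its defining interval but on the full domain of holomorphy of its right-hand side, cf.\@ the sets $C_{(1)}$ — this shows that $f_0\vert_{D_0}$ and $(1+\mc N_s)f_{-1}\vert_{D_{-1}}$ are restrictions of one holomorphic function, so $\varphi = \Phi(f)$ is real-analytic across the junction and on all of $(0,1)$. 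That $\varphi$ solves the slow equation I would verify from $\sum_{k=1}^m\alpha_s(g_{q,-k}) = \alpha_s(p) + \TO_{q,0,s}^\fast$, whence $\TO_{q,s}^{\slow,\pm} = (1\pm\alpha_s(Q))(\alpha_s(p) + \TO_{q,0,s}^\fast)$: the functoriality $\alpha_s(p)\mc N_s = \mc N_s - \alpha_s(p)$ yields $\alpha_s(p)\varphi = \mc N_s f_{-1}$ on $D_{-1}$, and substituting this and $\TO_{q,0,s}^\fast\varphi = \TO_{q,0,s}^\fast f_0$ into the second component equation collapses $\TO_{q,s}^{\slow,\pm}\varphi = \varphi$ to an identity already contained in $f = \TO_{q,s}^{\fast,\pm}f$; analyticity then propagates it from $D_{-1}$ to all of $(0,1)$.

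For $\Psi$ the same computation is run backwards: from the slow equation and the inverse relation $\varphi = (1+\mc N_s)f_{-1}$ one recovers both component equations, and the holomorphic extensions of $f_0 = \varphi\vert_{D_0}$ and $f_{-1} = (1-\alpha_s(p))\varphi\vert_{D_{-1}}$ to the required neighborhoods $\mc E_0,\mc E_{-1}$ come from the unique real-analytic continuation of $\varphi$ to $\R_{>0}$ furnished by Proposition~\ref{prop:slow_extension}. The step I expect to be the main obstacle is the behavior at the parabolic fixed point $0$: one must check that $f_{-1} = (1-\alpha_s(p))\varphi$ extends continuously to $0$, i.e.\@ lies in $\mc B(D_{-1})$. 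Inserting $\varphi(x) = c/x + O_{x\to 0^+}(1)$ and using that the automorphy factor of $p$ tends to $1$ at $0$, the two leading singularities combine into $(1-\chi(p))c\,/x$, which is removable exactly when $c\in V_p$. That $c\in V_p$ is forced is precisely the strengthened asymptotics of Remark~\ref{rem:stronger}: it follows by inserting the asymptotics into the slow eigenvalue equation (Corollary~\ref{cor:vanish}); symmetrically, on the $\Phi$-side the Lerch-zeta expansion of $\mc N_s f_{-1}$ near $0$ produces a $c/x$ term only through its $V_p$-component, the $E_r$-component staying bounded by cancellation in the oscillatory sum.

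Finally, because $\varphi\vert_{D_{-1}} = (1+\TO_{q,-1,s}^\fast)f_{-1}$ under the isomorphism, the defining smoothness condition \eqref{f_decay} of $\FFE_{q,s}^{\dec,\pm}$ is literally the condition \eqref{phi_decay} of $\SFE_{q,s}^{\omega,\dec,\pm}$ transported along $\Phi$; hence the isomorphism restricts to one between the decay subspaces, which settles the last assertion.
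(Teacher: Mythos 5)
Your overall architecture coincides with the paper's: the same maps \eqref{ftophi} and \eqref{phitof}, the junction identity $f_0=(1+\TO^\fast_{q,-1,s})f_{-1}$ obtained by subtracting the two component equations, the operator relation $\alpha_s(g_{q,-1})\circ\TO^\fast_{q,-1,s}=\TO^\fast_{q,-1,s}-\alpha_s(g_{q,-1})$ for the $\Phi$-direction, the holomorphic extensions from Propositions~\ref{prop:slow_extension} and \ref{prop:fast_extension}, and the Lerch-zeta asymptotics for the growth statement. However, there is a genuine gap at the single step that carries most of the analytic weight: your claim that the inversion identity $\big(1+\TO^\fast_{q,-1,s}\big)\big(1-\alpha_s(g_{q,-1})\big)\varphi=\varphi$ extends from $\Rea s>\tfrac12$ to all $\Rea s>0$, $s\neq\tfrac12$, ``by meromorphic continuation of $\mc N_s$ and the identity theorem.'' The identity theorem in $s$ applies only to identities whose input is independent of $s$; here $\varphi$ exists only for the one value of $s$ under consideration, so there is nothing to continue. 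Worse, the meromorphically continued $\mc N_s$ is defined (via Taylor coefficients and the Lerch zeta function) only on functions smooth at $0$, and for a general $\varphi$ with $\varphi(x)=c/x+O_{x\to0^+}(1)$ the function $(1-\alpha_s(g_{q,-1}))\varphi$ is merely bounded near $0$, so the left-hand side is not even defined by continuation. The paper must first use the \emph{slow eigenvalue equation} to rewrite $\psi=(1-\alpha_s(g_{q,-1}))\varphi$ as a sum of terms $\alpha_s(g)\varphi$ with $g$ mapping a neighborhood of $0$ away from $0$ (see \eqref{defpsi}), which makes $\psi$ real-analytic at $0$; this is also what actually puts $f_{-1}$ into $\mc B(D_{-1})$ --- your removable-singularity computation only yields boundedness, not holomorphy at $0$.

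Even once $\mc N_s\psi$ is defined, the continued identity can fail: what the operator relation gives is only that $Q_0\sceq\alpha_s(g_{q,-1})\varphi-\TO^\fast_{q,-1,s}\psi$ is $\alpha_s(g_{q,-1})$-\emph{invariant} (Proposition~\ref{prop:ae_generic}), and nonzero invariant functions exist. Killing $Q_0$ is exactly where the growth condition \eqref{def_sfe} enters, via Lemma~\ref{lem:Q0}: one shows $Q_0=O(x^{-2s})$, that $o(x^{-2s})$ forces $Q_0=0$, and --- in the delicate range $0<\Rea s\le\tfrac12$, $s\ne\tfrac12$ --- that a $c/x$ leading term is incompatible with boundedness of $\alpha_s(Q)Q_0$, the critical line requiring a separate oscillation argument with $\big|x^{it}-(x+k\ell)^{it}\big|$. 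None of this is subsumed by ``meromorphic continuation,'' and without it the map $\Psi$ is not shown to invert $\Phi$ for $\Rea s\le\tfrac12$. To repair your proof you should replace the telescoping-plus-continuation paragraph by the statement and proof of Corollary~\ref{cor:equal} (via Propositions~\ref{prop:ae_generic} and Lemma~\ref{lem:Q0}); the rest of your argument then goes through as in the paper.
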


If one ignores all questions of convergence and in particular uses \eqref{TOm1_def} for $\TO^\fast_{-1,s}$ then a straightforward formal calculation (converting the heuristics from above) shows that \eqref{ftophi} and \eqref{phitof} indeed map eigenfunctions with eigenvalue $1$ of $\TO^{\fast,\pm}_{s}$ to eigenfunctions with eigenvalue $1$ of $\TO^{\slow,\pm}_{s}$, and vice versa. 

For a rigorous proof of Theorem~\ref{thm:main_finite} we first show two intermediate results. The first one, proven in Section~\ref{sec:holomorphy} below, discusses the maximal domains of holomorphy for the elements of $\SFE_{s}^{\omega,\hol,\pm}$ and $\FFE_{s}^{\pm}$. \textit{A priori}, these elements are defined on different domains: the functions in $\SFE_{s}^{\omega,\hol,\pm}$ are defined on some interval in $\R$ whereas function vectors in $\FFE_{s}^{\pm}$ are defined on certain open sets in $\C$. The result on the maximal domains simplifies to compare the functions in these two spaces.

As a second intermediate result we show, in Section~\ref{sec:crucial} below, that
\[
 \TO_{-1,s}^\fast f_{-1} = \alpha_s(g_{-1})\varphi 
\]
whenever $f=(f_0,f_{-1})^\top \in \FFE_s^{\pm}$ is given and $\varphi$ is 
defined by \eqref{ftophi}, or $\varphi\in \SFE^{\omega,\as,\pm}_s$ is given and 
$f$ is defined by \eqref{phitof}. This is a crucial identity needed for establishing Theorem~\ref{thm:main_finite}.

\subsubsection{Maximal domains of holomorphy}\label{sec:holomorphy}

In order to study the maximal domains of holomorphy for the elements of 
$\SFE_{s}^{\omega,\hol,\pm}$ and $\FFE_{s}^{\pm}$ we start by investigating the contraction properties of the group elements acting in the iterates of the transfer operators.

Let 
\[
 A\sceq \left\{ g_{\pm 1}^{-1},\ldots, g_{\pm m}^{-1} \right\}
\]
be the elements acting  in the transfer operators (the `alphabet'). For each 
$n\in\N_0$, let
\[
 A^n \sceq \left\{ g_{k_1}^{-1}\cdots g_{k_n}^{-1} \ \left\vert\  
\text{$g_{k_j}^{-1} \in A$ for $j=1,\ldots, n$}\right.\right\}
\]
denote the \textit{words} of \textit{length} $n$ over $A$, and let
\[
 A^\ast \sceq \bigcup_{n\in\N_0} A^n
\]
denote the set  of all words over $A$. Further let 
\begin{align*}
 A^n_{-1} &\sceq \left\{ g_{k_1}^{-1}\cdots g_{k_n}^{-1} \in A^n \ \left\vert\  
k_1 = -1 \vphantom{g_{k_1}^{-1}\cdots g_{k_n}^{-1}} \right.\right\},
 \\
 A^n_{(-1,1)} & \sceq \left\{ g_{k_1}^{-1}\cdots g_{k_n}^{-1} \in A^n_{-1} \ 
\left\vert\  k_n = 1 \vphantom{g_{k_1}^{-1}\cdots g_{k_n}^{-1}} \right.\right\},
 \\
 A^n_{(-1,-1)} & \sceq \left\{ g_{k_1}^{-1}\cdots g_{k_n}^{-1} \in A^n_{-1} \ 
\left\vert\  k_n = -1 \vphantom{g_{k_1}^{-1}\cdots g_{k_n}^{-1}} 
\right.\right\},
\end{align*}
and
\begin{align*}
 A^n_{0} &\sceq \left\{ g_{k_1}^{-1}\cdots g_{k_n}^{-1} \in A^n \ \left\vert\ 
k_1 \in \{ -2,\ldots, -m\} \vphantom{g_{k_1}^{-1}\cdots g_{k_n}^{-1}} 
\right.\right\},
 \\
 A^n_{(0,1)} &\sceq \left\{ g_{k_1}^{-1}\cdots g_{k_n}^{-1} \in A^n_0 \ 
\left\vert\ k_n  = 1 \vphantom{g_{k_1}^{-1}\cdots g_{k_n}^{-1}} \right.\right\},
 \\
 A^n_{(0,-1)} &\sceq \left\{ g_{k_1}^{-1}\cdots g_{k_n}^{-1} \in A^n_0 \ 
\left\vert\ k_n  = -1 \vphantom{g_{k_1}^{-1}\cdots g_{k_n}^{-1}} 
\right.\right\},
\end{align*}
as well as
\begin{align*}
 A^\ast_{-1} \sceq \bigcup_{n\in\N_0} A^n_{-1},\quad 
 A^\ast_{(-1,1)} \sceq \bigcup_{n\in\N_0} A^n_{(-1,1)},\quad
 A^\ast_{(-1,-1)} \sceq \bigcup_{n\in\N_0} A^n_{(-1,-1)}
\end{align*}
and
\begin{align*}
 A^\ast_{0} \sceq \bigcup_{n\in\N_0} A^n_{0},\quad 
 A^\ast_{(0,1)} \sceq \bigcup_{n\in\N_0} A^n_{(0,1)},\quad
 A^\ast_{(0,-1)} \sceq \bigcup_{n\in\N_0} A^n_{(0,-1)}
\end{align*}
Let 
\[
 \C_R \sceq \{z\in\C\mid \Rea z \geq 0\}.
\]
We recall the sets $D_{-1}=(0,1/\ell)$ and $D_0=(1/\ell,1)$ from \eqref{def_sets}. Throughout and in particular in the following lemma, the notion of finite sets includes the empty set.

\begin{lemma}\label{lem:allcontracts}
Let $\mc U_{-1}$ be a rounded neighborhood of $D_{-1}$, and $\mc U_{0}$ a 
rounded neighborhood of $D_{0}$. Let $\mc U\subseteq \C$ be an open bounded set 
that is bounded away from $(-\infty,0]$, and let $\mc V\subseteq \C$ be an open 
bounded set that is bounded away from $(-\infty,-1/\ell]$. Then the following 
properties are satisfied:
\begin{enumerate}[{\rm (i)}]
\item\label{lem:alli} For all but finitely many $g\in A^\ast_{-1}$ we have
$g.\mc U \subseteq \mc U_{-1}$ and $gQ.\mc U \subseteq \mc U_{-1}$ and 
$g.(\mc U_{-1}\cap \C_R) \subseteq \mc U_{-1}\cap \C_R$. 
\item For all but finitely many $g\in A^\ast_{0}$ we have $g.\mc U 
\subseteq \mc U_{0}$ and $gQ.\mc U \subseteq \mc U_{0}$ and 
$g.(\mc U_{0}\cap \C_R) \subseteq \mc U_{0}\cap \C_R$. 
\item For all but finitely many $g\in A^\ast_{0}\setminus A^\ast_{(0,-1)}$ 
we have $g.\mc V \subseteq \mc U_{0}$.
\item For all but finitely many $g\in A^\ast_{0}\setminus A^\ast_{(0,1)}$ we have $gQ.\mc V \subseteq \mc U_{0}$.
\item For all but finitely many $g\in A^\ast_{-1}\setminus A^\ast_{(-1,-1)}$ 
we have $g.\mc V \subseteq \mc U_{-1}$.
\item For all but finitely many $g\in A^\ast_{-1}\setminus A^\ast_{(-1,1)}$ 
we have  $gQ.\mc V \subseteq \mc U_{-1}$.
\end{enumerate}
\end{lemma}

We recall from \eqref{parab_explicit} (and the text below it) that the elements $g_{\pm 1}^{-1}$ are parabolic with fixed point $0$ and $\infty$, respectively, and that all the elements $g_{\pm 2}^{-1},\ldots, g_{\pm m}^{-1}$ are hyperbolic with attracting fixed points in $(0,\infty)$ (bounded away from $0$ and $\infty$). Lemma~\ref{lem:allcontracts} follows from the contraction properties of the action of combinations of these group elements. Its proof can essentially be read off from 
Figures~\ref{fig:forward} and \ref{fig:backward}. Before we provide a rather detailed proof further below, we sketch how these two figures indicate the proof of Lemma~\ref{lem:allcontracts}\eqref{lem:alli}.

Figure~\ref{fig:forward} indicates 
the location of $g.\C_R$ for $g\in A^\ast$. It shows that if $\mc W$ is a subset of  
$\C_R$ then $h.\mc W \subseteq \mc U_{-1}$ for all sufficiently 
long words $h\in A^\ast_{-1}$. Since $\C_R$ is invariant under the action of 
$Q$, it also follows that $hQ.\mc W\subseteq \mc U_{-1}$ for all sufficiently long words $h\in 
A^\ast_{-1}$. 
\begin{figure}[h]
\begin{center}
\includegraphics{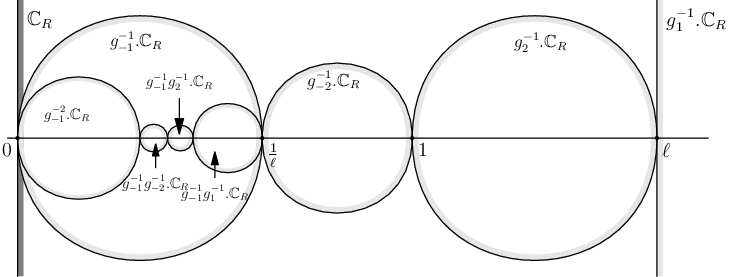}
\end{center}
\caption{Images of $\C_R$ under $A^\ast$ for $q=5$.}
\label{fig:forward}
\end{figure}
\begin{figure}[h]
\begin{center}
\includegraphics{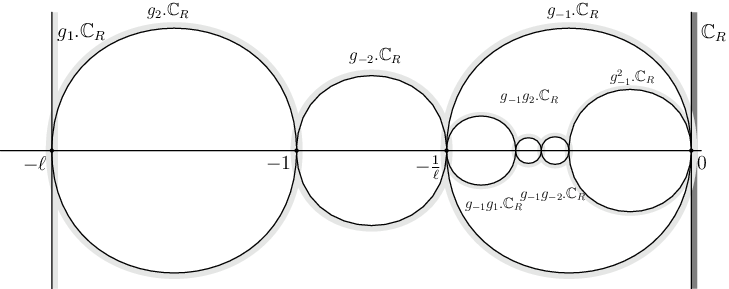}
\end{center}
\caption{Images of $\C_R$ under $A^{-\ast}$ for $q=5$.}
\label{fig:backward}
\end{figure}

Figure~\ref{fig:backward} indicates the location of $g^{-1}.\C_R$ for $g\in 
A^\ast$. Since $\mc U$ and $Q.\mc U$ are bounded away from $(-\infty,0]$ there exists $n\in\N$ such that for all words $g$ in $A^*$ of length at least $n$ we have 
\[
 \mc U, Q.\mc U \subseteq g^{-1}.\C_R.
\]
Thus, $g.\mc U, gQ.\mc U \subseteq \C_R$. Using $g.\mc U$ and $gQ.\mc U$ in place of $\mc W$ in the consideration above shows that for all sufficiently long words $h$ in $A^*_{-1}$ we have $h.\mc U, hQ.\mc U\subseteq \mc U_{-1}$.

\begin{proof}[Proof of Lemma~\ref{lem:allcontracts}]
We only provide a proof for \eqref{lem:alli} as the other statements are seen analogously. We start by showing \eqref{lem:alli} for $\mc U\subseteq\C_R$. Indeed we establish it for $\C_R$ instead of $\mc U$, which is a slightly stronger statement. We remark that $Q.\C_R = \C_R$.

The set
\[
 \mc F^* \sceq \{ z\in\h\mid \Rea z \in (0,\ell),\ |z|>1,\ |z-\ell|>1\}
\]
is a fundamental domain for the action of $\Gamma$ on $\h$. Its vertical sides 
\[
\{z\in\overline{\mc F^*}\mid \Rea z = 0\}\quad\text{and}\quad\{z\in\overline{\mc F^*}\mid \Rea z = \ell\}
\]
are identifies via $T$, and the two bottom sides 
\[
\{ z\in\overline{\mc F^*}\mid |z|=1,\ \Rea z\leq \ell/2\}\quad\text{and}\quad\{z\in\overline{\mc F^*}\mid |z-\ell|=1,\ \Rea z\geq \ell/2\}
\]
are identified via $S$. The set $\mc F^*$ relates to the fundamental domain $\mc F$ in Figure~\ref{funddoms} by shifting its part in $\{\Rea z> \ell/2\}$ by $-\ell$. Let 
\[
 B \sceq \bigcup_{k=1}^q (TS)^q.\overline{\mc F^*}.
\]
We state several properties of the set $B$ and refer for proofs to \cite[Section~4]{Pohl_Spratte} and \cite{Pohl_Symdyn2d}. From the side-pairing properties of $\mc F^*$ it follows that $B$ is the hyperbolic polyhedron (see Figure~\ref{fig:set_B}) with vertices
\begin{align*}
 & \infty,\ g_1^{-1}.0 = g_2^{-1}.\infty,\ g_2^{-1}.0=g_3^{-1}.\infty,\ \ldots,\  g_{m}^{-1}.0=g_{-m}^{-1}.\infty,\\
 & \ldots, g_{-2}^{-1}.0=g_{-1}^{-1}.\infty,\ g_{-1}^{-1}.0=0.
\end{align*}
\begin{figure}[h]
\begin{center}
\includegraphics{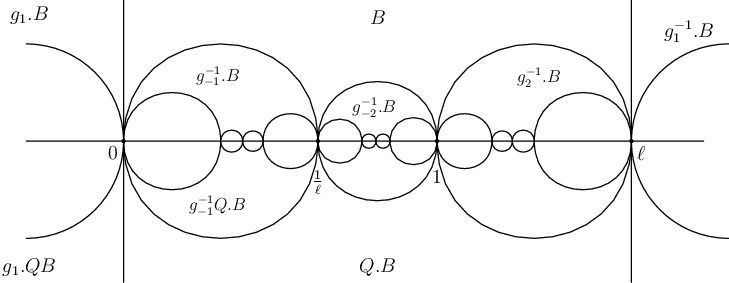}
\end{center}
\caption{The sets $B$ and $Q.B$ and some neighboring translates.}
\label{fig:set_B}
\end{figure}

Further, 
\[
 \Stab_\Gamma(B) = \{ (TS)^k \mid k=1,\ldots, q\}
\]
stabilizes $B$ (as a set), and 
\[
 \{ g.B \mid g\in\Gamma\} = \{ g.B \mid g\in\Gamma/\Stab_\Gamma(B)\}
\]
is a tesselation of $\h$. The neighboring translates of $B$ in $\h\cap \C_R$ are given by $g_j^{-1}.B$ with $j\in\{\pm1,\ldots,\pm m\}$, the overlapping side of $g_j^{-1}.B$ with $B$ is
\[
 B\cap g_j^{-1}.B = g_j^{-1}.(i\R_{>0}).
\]
Note that $Q.B$ is the reflection of $B$ at the real axis. Let
\[
\h^-\sceq \{z\in\C\mid \Ima z < 0\} 
\]
denote the lower half plane, and recall the action of $\Gamma$ on $\C$ as defined in \eqref{action_ext}. Then 
\[
 \{ gQ.B\mid g\in\Gamma\}
\]
is a tesselation of $\h^-$, and the neighboring translates of $Q.B$ in $\h^-\cap \C_R$ are given by $g_j^{-1}Q.B$ with $j\in\{\pm1,\ldots,\pm m\}$. 

Let $E\sceq B\cup Q.B$. The tesselation properties of $B$ and $Q.B$ (and the continuity of the $\Gamma$-action on $\C$) show that 
\[
 \{g.E\mid g\in\Gamma\}
\]
tesselates $\C$, and
\[
 \{ g.E \mid g\in A^*\}
\]
tesselates $\C_R$. Further, the geometric forms of $B$ and $Q.B$, and hence of $E$, yield the following properties:
\begin{enumerate}[(a)]
\item If $h,k\in A^*$ then $hk.\C_R \subsetneqq h.\C_R$. Since the $\Gamma$-action on $\C$ is continuous, this statement holds indeed for $\C_R$, not only for $\C_R\smallsetminus\R$.
\item For $M\subseteq\C$ let $\diam(M)$ denote the diameter of $M$ in the Euclidean metric of $\C$. For any sequence $(h_n)_{n\in\N}$ in $A$ we have
\[
 \diam\big(h_n\cdots h_1.\C_R \big) \stackrel{n\to\infty}{\longrightarrow} 0
\]
unless $(h_n)$ is eventually constant $g_1^{-1}$. Further, unless $(h_n)$ is constant $g_1^{-1}$, for all $n\in\N$, $h_n\cdots h_1.\C_R$ is a Euclidean ball centered at the real axis.
\item Let $\len(h)$ denote the length of $h\in A^*$. Then uniformly for $k\in\{-m,\ldots,-1\}\cup\{2,\ldots, m\}$ we have 
\[
 \diam\big( g_k^{-1}h.\C_R\big) \longrightarrow 0 \qquad \text{as $h\in A^*$, $\len(h)\to\infty$.}
\]
\end{enumerate}
In particular, for $h\in A^*_{-1}$, the set $h.\C_R$ is contained in an $\eps$-rounded neighborhood of $D_{-1}$ with $\eps$ only depending on the length of $h$, and shrinking to $0$ as the length of $h$ goes to $\infty$. Since $\mc U_{-1}$ is $\eps$-rounded for some small $\eps>0$, for all but finitely many $g\in A^*_{-1}$ we have $g.\C_R\subseteq \mc U_{-1}$. This shows the statement for $\mc U\subseteq\C_R$, and it shows that $g.(\mc U_{-1}\cap\C_R)\subseteq \mc U_{-1}\cap \C_R$ for all but finitely many $g\in A^*_{-1}$.

We now show \eqref{lem:alli} for the case that $\mc U$ is not necessarily contained in $\C_R$. To that end we set 
\[
 \C_L \sceq \{z\in\C\mid \Rea z\leq 0\}.
\]
The neighboring $\Gamma$-translates of $E=B\cup Q.B$ in $\C_L$ are given by 
\[
 E^*\sceq g_{-m}.E=g_{-m-1}.E=\ldots=g_{-1}.E=g_1.E=\ldots=g_m.E. 
\]
The sides of $E^*$ are given by 
\[
 g_{-m}.(i\R),\ \ldots,\ g_{-1}.(i\R),\ g_1.(i\R),\ \ldots,\ g_m.(i\R),\ i\R.
\]
For $n\in\N_0$ let 
\[
 A^{-n} \sceq \{ h \mid h^{-1}\in A^n\},
\]
and set
\[
 A^{-*} \sceq \bigcup_{n\in\N_0} A^{-n}.
\]
Arguing analogously to above, we find that uniformly for $k\in\{-m,\ldots,-1\}\cup\{2,\ldots, m\}$,
\[
 \diam\big( g_kh.\C_L\big) \longrightarrow 0 \qquad \text{as $h\in A^{-*}$, $\len(h)\to\infty$.}
\]
For each fixed $m_0\in\N$, uniformly for $0\leq m\leq m_0$ and $k\in\{-m,\ldots,-1\}\cup\{2,\ldots, m\}$ we have
\[
 \diam\big( g_1^mg_kh.\C_L\big) \longrightarrow 0 \qquad \text{as $h\in A^{-*}$, $\len(h)\to\infty$.}
\]
Further, for $m\in\N$ and all $h\in A^{-*}$ we have
\[
 g_1^mh.\C_L \subseteq \{ z\in\C\mid \Rea z\leq m\ell\}.
\]
Thus, since $\mc U$ is bounded away from $(-\infty,0]$, there exists $n_0\in\N$ such that for all $h\in A^{-*}$, $\len(h)>n_0$, 
\[
 \mc U \subseteq \C\smallsetminus h.\C_L.
\]
In turn, for $g\in A^*$, $\len(g)>n_0$, 
\[
 g.\mc U\subseteq \C_R.
\]
This completes the proof.
\end{proof}

For $n\in \N_0$ let
\[
 A^n_{L} \sceq A^n_{-1} \cup A^n_{0}.
\]
Then $A^n_L\cup A^n_LQ$ are the elements that act in $\left(\TO^{\slow,\pm}_s\right)^n$. Set
\[
\C^\ast_R\sceq \{ z\in \C \mid \Rea z > 0\}\quad\text{and}\quad 
\C'\sceq\C\setminus (-\infty,0].
\]
Lemma~\ref{lem:allcontracts} allows us to deduce the maximal domain of holomorphy for the functions in $\SFE_s^{\omega,\hol,\pm}$.

Recall the definitions of $\alpha_s^{(1)}$ and $\alpha_s^{(2)}$ from \eqref{defalpha12}, and recall from Section~\ref{sec:actions} that the maximal domain of holomorphy for $\alpha_s^{(k)}(g^{-1})f(z)$ depends not only on the considered function $f$ and the group element $g\in\Gamma$ but also on the choice of $k\in\{1,2\}$. In Proposition~\ref{prop:slow_extension} below, the restrictions on the domain of holomorphy are indeed forced by the maximal domains of holomorphy for $\alpha_s^{(k)}(g)$, $g\in A_L^n$, $n\in\N_0$.

\begin{prop}\label{prop:slow_extension}
Let $s\in\C$ and $\varphi \in \SFE_{s}^{\omega,\hol,\pm}$. If we use $\alpha_s^{(1)}$ 
for $\alpha_s$ then $\varphi$ extends holomorphically to $\C^\ast_R$ and 
satisfies \eqref{slow_fe} on all of $\C^\ast_R$. If we use $\alpha_s^{(2)}$ for $\alpha_s$ then $\varphi$ 
extends holomorphically to $\C'$ and its extension satisfies \eqref{slow_fe} on $\C'$.
\end{prop}

\begin{proof}
By Lemma~\ref{lem:nhbs} we find a rounded neighborhood $\mc U$ of $(0,1)$ to which $\varphi$ has a holomorphic extension. Without loss of generality, we may assume that for 
$k=1,\ldots, m$,  
\begin{equation}\label{allgood}
g_{-k}^{-1}.\mc U \subseteq \mc U\quad\text{and}\quad g_{-k}^{-1}Q.\mc 
U\subseteq \mc U.
\end{equation}
Thus, the identity theorem of complex analysis implies that 
the functional equation
\[
 \varphi = \TO_s^{\slow,\pm}\varphi = \sum_{k=1}^m \big( \alpha_s(g_{-k}) \pm 
\alpha_s(Qg_{-k}) \big) \varphi
\]
remains valid on all of $\mc U$. Even more, for any $n\in\N$ we have
\begin{equation}\label{slow_iteration}
 \varphi = \left(\TO_{s}^{\slow,\pm}\right)^n\varphi = \Bigg( \sum_{a\in 
A^n_{L}} \alpha_s(a^{-1}) \pm \alpha_s(Qa^{-1}) \Bigg)\varphi
\end{equation}
on $(0,1)$, and hence on $\mc U$. 

Note that for $\alpha_s^{(1)}$ the set $\C^\ast_R$ is the largest domain that contains $(0,1)$ and on which all the cocycles in \eqref{slow_iteration} (for all $n\in\N$) are 
well-defined and holomorphic. For $\alpha_s^{(2)}$, the slit plane $\C'$ is the largest domain with these properties. In case we use $\alpha_s^{(1)}$ let $\mc D\sceq \C^\ast_R$, otherwise let $\mc D\sceq \C'$.

For $z_0\in\mc D$ fix an open bounded neighborhood $\mc W$ of $z_0$ in $\mc D$ that is bounded away from $(-\infty,0]$. By Lemma~\ref{lem:allcontracts} there exists $n_0\in\N$ such that for $n\geq n_0$ and $g\in A^n_L$ we have $g.\mc W\subseteq \mc U$ and $gQ.\mc W \subseteq \mc U$. We fix $n\geq n_0$ and define
\begin{equation}\label{def_ext1}
 \varphi_{\mc W} \sceq \Bigg( \sum_{a\in A^n_{L}} \alpha_s(a^{-1}) \pm \alpha_s(Qa^{-1}) 
\Bigg)\varphi \quad \text{on $\mc W\cup \mc U$.} 
\end{equation}
Note that the right hand side of \eqref{def_ext1} is indeed defined on $\mc W\cup\mc U$ since $\varphi$ is defined on $\mc U$, and $\mc U$ satisfies \eqref{allgood}. 

In order to see that the definition of $\varphi_{\mc W}$ is independent of the choice of $n$ let $m\geq n_0$. Without loss of generality, we may suppose that $m>n$. Using \eqref{slow_iteration} and \eqref{def_ext1} we find on all of $\mc W\cup\mc U$ the identity
\begin{align*}
&\Big( \sum_{a\in A^n_{L}} \alpha_s(a^{-1})  \pm \alpha_s(Qa^{-1}) \Big)\varphi 
\\
& \quad = \Big( \sum_{a\in A^n_{L}} \alpha_s(a^{-1})  \pm \alpha_s(Qa^{-1}) 
\Big) \Big( \sum_{b\in A^{m-n}_{L}} \alpha_s(b^{-1})  \pm \alpha_s(Qb^{-1}) 
\Big) \varphi
\\
& \quad = \Big( \sum_{a\in A^n_L, b\in A^{m-n}_L} \alpha_s(a^{-1}b^{-1}) 
\pm \alpha_s(Qa^{-1}b^{-1}) \pm \alpha_s(a^{-1}Qb^{-1}) + 
\alpha_s(Qa^{-1}Qb^{-1}) \Big)\varphi
\\
& \quad = \Big( \sum_{c\in A^m_L} \alpha_s(c^{-1}) \pm 
\alpha_s(Qc^{-1})\Big)\varphi.
\end{align*}
Thus, $\varphi_{\mc W}$ does not depend on the choice of $n\geq n_0$.

The identity \eqref{slow_iteration} implies immediately that $\varphi_{\mc W}=\varphi$ on $\mc U$. Moreover, if for $j\in\{1,2\}$, $z_j\in\mc D$, $\mc W_{z_j}$ is an open bounded neighborhood of $z_j$ in $\mc D$ bounded away from $(-\infty,0]$, and $\varphi_{\mc W_j}$ is the function defined by \eqref{def_ext1} then the combination of \eqref{def_ext1} with \eqref{slow_iteration} yields that 
\[
 \varphi_{\mc W_1} = \varphi_{\mc W_2}.
\]
From these observations it follows that if we fix for any $z\in\mc D$ an open bounded neighborhood $\mc W_z$ in $\mc D$ bounded away from $(-\infty,0]$, and let $\varphi_z$ denote the function defined by \eqref{def_ext1} then $\psi\colon\mc D\to\C$, 
\[
 \psi(z)\sceq \varphi_z(z)
\]
is a holomorphic extension of $\varphi$ to $\mc D$ which coincides with $\varphi$ on $\mc U$. The identity theorem yields that $\psi$ satisfies \eqref{slow_fe} on all of $\mc D$. 
\end{proof}

Let 
\[
 B \sceq \left\{g_{\pm1}^{-p}, g_{\pm2}^{-1},\ldots, g_{\pm m}^{-1}\ \left\vert\ 
p\in\N \vphantom{g_{\pm1}^{-p}}\right.\right\}.
\]
We call a word over the alphabet $B$ \textit{reduced} if it does not contain a 
subword of the form $g_1^{-p_1}g_1^{-p_2}$ or $g_{-1}^{-p_1}g_{-1}^{-p_2}$ with 
$p_1,p_2\in\N$. For each $n\in\N_0$, let
\[
 B^n \sceq \left\{ h_{k_1}\cdots h_{k_n} \ \left\vert\ \text{$h_{k_j}\in B$ for 
$j=1,\ldots,n$}\right.\right\}
\]
denote the set of reduced words of length $n$ over $B$. Further let
\begin{align*}
B^n_0 &\sceq \left\{ h_{k_1}\cdots h_{k_n}\in B^n \ \left\vert\ 
k_1\in\{-2,\ldots,-m\} \vphantom{h_{k_1}}\right.\right\},
\\
B^n_{(0,1)} &\sceq \left\{ h_{k_1}\cdots h_{k_n} \in B^n_{0} \ \left\vert\ k_n = 
1 \vphantom{h_{k_1}}\right.\right\},
\\
B^n_{-1} &\sceq \left\{ h_{k_1}\cdots h_{k_n}\in B^n \ \left\vert\ k_1=-1 
\vphantom{h_{k_1}}\right.\right\},
\\
B^n_{(-1,-1)} &\sceq \left\{ h_{k_1}\cdots h_{k_n}\in B^n_{-1} \ \left\vert\ 
k_n=-1 \vphantom{h_{k_1}}\right.\right\}
\intertext{and}
B^n_{(-1,1)} &\sceq \left\{ h_{k_1}\cdots h_{k_n}\in B^n_{-1} \ \left\vert\ 
k_n=1 \vphantom{h_{k_1}}\right.\right\}.
\end{align*}
Then these sets determine the elements that act in $\left(\TO^{\fast,\pm}_s\right)^n$, for the exact relation we refer to the proof of Proposition~\ref{prop:fast_extension} below. Lemma~\ref{lem:allcontracts} allows us to determine the maximal domain of holomorphy for the function vectors in $\FFE_s^{\pm}$.

\begin{prop}\label{prop:fast_extension}
Let $s\in \C$ and $f=(f_{0}, f_{-1})^\top \in \FFE_s^\pm$. If we use 
$\alpha_s^{(1)}$ for $\alpha_s$ then $f_{0}$ extends holomorphically to 
$\C^\ast_R$ and $f_{-1}$ extends holomorphically to 
\[
\C^\ast_\ell\sceq \{z\in\C\mid \Rea z > -1/\ell\}.
\]
The holomorphically extended function vector $f=(f_{0}, f_{-1})^\top$ satisfies
\begin{equation}\label{fast_fe}
 f = \begin{pmatrix}
\big(1\pm \alpha_s(Q)\big)\TO_{0,s}^\fast &  \big(1\pm 
\alpha_s(Q)\big)\TO_{-1,s}^\fast
\\[1mm]
\big(1\pm \alpha_s(Q)\big)\TO_{0,s}^\fast & \pm\alpha_s(Q)\TO_{-1,s}^\fast
\end{pmatrix}
f.
\end{equation}
If we use $\alpha_s^{(2)}$ for $\alpha_s$ then $f_{0}$ extends holomorphically 
to $\C'$ and $f_{-1}$ extends holomorphically to $\C\setminus (-\infty, 
-1/\ell]$, and the function vector $(f_{0}, f_{-1})^\top$ satisfies 
\eqref{fast_fe}.
\end{prop}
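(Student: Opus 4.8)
The plan is to run the argument of Proposition~\ref{prop:slow_extension} in the fast setting, the one essentially new feature being the infinite sums carried by $\TO_{-1,s}^\fast$. By construction of the inductive-limit space $\mc B=\mc B(D_0)\oplus\mc B(D_{-1})$, the components $f_0$ and $f_{-1}$ of a given $f\in\FFE_s^\pm$ are holomorphic on complex neighborhoods $\mc U_0\supseteq\overline{D_0}$ and $\mc U_{-1}\supseteq\overline{D_{-1}}$, which we may shrink so that the relevant inclusions hold. First I would propagate the eigenfunction identity $f=\TO_s^{\fast,\pm}f$ from $D_0\cup D_{-1}$ to $\mc U_0\cup\mc U_{-1}$ by the identity theorem, and then iterate it to $f=(\TO_s^{\fast,\pm})^nf$. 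The elements acting in $(\TO_s^{\fast,\pm})^n$ are indexed by the reduced words $B^n$ together with the $Q$-insertions produced by the factors $1\pm\alpha_s(Q)$, and the sub-families $B^n_0$, $B^n_{(0,1)}$, $B^n_{-1}$, $B^n_{(-1,1)}$ and $B^n_{(-1,-1)}$ separate the contributions to $f_0$ and $f_{-1}$ according to first and last letter. Writing each letter $g_{\pm1}^{-p}$ as the $A$-word $(g_{\pm1}^{-1})^p$ expresses every reduced $B$-word as an ordinary word over the slow alphabet $A$; this is the ``exact relation'' to which the discussion before the statement defers, and it is what makes Lemma~\ref{lem:allcontracts}, which is phrased over $A^\ast$, applicable here.

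The two components acquire different maximal domains because Lemma~\ref{lem:allcontracts} is stated for two regions: the set $\mc U$, bounded away from $(-\infty,0]$, and the set $\mc V$, bounded away from $(-\infty,-1/\ell]$. For a target point $z_0\in\C^\ast_R$ I would apply the lemma to a bounded neighborhood of $z_0$ in the role of $\mc U$, and for $z_0\in\C^\ast_\ell$ to a neighborhood in the role of $\mc V$. In both cases the finiteness assertions (``only finitely many $g$ fail to map the region into $\mc U_0$ resp.\ $\mc U_{-1}$'') let me split the iterated sum into finitely many already-defined terms plus a tail, while the invariance assertions keep the tail inside $\mc U_0$ resp.\ $\mc U_{-1}$, where $f$ is known to be holomorphic. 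The sub-families excluding a final letter $1$ or $-1$ are exactly those governing whether a word still maps $\mc V$ into $\mc U_0$ or $\mc U_{-1}$, and this is precisely what pushes the domain of $f_{-1}$ out to $\C^\ast_\ell$ while leaving that of $f_0$ at $\C^\ast_R$.

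The hard part will be the convergence of the infinite sums on these enlarged domains. For $\Rea s>\tfrac12$ I expect locally uniform convergence to follow from the half-plane invariance in Lemma~\ref{lem:allcontracts} together with the geometric decay of the cocycle factors along the parabolic orbit $\{g_{-1}^n\}$, which simultaneously shows that the tails vanish, so that the iterated right-hand side genuinely reproduces $f$; holomorphy of the limit is then automatic. For general $s\in\C$ off the poles in $\tfrac12(1-\N_0)$ I would invoke the meromorphic continuation of $\TO_{-1,s}^\fast$ from Theorem~\ref{fast_props} and Section~\ref{sec:meromorphic} and conclude by uniqueness of analytic continuation in both $z$ and $s$. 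Independence of the extension from the choice of $n$ follows, as in Proposition~\ref{prop:slow_extension}, by expanding $(\TO_s^{\fast,\pm})^m=(\TO_s^{\fast,\pm})^n(\TO_s^{\fast,\pm})^{m-n}$ and reducing the resulting $B$-words, and a one-step version of the same computation yields the functional equation \eqref{fast_fe} on the full domains. The case of $\alpha_s^{(2)}$ is entirely parallel, with the cuts $(-\infty,0]$ and $(-\infty,-1/\ell]$ replacing the two half-planes.
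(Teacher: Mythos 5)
Your proposal follows essentially the same route as the paper's proof: reduce to $\Rea s>\tfrac12$ (handling general $s$ by meromorphic continuation), expand $(\TO_s^{\fast,\pm})^n$ over the reduced-word families $B^n_{0}$, $B^n_{-1}$, etc., translate these into words over the slow alphabet so that Lemma~\ref{lem:allcontracts} applies (with $\mc U$ governing the extension of $f_{0}$ to $\C^\ast_R$ and $\mc V$ that of $f_{-1}$ to $\C^\ast_\ell$), define the extension by an $n$-th iterate for $n$ large, and verify independence of $n$ and the functional equation exactly as in Proposition~\ref{prop:slow_extension}. The only slip is your appeal to ``geometric decay'' of the cocycle factors along the parabolic orbit $\{g_{-1}^n\}$: the decay there is polynomial, of order $n^{-2\Rea s}$, which is precisely why the convergence threshold is $\Rea s>\tfrac12$; this does not affect the validity of the argument.
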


\begin{proof}
It suffices to show the statement for $\Rea s > 1/2$. We only provide the 
proof for $\alpha_s^{(1)}$ as the consideration of $\alpha_s^{(2)}$ is 
analogous. We note that $\C^\ast_R \times \C^\ast_\ell$ is the maximal domain of 
holomorphy that contains $D_{0} \times  D_{-1}$ and on which all arising 
cocycles are simultaneously well-defined and holomorphic. 

For $n\in \N_0$ we have (see \cite[Lemma~5.2]{Pohl_spectral_hecke}; note that the notation here is different and that the $Q$-contributions are handled in a different, though equivalent, way; alternatively it follows from \cite[Proof of Proposition~4.11]{Moeller_Pohl} where one still needs to perform the passage from the transfer operator for $\Gamma$ to the pair of transfer operators for $\wt\Gamma$ as in \cite[Proposition~4.15]{Moeller_Pohl})

{\scalebox{.88}
{
\begin{minipage}{\the\textwidth}
\begin{align*}
\left(\TO_s^{\fast,\pm}\right)^n 
= 
\begin{pmatrix}
(1\pm \alpha_s(Q)) \sum\limits_{\scalebox{\verklein}{$b\in B^n_{0}$}} 
\alpha_s(b^{-1}) & (1\pm\alpha_s(Q)) \sum\limits_{{\scalebox{\verklein}{$b\in 
B^n_{-1}$}}}\alpha_s(b^{-1})
\\[4mm]
\sum\limits_{\scalebox{\verklein}{$b\in B^n_{0}\setminus 
B^n_{(0,-1)}$}}\hspace*{\verschieb} \alpha_s(b^{-1}) \pm \hspace*{\verschiebt} 
\sum\limits_{\scalebox{\verklein}{$b\in B^n_{0}\setminus 
B^n_{(0,1)}$}}\hspace*{\verschieb} \alpha_s(Qb^{-1}) 
& 
\sum\limits_{\scalebox{\verklein}{$b\in B^n_{-1}\setminus 
B^n_{(-1,-1)}$}}\hspace*{\verschieb}\alpha_s(b^{-1}) \pm \hspace*{\verschiebt} 
\sum\limits_{\scalebox{\verklein}{$b\in B^n_{-1}\setminus 
B^n_{(-1,1)}$}}\hspace*{\verschieb}\alpha_s(Qb^{-1})
\end{pmatrix}.
\end{align*}
\end{minipage}
}
}

Let $(z_0,w_0)\in \C^\ast_R\times \C^\ast_\ell$ and pick open bounded 
neighborhoods $\mc U$ of $z_0$ in $\C^\ast_R$ and $\mc V$ of $w_0$ in 
$\C^\ast_\ell$. Further, for $j\in\{-1,0\}$, let $\mc D_j$ be open complex 
neighborhoods of $\overline{D_j}$ such that $f\in B(\mc D_0) \oplus B(\mc 
D_{-1})$. Note that $\mc D_j$ is a rounded neighborhood of $D_j$ for $j\in\{-1,0\}$.

By Lemma~\ref{lem:allcontracts} there exists $n_0\in\N$ such that for $n\geq 
n_0$ we have 
\[
 g.\mc U \subseteq \mc D_{0}\quad\text{and}\quad gQ.\mc U \subseteq \mc D_{0}
\]
for all $g\in B^n_{0}$, and
\[
 g.\mc V \subseteq \mc D_{-1}\quad\text{and}\quad gQ.\mc V \subseteq \mc D_{-1}
\]
for all $g\in B^n_{-1}$. We fix $n\geq n_0$ and define
\begin{equation}\label{def_ext2}
\begin{pmatrix}
  f_{0} \\ f_{-1}
\end{pmatrix}
\sceq\left(\TO_s^{\fast,\pm}\right)^n
\begin{pmatrix}
  f_{0} \\ f_{-1}
\end{pmatrix}
\end{equation}
on $\mc U\times \mc V$. As in the proof of Proposition~\ref{prop:slow_extension} 
we see that the left hand side of \eqref{def_ext1} is well-defined and defines a 
holomorphic function vector that satisfies \eqref{fast_fe} on $\mc U \times \mc 
V$.
\end{proof}

\subsubsection{A crucial identity}\label{sec:crucial}
In this section we show that
\[
 \TO_{-1,s}^\fast f_{-1} = \alpha_s(g_{-1})\varphi \quad\text{on $\R_{>0}$}
\]
whenever $f=(f_0,f_{-1})^\top \in \FFE_s^{\pm}$ is given and $\varphi$ is 
defined by \eqref{ftophi}, or $\varphi\in \SFE^{\omega,\as,\pm}_s$ is given and 
$f$ is defined by \eqref{phitof}. More precisely, we show that 
\begin{equation}\label{identity_1}
 \alpha_s(g_{-1})\circ\left(1+\TO_{-1,s}^\fast\right)f_{-1} = 
\TO_{-1,s}^\fast f_{-1}
\end{equation}
and 
\begin{equation}\label{identity_2}
 \TO_{-1,s}^\fast\circ\big( 1- \alpha_s(g_{-1})\big)\varphi = \alpha_s(g_{-1})\varphi
\end{equation}
on $\R_{>0}$. Furthermore we provide regularity properties which allow us to determine the spaces between which \eqref{ftophi} and \eqref{phitof} establish isomorphisms.

A crucial tool for these investigations are asymptotics of the Lerch zeta function $\zeta(s,a,x)$ (see Section~\ref{sec:meromorphic}) for large values of $x$. Since we consider it here for $x>0$ only, we have $\alpha_s=\alpha_s^{(1)}=\alpha_s^{(2)}$ and thus do not need to distinguish between the two variants of the (meromorphically continued) Lerch zeta function. Its asymptotic expansion for $x\to\infty$ is 
\begin{equation}\label{lerch_exp}
 \zeta(s,a,x) \sim \sum_{n=-1}^\infty D_n x^{-(s+n)}
\end{equation}
for certain coefficients $D_n\in\C$, $n\in\Z_{\geq-1}$, depending on $s$ and $a$  
with $D_{-1}=0$ if $a\notin\Z$ \cite{Katsurada}. The precise (numerical) expressions for all 
$D_n$ are known \cite{Katsurada} but they are not of importance to us.

\begin{prop}\label{prop:seq}
Let $s\in\C$ and $f=(f_0,f_{-1})^\top\in \FFE_s^\pm$. Then
\begin{enumerate}[{\rm (i)}]
\item\label{seq1} $\alpha_s(g_{-1})\circ\left(1+\TO_{-1,s}^\fast\right)f_{-1} = 
\TO_{-1,s}^\fast f_{-1}$ on $\R_{>0}$.
\item\label{seq2} $\left(1+\TO_{-1,s}^\fast\right)f_{-1}(x) = \frac{c}{x} + 
O_{x\to0^+}(1)$ for some $c=c(s,f)\in V$. Moreover, $\pr_r(c) = 0$.
\end{enumerate}
\end{prop}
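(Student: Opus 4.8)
The plan is to treat the two parts separately, since part~(i), i.e.\ \eqref{identity_1}, is essentially a reindexing identity, while part~(ii) requires a genuine asymptotic analysis at the parabolic fixed point $0$ of $g_{-1}$.

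For part~(i) I would first work in the range $\Rea s>\tfrac12$, where $\TO_{-1,s}^\fast=\sum_{n\ge 1}\alpha_s(g_{-1}^n)$ is the convergent operator of the form~\eqref{gen_op}; on $\R_{>0}$ all points $g_{-1}^{-n}.x$ stay in $(0,1/\ell)$ and the cocycles decay like $n^{-2\Rea s}$, so every sum converges absolutely and $f_{-1}$, holomorphic on $\R_{>0}$ by Proposition~\ref{prop:fast_extension}, may be inserted. Using the functoriality $\alpha_s(g_{-1})\alpha_s(g_{-1}^n)=\alpha_s(g_{-1}^{n+1})$ together with $\alpha_s(g_{-1}^0)=\id$, a shift of the summation index gives
\[
 \alpha_s(g_{-1})\bigl(1+\TO_{-1,s}^\fast\bigr)=\sum_{n\ge 0}\alpha_s(g_{-1}^{n+1})=\sum_{m\ge 1}\alpha_s(g_{-1}^{m})=\TO_{-1,s}^\fast
\]
as operators, which applied to $f_{-1}$ is exactly \eqref{identity_1}. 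Both sides are meromorphic in $s$ (the factor $\alpha_s(g_{-1})$ is entire and $\TO_{-1,s}^\fast$ continues meromorphically by Theorem~\ref{fast_props}), so the identity persists for all admissible $s$ by the identity theorem in $s$.

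For part~(ii) the main input is the asymptotic expansion \eqref{lerch_exp} of the Lerch zeta function. Writing out the cocycle of the parabolic $g_{-1}$, which fixes $0$, one has for $x>0$
\[
 \bigl(1+\TO_{-1,s}^\fast\bigr)f_{-1}(x)=f_{-1}(x)+\sum_{n\ge 1}(n\ell x+1)^{-2s}\,\chi(g_{-1})^n\,f_{-1}\!\left(\tfrac{x}{n\ell x+1}\right).
\]
The first summand is bounded as $x\to 0^+$ because $f_{-1}$ is holomorphic near $0$. For the series I would split $f_{-1}=f_{-1}(0)+\bigl(f_{-1}-f_{-1}(0)\bigr)$. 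The remainder part contributes $O_{x\to0^+}(1)$: since $f_{-1}(u)-f_{-1}(0)=O(u)$, its $n$-th term is $O\bigl(x(n\ell x+1)^{-2\Rea s-1}\bigr)$, and comparison with $\int_0^\infty(1+t)^{-2\Rea s-1}\,dt$ shows the sum stays bounded for every $\Rea s>0$. The leading part $\sum_{n\ge 1}(n\ell x+1)^{-2s}\chi(g_{-1})^n f_{-1}(0)$ I would analyze by diagonalizing the unitary $\chi(g_{-1})$: on the eigenspace for the eigenvalue $e^{2\pi i\theta}$ this series equals $(\ell x)^{-2s}\bigl[\zeta(2s,\theta,w)-w^{-2s}\bigr]$ with $w=1/(\ell x)\to\infty$. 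By \eqref{lerch_exp} the $D_0,D_1,\dots$ terms and the subtracted $w^{-2s}$ all contribute $O_{x\to0^+}(1)$, and the only more singular term is $D_{-1}w^{1-2s}$, whose contribution is $D_{-1}/(\ell x)$. Since $D_{-1}=0$ unless $\theta\in\Z$, i.e.\ unless the eigenvalue equals $1$, the $x^{-1}$-coefficient $c$ lies in the eigenvalue-$1$ eigenspace $E_1$ of $\chi(g_{-1})$ (in the notation of Remark~\ref{rem:stronger}); hence $\pr_r(c)=0$, which gives both assertions of part~(ii).

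The routine portions are the two convergence estimates and the bookkeeping of constants, of which I would use only $D_{-1}=0$ for $\theta\notin\Z$. The hard part will be making the term-by-term use of \eqref{lerch_exp} rigorous uniformly in $x\to 0^+$ and, above all, carrying the whole argument past $\Rea s=\tfrac12$ into the full range $\Rea s>0$, $s\neq\tfrac12$: there the defining series no longer converges, so I would instead observe that the remainder series converges and is holomorphic in $s$ throughout $\Rea s>0$, while the $f_{-1}(0)$-part is, by the very construction of $\TO_{-1,s}^\fast$ via the Lerch zeta function, the meromorphic continuation to which \eqref{lerch_exp} still applies; uniqueness of continuation in $s$ then transports the decomposition, and hence the asymptotics, to all admissible $s$. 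The exclusion of $s=\tfrac12$ reflects the pole of this continuation at $2s=1$, coming precisely from the $D_{-1}$-term in the $E_1$-directions.
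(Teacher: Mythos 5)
Your proof is correct, and for part~(ii) it is essentially the paper's argument: diagonalize the unitary $\chi(g_{-1})$, split off the value of $f_{-1}$ at $0$ (this is the paper's $Q_M/P_M$ decomposition with $M=1$), bound the remainder series directly for $\Rea s>0$, express the constant part through the Lerch zeta function in the variable $w=1/(\ell x)$, and read off the $x^{-1}$-coefficient from the expansion \eqref{lerch_exp}, with $D_{-1}=0$ for non-integral $a$ forcing $c$ into the eigenvalue-$1$ eigenspace of $\chi(g_{-1})$, i.e.\ $\pr_r(c)=0$. For part~(i) you take a mildly different route: the paper carries the same $Q_M/P_M$ decomposition through and verifies the shift identity for the meromorphically continued operator directly, using the recurrence $\zeta(\sigma,a,w)=w^{-\sigma}+e^{2\pi i a}\,\zeta(\sigma,a,w+1)$ of the continued Lerch function (so all $s$ are treated at once), whereas you reindex the absolutely convergent series for $\Rea s>\tfrac12$ and then invoke the identity theorem in $s$. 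Your route is legitimate --- the paper itself uses precisely this continuation-in-$s$ argument for the same identity in the proof of Proposition~\ref{prop:ae_generic} --- but you should state it as an operator identity tested against a fixed, $s$-independent function $h$ before substituting the $s$-dependent eigenfunction $f_{-1}$; as written, the sentence invoking meromorphy could be misread as continuing the $f_{-1}$-dependent expression in $s$, which would be vacuous since $f_{-1}$ is only given for the one value of $s$ under consideration.
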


\begin{proof}
To simplify notation, we set  $\TO_s \sceq \TO_{-1,s}^\fast$. We start with a 
diagonalization. Since $\chi(g_{-1})$ is a unitary operator on $V$, there 
exists an orthonormal basis of $V$ with respect to which $\chi(g_{-1})$ is 
represented by a unitary diagonal matrix, say
\[
 \diag\left(e^{2\pi i a_1},\ldots, e^{2\pi i a_d} \right)
\]
with $a_1,\ldots, a_d\in \R$ and $d=\dim V$. We use the same basis of $V$ to 
represent any function $\psi\colon D \to V$ (here, $D$ is any domain that arises 
in our considerations) as a vector of component functions
\[
 \begin{pmatrix} \psi_1\\ \vdots \\ \psi_d\end{pmatrix} \colon D\to \C^d.
\]
For $s\in\C$, $g\in G$, any subset $I$ of $\R$ and any function $f\colon I\to 
\C$ we set
\begin{equation}\label{def_taus}
 \tau_s(g^{-1})f(x) \sceq |g'(x)|^s f(g.x),
\end{equation}
whenever it makes sense. Then, in these coordinates for $V$ and for $\Rea s > 
\tfrac12$, the operator $\TO_s$ acts as
\[
 \diag\left( \sum_{n\in\N} e^{2\pi i n a_1} \tau_s(g_{-1}^n),\ldots, 
\sum_{n\in\N} e^{2\pi i n a_d}\tau_s(g_{-1}^n) \right).
\]
We now consider a single component. Let $a\in\R$ and, by a slight abuse of 
notation, set
\[
 \alpha_s(g_{-1}) \sceq \alpha_s^\C(g_{-1}) \sceq e^{2\pi i a}\tau_s(g_{-1}).
\]
For $\Rea s > \tfrac12$ let
\begin{equation}\label{Ls}
 L_s \sceq \sum_{n\in\N} \alpha_s(g_{-1}^n)= \sum_{n\in\N} e^{2\pi i 
na}\tau_s(g_{-1}^n),
\end{equation}
and let $h$ be a real-analytic complex-valued function that is defined in some 
neighborhood of $0$. For $k\in\N_0$ let
\begin{equation}\label{taylor}
 c_k \sceq \frac{h^{(k)}(0)}{k!\ell^k} \quad\text{and}\quad h_k(x) \sceq c_k \ell^k x^k = \frac{h^{(k)}(0)}{k!}x^k.
\end{equation}
Let $M\in\N_0$. In order to state $L_s$'s meromorphic continuation to $\Rea s > 
(1-M)/2$ we define
\[
 P_M(h)(x) \sceq h(x) - \sum_{k=0}^{M-1} h_k(x)
\]
and $Q_M\sceq 1 - P_M$. Then
\[
 L_s = L_s \circ Q_M + L_s \circ P_M,
\]
where $L_s\circ P_M$ converges for $\Rea s >(1-M)/2$ and the meromorphic 
continuation of $L_s\circ Q_M$ is given by
\[
 \left(L_s\circ Q_M\right)h\colon x \mapsto \frac{e^{2\pi i a}}{(\ell 
x)^{2s}}\sum_{k=0}^{M-1} c_k \zeta\left(2s+k, a, 1+\frac{1}{\ell x}\right).
\]
For the proof of \eqref{seq1} note that 
\[
 \left(\alpha_s(g_{-1})\circ L_s \circ Q_M\right)h(x) = \frac{e^{2\pi i 
2a}}{(\ell x)^{2s}}\sum_{k=0}^{M-1} c_k \zeta\left(2s+k, a, 2+\frac{1}{\ell 
x}\right)
\]
and
\begin{align*}
\left( \alpha_s(g_{-1})\circ L_s \circ P_M\right)h &= L_s \circ P_M h+ L_s \circ 
Q_M h - \alpha_s(g_{-1})P_Mh - L_s\circ Q_M h.
\end{align*}
Thus,
\begin{align*}
\alpha_s(g_{-1}) L_s h (x) & = \alpha_s(g_{-1})L_s P_M h(x) + 
\alpha_s(g_{-1})L_s Q_M h(x)
\\
& = L_s h(x) - \alpha_s(g_{-1})h(x) + \sum_{k=0}^{M-1} \frac{c_k e^{2\pi i 
a}}{(\ell x)^{2s}} \Bigg[ \left( 1 + \frac{1}{\ell x}\right)^{-(2s+k)}
\\
& \qquad- \zeta\left(2s+k, a, 1+\frac{1}{\ell x}\right) + e^{2\pi i 
a}\zeta\left(2s+k,a,2+\frac{1}{\ell x}\right) \Bigg]
\\
& =  L_s h(x) - \alpha_s(g_{-1})h(x).
\end{align*}
This proves \eqref{seq1}.

For \eqref{seq2} we claim that there exists an asymptotic expansion of the form
\begin{align}\label{asympexp2}
 (1+L_s) h(x) &\sim  \sum_{p=-1}^\infty c_p^* x^p \qquad\text{as $x\to 0^+$}
\end{align}
with complex coefficients $c_p^*$ (depending on $s,a,h$) for $p\in\Z_{\geq -1}$ such that $c_{-1}^*=0$ if 
$a\notin\Z$. Then \eqref{seq2} immediately follows from \eqref{asympexp2}.

Let 
\[
 K_s\sceq 1+L_s
\]
and recall that \eqref{asympexp2} means by definition that for each $P\in\Z_{\geq -1}$ we have
\[
 K_sh(x) - \sum_{p=-1}^P c_p^*x^p = o\big(x^P\big) \qquad\text{as $x\to 0^+$}.
\]
In order to establish \eqref{asympexp2} let $P\in\Z_{\geq -1}$, pick $M\in\N_0, M\geq P+2$ such that $\Rea s > (1-M)/2$, and consider the splitting 
\begin{align*}
 K_sh(x) & = (K_s\circ Q_M)h(x) + (K_s\circ P_M)h(x).
\end{align*} 
In the following we first prove that 
\begin{equation}\label{restsmall}
 (K_s\circ P_M)h(x) = o\big(x^{M-2}\big) \qquad\text{as $x\to0^+$.}
\end{equation}
Then we show that $(K_s\circ Q_M)h(x)$ has an asymptotic expansion of the form \eqref{asympexp2}, and that its first $P+1$ coefficients (that is, those for the terms $x^{-1}, \ldots, x^{P}$) do not depend on the choice of $M$. These two results immediately imply \eqref{asympexp2}. Their proofs even provide an exact formula for the coefficients in the asymptotic expansion, see \eqref{coeff} below. 

We first note that \eqref{lerch_exp} implies for each $k\in\N_0$ the asymptotic expansion (recall $c_k$ from \eqref{taylor})
\begin{equation}\label{lerch_asymp2}
 (\ell x)^{-2s}c_k\zeta\left(2s+k,a,\frac{1}{\ell x}\right) \sim \sum_{n=-1}^\infty D_n(k)x^{k+n} \qquad\text{as $x\to 0^+$} 
\end{equation}
for certain coefficients $D_n(k)\in\C$, $n\in\Z_{\geq -1}$, depending on $s$ and $a$, and with $D_{-1}(k)=0$ if $a\notin\Z$. In particular, 
\begin{equation}\label{growth2}
 (\ell x)^{-2s}c_k\zeta\left(2s+k,a,\frac{1}{\ell x}\right) = o\big( x^{k-2} \big) \qquad\text{as $x\to0^+$.}
\end{equation}

In order to show \eqref{restsmall}, we recall that the Taylor formula with Lagrange remainder term yields that for each $n\in\N_0$ and $x>0$ there exist vectors
\[
\xi_R(x,n)=\xi_R(x,n,M),\ \xi_I(x,n)=\xi_I(x,n,M)\in \left( 0, \frac{x}{n\ell x+1}\right)^{\dim V}
\]
such that 
\[
 (P_Mh)\left( \frac{x}{n\ell x +1}\right) = \frac{\Rea h^{(M)}\big( \xi_R(x,n) \big) + i\Ima h^{(M)}\big(\xi_I(x,n)\big)}{M!} \cdot \left( \frac{x}{n\ell x +1}\right)^M.
\]
Thus, 
\begin{align*}
 ( K_s\circ P_M)h(x) &= \sum_{n=0}^\infty \frac{e^{2\pi i n a}}{(n\ell x +1)^{2s}} (P_Mh)\left(\frac{x}{n\ell x +1}\right)
 \\
 & =  (\ell x)^{-2s}  \sum_{n=0}^\infty e^{2\pi i na} \left( n+ \frac{1}{\ell x}\right)^{-(2s+M)} \cdot c(x,n),
\end{align*}
where 
\[
 c(x,n)\sceq \frac{\Rea h^{(M)}\big( \xi_R(x,n)\big) + i\Ima h^{(M)}\big(\xi_I(x,n)\big)}{M!}.
\]
Since $\xi_R(x,n)$ and $\xi_I(x,n)$ are bounded uniformly in $x$ and $n$, so is $c(x,n)$. It follows that 
\begin{align*}
  \left|( K_s\circ P_M)h(x)\right| & \ll_M (\ell x)^{-2\Rea s} \sum_{n=0}^\infty \left( n+ \frac{1}{\ell x}\right)^{-(2\Rea s+M)} 
  \\
  & \quad = (\ell x)^{-2\Rea s} \zeta\left( 2\Rea s + M, 0, \frac{1}{\ell x}\right) 
\end{align*}
for all $x>0$, with implied constant independent of $x$. Thus, \eqref{growth2} implies \eqref{restsmall}.

We now investigate $(K_s\circ Q_M)h(x)$. For all $x>0$ we have
\begin{align*}
(K_s\circ Q_M)h(x)  & = (\ell x)^{-2s} \sum_{k=0}^{M-1} c_k \zeta\left(2s+k,a,\frac{1}{\ell x}\right).
\end{align*}
Thus, it follows from \eqref{lerch_asymp2} that $(K_s\circ Q_M)h(x)$ has the asymptotic expansion 
\begin{align}\label{part_exp}
(K_s\circ Q_M)h(x)  &  \sim \sum_{k=0}^{M-1} \sum_{n=-1}^\infty D_n(k) (\ell x)^{k+n} 
 = \sum_{p=-1}^\infty \wt c_p(M) x^p 
\end{align}
as $x\to0^+$, where 
\begin{equation}\label{specialcoeff}
 \wt c_p(M) \sceq  \ell^p \sum_{k=0}^{M-1}\sum_{n=-1}^\infty \delta_{k+n,p} D_n(k) 
\end{equation}
for all $p\in\Z_{\geq -1}$. Here, 
\[
 \delta_{q,p} \sceq 
 \begin{cases}
  1 & \text{if $p=q$}
  \\
  0 & \text{if $p\not=q$}
 \end{cases}
\]
denotes the Dirac $\delta$-function. Note that for each $p\in\Z_{\geq -1}$, the series in \eqref{coeff} has only finitely many non-vanishing summands, and hence it is indeed a finite sum.

If $a\notin\Z$ then 
\[
 \wt c_{-1}(M) = D_{-1}(0) = 0.
\]
Thus, the asymptotic expansion \eqref{part_exp} is indeed of the form \eqref{asympexp2}. Further, \eqref{specialcoeff} shows that for $p\leq P$ we have
\[
\wt c_p(M) = \ell^p \sum_{q=-1}^{p} D_q(p-q),
\]
which is indeed independent of the choice of $M\geq P+2$.

This completes the proof of the existence of the asymptotic expansion \eqref{asympexp2}, and it furthermore shows that for $p\in\Z_{\geq -1}$ the coefficient $c_p^*$ is given by
\begin{equation}\label{coeff}
 c_p^* \sceq \sum_{k=0}^\infty \sum_{n=-1}^\infty \delta_{k+n,p} D_n(k).\qedhere
\end{equation}
\end{proof}

\begin{prop}\label{prop:ae_generic}
Let $s\in\C$ and $\varphi\in \SFE_s^{\omega,\pm}$. Set
\begin{align}\label{defpsi}
 \psi&\sceq \big(1-\alpha_s(g_{-1})\big)\varphi = \TO_s^{\slow,\pm}\varphi - 
\alpha_s(g_{-1})\varphi 
\\
& \ = \left( \big(1\pm\alpha_s(Q)\big)\sum_{k=2}^m \alpha_s(g_{-k}) \pm \alpha_s(Qg_{-1}) \right)\varphi \nonumber
\end{align}
Then 
\[
 \Phi\sceq \Phi_{s,\varphi} \sceq \alpha_s(g_{-1})\varphi - \TO_{-1,s}^{\fast}\psi \colon \R_{>0}\to V
\]
is a real-analytic $\alpha_s(g_{-1})$-invariant function. Further, $\varphi$ has 
an asymptotic expansion of the form
\begin{equation}\label{asymexp}
 \varphi(x) \sim \Phi(x) + \sum_{n=-1}^\infty C^*_nx^n \qquad\text{as $x\to 0^+$}
\end{equation}
for certain (unique) coefficients $C^*_n\in V$, $n\in\Z_{\geq -1}$. Moreover, 
$\pr_r(C^*_{-1})=0$.
\end{prop}

\begin{proof}
Obviously, $\psi$ extends real-analytically to some neighborhood of $0$, and hence $\Phi$ is real-analytic. We start by showing that $\Phi$ is 
$\alpha_s(g_{-1})$-invariant. To that end let $f$ be an arbitrary function which 
is smooth in a neighborhood of $0$. To simplify notation, we set 
\[
 \TO_s \sceq \TO_{-1,s}^\fast.
\]
For $\Rea s >\tfrac12$ we have
\begin{equation}\label{shiftLs}
\alpha_s(g_{-1})\TO_s f  = \TO_s f - \alpha_s(g_{-1})f.
\end{equation}
Since $f$ is arbitrary (hence, in particular, independent of $s$), meromorphic 
continuation in $s$ shows that \eqref{shiftLs} holds for all 
$s\in\C\setminus\{\text{poles}\}$. Thus, applying \eqref{shiftLs} with $f=\psi$ 
and recalling \eqref{defpsi} yields 
\begin{align*}
 \alpha_s(g_{-1})\Phi & = \alpha_s(g_{-1}^2)\varphi - \alpha_s(g_{-1})\TO_s\psi 
 \\
 & = \alpha_s(g_{-1}^2)\varphi - \TO_s\psi + \alpha_s(g_{-1})\psi
 \\
 & = \alpha_s(g_{-1}^2)\varphi - \TO_s\psi + \alpha_s(g_{-1})\varphi - 
\alpha_s(g_{-1}^2)\varphi
 \\
 & = - \TO_s\psi + \alpha_s(g_{-1})\varphi 
 \\
 & = \Phi.
\end{align*}
Hence, $\Phi$ is $\alpha_s(g_{-1})$-invariant.

For the asymptotic expansion we note that 
\begin{equation}\label{phi_eq}
 \varphi = \Phi + \psi + \TO_s\psi.
\end{equation}
From
\[
 \psi = \big(1\pm \alpha_s(Q)\big)\sum_{k=2}^m\alpha_s(g_{-k})\varphi \pm \alpha_s(Qg_{-1})\varphi
\]
and the fact that for $k\in\{2,\ldots,m\}$ the elements $g_{-1}^{-1}Q, g_{-k}^{-1}, g_{-k}^{-1}Q$ map (small) neighborhoods of $0$ away from $0$ it follows that $\psi$ extends to a real-analytic function in a neighborhood of $0$. As in the proof of Proposition~\ref{prop:seq} we find that the asymptotic 
expansion of $\psi+\TO_s\psi$ for $x\to0^+$ is of the claimed form.
\end{proof}

\begin{lemma}\label{lem:Q0}
Let $s\in\C$ and $\varphi\in\SFE_s^{\omega,\pm}$, and let $\Phi=\Phi_{s,\varphi}$ be as in Proposition~\ref{prop:ae_generic}. Then we 
have
\begin{enumerate}[{\rm (i)}]
\item\label{asymp0} If $\Rea s > \tfrac12$ and $\varphi=o_{x\to0^+}(x^{-2s})$ then $\Phi_{s,\varphi}=0$.
\item\label{asymp1} $\Phi_{s,\varphi}(x) = O_{x\to0^+}(x^{-2s})$.
\item\label{asymp2} If $\Phi_{s,\varphi}(x) = o_{x\to0^+}(x^{-2s})$ then $\Phi_{s,\varphi}=0$.
\item\label{asymp3} Let $\tfrac12\geq\Rea s>0$, $s\not=\tfrac12$. If for some $c\in V$,
\begin{equation}\label{Q0_asymp}
 \Phi(x) = \frac{c}{x} + O(1) \qquad\text{as $x\to0^+$} 
\end{equation}
then $c=0$.
\end{enumerate}
\end{lemma}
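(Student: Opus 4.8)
The engine behind all four parts is the rigidity forced on $Q_0$ by its $\alpha_s(g_{-1})$-invariance (Proposition~\ref{prop:ae_generic}) together with the fact that $g_{-1}$ is parabolic with fixed point $0$: in $\PSL_2(\R)$ one has $g_{-1}^{-1}=\textbmat{1}{0}{\ell}{1}$, so $g_{-1}^{-1}.x=x/(\ell x+1)$ and the conjugation $x\mapsto u\sceq 1/x$ turns $g_{-1}^{-1}$ into the translation $u\mapsto u+\ell$. The plan is first to make this explicit. Diagonalizing the unitary $\chi(g_{-1})$ as in the proof of Proposition~\ref{prop:seq}, write $Q_0$ in components $Q_{0,j}$ on which $\chi(g_{-1})$ acts by $e^{2\pi i a_j}$ with $a_j\in[0,1)$. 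On $\R_{>0}$ the invariance reads
\[
 Q_{0,j}(x)=(\ell x+1)^{-2s}\,e^{2\pi i a_j}\,Q_{0,j}\!\left(\tfrac{x}{\ell x+1}\right),
\]
and in the coordinate $u=1/x$ a direct computation shows that $P_j(u)\sceq u^{-2s}e^{2\pi i a_j u/\ell}Q_{0,j}(1/u)$ is $\ell$-periodic. As $Q_0$ is real-analytic on $\R_{>0}$, each $P_j$ is real-analytic and $\ell$-periodic on $(0,\infty)$, hence bounded, and we obtain the normal form
\[
 Q_{0,j}(x)=x^{-2s}\,e^{-2\pi i a_j/(\ell x)}\,P_j(1/x),\qquad P_j\ \ell\text{-periodic and bounded.}
\]

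From this normal form, parts (ii) and (iii) are immediate. For part (ii), $|e^{-2\pi i a_j/(\ell x)}|=1$, so $|Q_{0,j}(x)/x^{-2s}|=|P_j(1/x)|\le\sup|P_j|<\infty$, giving $Q_0(x)=O_{x\to0^+}(x^{-2s})$. For part (iii), $Q_0=o_{x\to0^+}(x^{-2s})$ forces $|P_j(u)|\to0$ as $u\to\infty$; an $\ell$-periodic function with this property vanishes, so every $Q_{0,j}\equiv0$ and $Q_0=0$.

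For part (i) I would argue directly. For $\Rea s>\tfrac12$ the operator $\TO_{-1,s}^\fast=\sum_{n\in\N}\alpha_s(g_{-1}^n)$ is a convergent series, so applying it to $\psi=(1-\alpha_s(g_{-1}))\varphi$ telescopes on $\R_{>0}$:
\[
 \TO_{-1,s}^\fast\psi=\sum_{n\in\N}\big(\alpha_s(g_{-1}^n)-\alpha_s(g_{-1}^{n+1})\big)\varphi=\alpha_s(g_{-1})\varphi-\lim_{N\to\infty}\alpha_s(g_{-1}^{N+1})\varphi.
\]
Writing $\alpha_s(g_{-1}^{N+1})\varphi(x)=((N+1)\ell x+1)^{-2s}\chi(g_{-1}^{N+1})\varphi(y)$ with $y=x/((N+1)\ell x+1)\to0$, the hypothesis $\varphi=o(x^{-2s})$ gives $((N+1)\ell x+1)^{-2s}\varphi(y)=x^{-2s}\big(y^{2s}\varphi(y)\big)\to0$, and unitarity of $\chi(g_{-1}^{N+1})$ preserves this; hence the limit vanishes and $Q_0=\alpha_s(g_{-1})\varphi-\TO_{-1,s}^\fast\psi=0$. (Alternatively, part~(i) follows from parts (ii) and (iii) together with the expansion $\varphi\sim Q_0+\sum_{n\ge-1}C_n^*x^n$ of Proposition~\ref{prop:ae_generic}, since for $\Rea s>\tfrac12$ one has $\varphi-Q_0=O(x^{-1})=o(x^{-2s})$, whence $Q_0=o(x^{-2s})$.)

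Part (iv) is the delicate one. The hypothesis is equivalent to $xQ_0(x)\to c$, i.e.\ $xQ_{0,j}(x)\to c_j$ for each $j$. If $\tfrac12>\Rea s>0$ this is automatic from part~(ii): $xQ_0(x)=O(x^{1-2\Rea s})\to0$, so $c=0$. The real work is the critical line $s=\tfrac12+it$ with $t\neq0$, where the scalar factor has modulus $1$ and one must exploit genuine oscillation. Here, with $u=1/x$, set $F_j(u)\sceq xQ_{0,j}(x)=u^{2it}e^{-2\pi i a_j u/\ell}P_j(u)$, so $F_j(u)\to c_j$. Over one period, $F_j(u+\ell)/F_j(u)=\big(\tfrac{u+\ell}{u}\big)^{2it}e^{-2\pi i a_j}\to e^{-2\pi i a_j}$ as $u\to\infty$; if $c_j\neq0$ the values are eventually nonzero and this ratio tends to $1$, forcing $a_j=0$. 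It remains to rule out $c_j\neq0$ when $a_j=0$: then $F_j(u)=u^{2it}P_j(u)$, and along $u_n=u_0+n\ell$ periodicity gives $u_n^{2it}P_j(u_0)\to c_j$; since $t\neq0$ the phases $2t\log u_n$ increase to $\infty$ with increments tending to $0$, so $\{u_n^{2it}\}$ has distinct accumulation points on the unit circle and cannot converge unless $P_j(u_0)=0$. As $u_0$ ranges over a period this yields $P_j\equiv0$, hence $Q_{0,j}\equiv0$ and $c_j=0$; therefore $c=0$. I expect this critical-line case to be the main obstacle: away from it everything is governed by a single power of $x$, but at $\Rea s=\tfrac12$ the argument must use the non-convergence produced by $t\neq0$ and the representation datum $a_j$, which is exactly why the hypothesis $s\neq\tfrac12$ is needed.
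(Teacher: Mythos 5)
Your proof is correct and rests on the same mechanism as the paper's: conjugating the parabolic $g_{-1}^{-1}=\textbmat{1}{0}{\ell}{1}$ to the translation $u\mapsto u+\ell$ via $u=1/x$, deducing boundedness of the resulting (twisted) periodic function, and exploiting the non-convergence of $x^{it}$ on the critical line. The paper packages the change of variable as $\wt Q_0=\alpha_s(Q)Q_0$ and asserts its boundedness from $\alpha_s(g_1)$-invariance, then finishes part (iv) at $\Rea s=\tfrac12$ by showing $\limsup|x^{it}-(x+k\ell)^{it}|=2$; you instead diagonalize $\chi(g_{-1})$ and factor out the phase $e^{2\pi i a_j u/\ell}$ to obtain genuinely $\ell$-periodic scalar functions $P_j$, and close the critical-line case by the ratio argument $F_j(u+\ell)/F_j(u)\to e^{-2\pi i a_j}$ followed by the accumulation-point argument for $u_n^{2it}$. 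Your variant is, if anything, slightly more careful: the explicit diagonalization keeps track of the unitary twist coming from a nontrivial $\chi$, which the paper's periodicity step for $\wt Q_0$ leaves implicit, while the paper's limsup computation is marginally shorter; the mathematical content is otherwise the same, and parts (i)--(iii) coincide with the paper's telescoping and boundedness arguments.
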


\begin{proof}
For \eqref{asymp0} recall that, for $\Rea s > \tfrac12$, the operator 
$\TO_{-1,s}^\fast$ is given by \eqref{TOm1_def}. From the decay property of $\varphi$ it follows for all $x\in\R_{>0}$ that
\begin{align*}
\lim_{N\to\infty} \alpha_s\big(g_{-1}^N\big)\varphi(x)
= x^{-2s}\lim_{N\to\infty} \chi\big(g_{-1}^N\big) \left( \frac{x}{x\ell N +1}\right)^{2s} \varphi\left(\frac{x}{x\ell N+1}\right) = 0.
\end{align*}
Thus, $\TO_{-1,s}^\fast\varphi=\alpha_s(g_{-1})\varphi$, and hence $\Phi=0$.

The $\alpha_s(g_{-1})$-invariance of $\Phi$ easily implies \eqref{asymp2}. For 
\eqref{asymp1} and \eqref{asymp3} note that the map
\[
 \wt \Phi_{s,\varphi} \sceq \alpha_s(Q) \Phi_{s,\varphi} \colon (1,\infty) \to \C
\]
is a real-analytic $\alpha_s(g_1)$-invariant function (recall that $Qg_{-1}Q=g_1$). In particular, $\wt \Phi$ 
is bounded. Thus,
\[
 \Phi_{s,\varphi}(x) = \alpha_s(Q)\wt \Phi_{s,\varphi}(x) = x^{-2s} \wt \Phi_{s,\varphi}\left(\frac{1}{x}\right) \ll 
|x^{-2s}|.
\]
This proves \eqref{asymp1}. For \eqref{asymp3} note that \eqref{Q0_asymp} is 
equivalent to 
\begin{equation}\label{growth}
 \wt \Phi_{s,\varphi}(x) = cx^{1-2s} + O(x^{-2s}) \qquad\text{as $x\to\infty$.}
\end{equation}
Thus, for $\tfrac12>\Rea s > 0$ it follows that $\wt \Phi_{s,\varphi}$ is unbounded unless 
$c=0$. Hence the boundedness of $\wt \Phi_{s,\varphi}$ implies $c=0$. It remains to consider 
the case  $\Rea s =\tfrac12$. Let 
\[
 t\sceq -2\Ima s 
\]
and note that $t\not=0$. The $\alpha_s(g_1)$-invariance of $\wt \Phi_{s,\varphi}$ shows that 
for each $x\in (1,\infty)$ and $k\in\N$ we have
\begin{align*}
 |c| \left| x^{it} - (x+k\ell)^{it} \right| & \leq \left| \wt \Phi_{s,\varphi}(x) - 
cx^{it}\right| + \left| \wt \Phi_{s,\varphi}(x+k\ell) - c(x+k\ell)^{it}\right|.
\end{align*}
Thus, the growth condition~\eqref{growth} yields that 
\begin{equation}\label{growthconvergence}
|c| \left| x^{it} - (x+k\ell)^{it} \right| \to 0 \quad \text{as $x\to\infty$, 
$k\to\infty$.}
\end{equation}
We have
\begin{align*}
 \left| x^{it} - (x+k\ell)^{it} \right| = \left| \exp\left(-it\log 
\left(1+\frac{k}{x}\ell\right)\right) - 1  \right|.
\end{align*}
For all $k_0\in\N$, $x_0>1$,
\[
 \left\{ \frac{k}{x} \ \left\vert\ k\geq k_0,\ x\geq x_0 
\vphantom{\frac{k}{x}}\right.\right\} = (0,\infty).
\]
Hence, 
\[
 \limsup_{x\to\infty,k\to\infty} \left| \exp\left(-it\log 
\left(1+\frac{k}{x}\ell\right)\right) - 1  \right| = 2.
\]
In turn, the convergence \eqref{growthconvergence} is only possible for $c=0$. 
This completes the proof.
\end{proof}

\begin{cor}\label{cor:equal}
Let $s\in \C$, $\Rea s > 0$, $s\not=1/2$. Suppose that $\varphi\in 
\SFE_s^{\omega,\as,\pm}$ and define $\psi$ as in \eqref{defpsi}. Then 
\[
 \alpha_s(g_{-1}) \varphi = \TO_{-1,s}^\fast\psi
\]
on $\R_{>0}$.
\end{cor}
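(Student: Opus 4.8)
The plan is to reduce the claim to the single assertion $Q_0 = 0$, where $Q_0 = \alpha_s(g_{-1})\varphi - \TO_{-1,s}^\fast\psi$ is the function produced in Proposition~\ref{prop:ae_generic}; indeed the identity to be proved is literally $Q_0 = 0$ on $\R_{>0}$. From Proposition~\ref{prop:ae_generic} I take two inputs: that $Q_0$ is real-analytic and $\alpha_s(g_{-1})$-invariant, and that $\varphi$ has the asymptotic expansion $\varphi(x)\sim Q_0(x) + \sum_{n=-1}^\infty C_n^* x^n$ as $x\to 0^+$. The additional information coming from the hypothesis $\varphi\in\SFE_s^{\omega,\as,\pm}$ is the controlled growth $\varphi(x) = c/x + O_{x\to0^+}(1)$ for some $c\in V$, as prescribed in \eqref{def_sfe}. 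The proof then consists of converting this growth datum into a statement about $Q_0$ and invoking the estimates of Lemma~\ref{lem:Q0}, with a case split governed by the size of $\Rea s$.

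First I would treat $\Rea s > \tfrac12$. Here $2\Rea s > 1$, so comparing with $x^{-2s}$ gives $c/x = o(x^{-2s})$, and $\Rea s > 0$ gives $O(1) = o(x^{-2s})$; hence $\varphi = o(x^{-2s})$ as $x\to0^+$. Lemma~\ref{lem:Q0}\eqref{asymp0} then yields $Q_0 = 0$ immediately.

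For the complementary range $\tfrac12\geq\Rea s > 0$ with $s\neq\tfrac12$ I would first extract the growth of $Q_0$. Subtracting the integer-power part of the expansion in Proposition~\ref{prop:ae_generic} gives $Q_0(x) = \varphi(x) - C_{-1}^*/x + O_{x\to0^+}(1)$, and inserting $\varphi(x) = c/x + O_{x\to0^+}(1)$ produces $Q_0(x) = (c - C_{-1}^*)/x + O_{x\to0^+}(1)$. Lemma~\ref{lem:Q0}\eqref{asymp3} now forces $c - C_{-1}^* = 0$, so $Q_0(x) = O_{x\to0^+}(1)$; since $\Rea s > 0$ this is $o(x^{-2s})$, and Lemma~\ref{lem:Q0}\eqref{asymp2} gives $Q_0 = 0$. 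In both cases $Q_0 = 0$, which is precisely $\alpha_s(g_{-1})\varphi = \TO_{-1,s}^\fast\psi$ on $\R_{>0}$.

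The step needing the most care is the one just used in the second case: the reduction to $Q_0(x) = (c - C_{-1}^*)/x + O(1)$ followed by the appeal to Lemma~\ref{lem:Q0}\eqref{asymp3}. The genuine difficulty here sits at the boundary $\Rea s = \tfrac12$ with $\Ima s\neq 0$, where $Q_0 = O(x^{-2s})$ is exactly of order $x^{-1}$ and thus a priori indistinguishable from the $1/x$ term; this oscillation phenomenon, however, is already handled inside Lemma~\ref{lem:Q0}\eqref{asymp3}, so at this stage I only need the purely algebraic extraction of the $1/x$-coefficient, obtained by subtracting two asymptotic expansions, which is insensitive to $\Rea s$. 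Beyond that, the argument is routine $O$-/$o$-calculus; the refinement $\pr_r(C_{-1}^*) = 0$ from Proposition~\ref{prop:ae_generic} is not required for $Q_0 = 0$ but is what ties the present statement to the sharper asymptotics recorded in Remark~\ref{rem:stronger}.
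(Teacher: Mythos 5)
Your argument is correct and follows exactly the route the paper intends: its proof of Corollary~\ref{cor:equal} is the one-line remark that combining Lemma~\ref{lem:Q0} with the asymptotic expansion of Proposition~\ref{prop:ae_generic} and the assumed growth of $\varphi$ immediately gives $Q_0=0$, which is precisely the case split ($\Rea s>\tfrac12$ via \eqref{asymp0}; $0<\Rea s\le\tfrac12$, $s\ne\tfrac12$ via \eqref{asymp3} and then \eqref{asymp2}) that you spell out. No discrepancies to report.
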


\begin{proof}
The combination of Lemma~\ref{lem:Q0} with the asymptotic expansion for 
$\varphi$ from Proposition~\ref{prop:ae_generic} and the growth of 
$\varphi$ towards $0$ immediately yields a proof. 
\end{proof}

The proof of Corollary~\ref{cor:equal} also shows that the elements in $\SFE_s^{\omega,\as,\pm}$ satisfy a stronger condition for the asymptotics as $x\to 0^+$ than requested in their definition, see \eqref{def_sfe} and Remark~\ref{rem:stronger}. 

\begin{cor}\label{cor:vanish}
Let $s\in\C$, $\Rea s > 0$, $s\not=1/2$. Then 
\[
 \SFE_s^{\omega,\as,\pm} = \left\{ \varphi\in \SFE_{s}^{\omega,\hol,\pm} \ 
\left\vert\ \exists\, c\in V, \pr_r(c)=0\colon \varphi(x) = \frac{c}{x} + O_{x\to0^+}(1) 
\right.\right\}.
\]
\end{cor}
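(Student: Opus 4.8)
The inclusion ``$\supseteq$'' is immediate from the definitions: if $\varphi\in\SFE_{q,s}^{\omega,\pm}$ admits some $c\in V$ with $\pr_r(c)=0$ and $\varphi(x)=c/x+O_{x\to0^+}(1)$, then it admits such a $c$ a fortiori without the constraint $\pr_r(c)=0$, and hence lies in $\SFE_s^{\omega,\as,\pm}$ by \eqref{def_sfe}. The content of the corollary is therefore the reverse inclusion ``$\subseteq$'', so the plan is to fix $\varphi\in\SFE_s^{\omega,\as,\pm}$ and show that its leading coefficient satisfies $\pr_r(c)=0$. Writing $\varphi(x)=c/x+O_{x\to0^+}(1)$, the coefficient $c$ is uniquely determined, namely $c=\lim_{x\to0^+}x\,\varphi(x)$, so the whole task reduces to computing $\pr_r(c)$.

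First I would invoke Proposition~\ref{prop:ae_generic}, which furnishes the asymptotic expansion
\[
 \varphi(x)\sim Q_0(x)+\sum_{n=-1}^\infty C_n^\ast x^n\qquad(x\to0^+)
\]
together with the crucial normalization $\pr_r(C_{-1}^\ast)=0$. Since the $C_n^\ast$ are unique, it suffices to identify the two leading coefficients, i.e.\ to prove $c=C_{-1}^\ast$; the only obstruction to this identification is a possible $x^{-1}$--contribution of the real-analytic, $\alpha_s(g_{-1})$-invariant function $Q_0$.

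To exclude such a contribution I would run a short case distinction on $\Rea s$ using Lemma~\ref{lem:Q0}. If $\Rea s>\tfrac12$, then $\varphi(x)=c/x+O_{x\to0^+}(1)=o(x^{-2s})$, because $x^{-1}=o(x^{-2\Rea s})$ in this range; hence Lemma~\ref{lem:Q0}\eqref{asymp0} gives $Q_0\equiv0$, the expansion collapses to $\varphi(x)\sim C_{-1}^\ast x^{-1}+O_{x\to0^+}(1)$, and comparison of leading terms forces $c=C_{-1}^\ast$. If instead $0<\Rea s\le\tfrac12$ and $s\neq\tfrac12$, then subtracting the expansion from $\varphi(x)=c/x+O_{x\to0^+}(1)$ yields
\[
 Q_0(x)=(c-C_{-1}^\ast)\,x^{-1}+O_{x\to0^+}(1),
\]
and Lemma~\ref{lem:Q0}\eqref{asymp3} forces $c-C_{-1}^\ast=0$. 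In either case $c=C_{-1}^\ast$, whence $\pr_r(c)=\pr_r(C_{-1}^\ast)=0$; this establishes ``$\subseteq$'' and completes the proof.

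I expect the only genuinely delicate ingredient to be hidden inside the invoked part~\eqref{asymp3} of Lemma~\ref{lem:Q0}, namely its borderline regime $\Rea s=\tfrac12$, $s\neq\tfrac12$, where the relevant power $x^{1-2s}=x^{-2i\Ima s}$ has modulus $1$ and mere boundedness of $\wt Q_0$ does not by itself preclude a nonzero coefficient; there that case is settled by the oscillation argument drawn from the $\alpha_s(g_1)$-invariance of $\wt Q_0$. At the level of the present corollary no new estimate is needed: once Proposition~\ref{prop:ae_generic} and Lemma~\ref{lem:Q0} are in hand, the statement is purely a matter of matching leading asymptotic coefficients, which is exactly why it also follows from the proof of Corollary~\ref{cor:equal}.
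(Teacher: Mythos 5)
Your argument is correct and is essentially the paper's own: the paper derives Corollary~\ref{cor:vanish} as a byproduct of the proof of Corollary~\ref{cor:equal}, i.e.\ by combining the asymptotic expansion and normalization $\pr_r(C_{-1}^\ast)=0$ from Proposition~\ref{prop:ae_generic} with Lemma~\ref{lem:Q0} to conclude that $Q_0$ contributes no $x^{-1}$ term, forcing $c=C_{-1}^\ast$. Your write-up merely makes the case distinction on $\Rea s$ explicit, which matches the intended argument.
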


\subsubsection{Proof of Theorem~\ref{thm:main_finite}}\label{sec:proofmain}
Suppose first that $\varphi\in \SFE_s^{\omega,\as,\pm}$ and define $f=(f_{0}, 
f_{-1})^\top$ as in \eqref{phitof}. By Proposition~\ref{prop:slow_extension}, 
$\varphi$ extends holomorphically to $\C^\ast_R$ and satisfies \eqref{slow_fe} 
on $\C^\ast_R$. Thus, the definition of $f_{0}$ extends holomorphically to 
$\C^\ast_R$. Further, taking advantage of \eqref{slow_fe}, we find that 
\[
 f_{-1} = \big( 1-\alpha_s(g_{-1})\big)\varphi = \sum_{k=2}^m \big( 
\alpha_s(g_{-k}) \pm \alpha_s(Qg_{-k})\big)\varphi \pm \alpha_s(Qg_{-1})\varphi
\]
is in fact defined and holomorphic on $\C^\ast_\ell$. By the identity theorem of 
complex analysis, it suffices to show that $f$ satisfies $f=\TO_s^{\fast,\pm}f$ 
on $D_{0}\times D_{-1}$. Corollary~\ref{cor:equal} shows $\TO_{-1,s}^\fast 
f_{-1}=\alpha_s(g_{-1})\varphi$ on $\R_{>0}$.

In particular,
\[
 \big(1\pm\alpha_s(Q)\big) \TO_{-1,s}^\fast f_{-1} = \big( \alpha_s(g_{-1}) \pm 
\alpha_s(Qg_{-1})\big)\varphi.
\]
Analogously, on all of $\R_{>0}$ we have
\begin{align*}
\big(1\pm\alpha_s(Q)\big)\TO_{0,s}^\fast f_{0} & = \big(1\pm \alpha_s(Q)\big) 
\TO_{0,s}^\fast\varphi
\\
& = \TO_s^{\slow,\pm}\varphi - \big( \alpha_s(g_{-1}) \pm 
\alpha_s(Qg_{-1})\big)\varphi.
\end{align*}
Then a straightforward calculation shows
\[
 \TO_s^{\fast,\pm} f = f.
\]
If $\varphi$ satisfies \eqref{phi_decay} then $f$ obviously satisfies 
\eqref{f_decay}.

Suppose now that $f=(f_0,f_{-1})^\top\in\FFE_s^\pm$ and define $\varphi$ as in 
\eqref{ftophi}. Since $f_{0}$ and $f_{-1}$ are holomorphic in a complex 
neighborhood of $\overline{D_{0}}$ respectively of $\overline{D_{-1}}$, 
$\varphi$ is real-analytic on $(0,1)$ and even holomorphic in a complex 
neighborhood of $(0,1)$ that is rounded at $0$. Therefore it suffices to show that $\varphi$ satisfies 
$\varphi=\TO_s^{\slow,\pm}\varphi$ on $D_{-1}\cup D_{0}$. By 
Proposition~\ref{prop:seq}\eqref{seq1} we have 
$\alpha_s(g_{-1})\varphi=\TO_{-1,s}^\fast f_{-1}$ on $\R_{>0}$. Then 
$f=\TO_s^{\fast,\pm}f$ yields that on $D_{0}$,
\begin{align*}
\varphi\vert_{D_{0}} & = f_{0} = \big(1\pm\alpha_s(Q)\big) \TO_{0,s}^\fast f_{0} 
+ \big(1\pm\alpha_s(Q)\big)\TO_{-1,s}^\fast f_{-1}
\\
& = \big(1\pm\alpha_s(Q)\big) \sum_{k=2}^m \alpha_s(g_{-k})\varphi + 
\big(1\pm\alpha_s(Q)\big)\alpha_s(g_{-1})\varphi
\\
& = \TO_s^{\slow,\pm}\varphi.
\end{align*}
On $D_{-1}$ we have
\begin{align*}
\varphi\vert_{D_{-1}} & = f_{-1} + \TO_{-1,s}^\fast f_{-1}
\\
& = \big(1\pm\alpha_s(Q)\big) \TO_{0,s}^\fast f_{0} \pm \alpha_s(Q) 
\TO_{-1,s}^\fast f_{-1} + \TO_{-1,s}^\fast f_{-1}
\\
& = \TO_{s}^{\slow,\pm}\varphi.
\end{align*}
This shows $\TO_{s}^{\slow,\pm}\varphi=\varphi$. Then 
Proposition~\ref{prop:seq}\eqref{seq2} yields $\varphi\in 
\SFE_s^{\omega,\as,\pm}$. Finally, if $f$ satisfies \eqref{f_decay} then 
$\varphi$ clearly satisfies \eqref{phi_decay}. This completes the proof of the isomorphism between $\SFE_s^{\omega,\as,\pm}$ and $\FFE_s^\pm$.

In order to prove that this isomorphism descends to an isomorphism between $\SFE_s^{\omega,\dec,\pm}$ and $\FFE_s^{\dec,\pm}$ it suffices to show that $\SFE_s^{\omega,\dec,\pm}\subseteq \SFE_s^{\omega,\hol,\pm}$. To that end let $\varphi\in\SFE_s^{\omega,\dec,\pm}$, and recall the asymptotic expansion \eqref{asymexp} of $\varphi$ as $x\to0^+$. We use the notation from Proposition~\ref{prop:ae_generic}. Remark~\ref{rem:bdd} implies that $C_{-1}^*=0$ and that $\lim_{x\to0^+}\Phi_{s,\varphi}(x)$ exists and equals $C_0^*$. Then the $\alpha_s(g_{-1})$-invariance of $\Phi_{s,\varphi}$ yields that for all $x\in\R$,
\[
 \Phi_{s,\varphi}(x) = \lim_{N\to\infty} \alpha_s\big(g_{-1}^N\big)\Phi_{s,\varphi}(x) = \lim_{N\to\infty} \big( N\ell x + 1\big)^{-2s} \chi\big(g_{-1}^N\big)\Phi_{s,\varphi}\left(\frac{x}{N\ell x +1 }\right) = 0.
\]
Thus, $\varphi = \psi + \TO_s\psi$. Hence $\varphi$ extends holomorphically to a complex neighborhood of $(0,1)$ that is rounded at $0$, and therefore $\varphi\in \SFE_s^{\omega,\hol,\pm}$. This completes the proof of Theorem~\ref{thm:main_finite}. \qed

\subsection{Isomorphism for the Hecke triangle groups $\Gamma_\ell$ with 
$\ell=2\cos(\pi/q)$, $q\geq 4$ even}\label{sec:iso_even}

We use the notation from Section~\ref{sec:iso_finite}. For even $q$ the statements and proofs are almost identical to those for odd 
$q$. The necessary changes are caused by the fact that 
\[
 g_{\frac{q}2} = g_{-\frac{q}2},
\]
and that the attracting fixed point of $g_{q/2}^{-1}$ is $1$. These two properties are related to the fact that the two $\wt\Gamma$-conjugacy classes $[g_\frac{q}{2}]_{\wt\Gamma}$ and $[Qg_{\frac{q}2}]_{\wt\Gamma}$ are both related to the primitive periodic billiard on $\wt\Gamma\backslash\h$ that is represented by the geodesic on $\h$ from $-1$ to $1$, cf.\@ Section~\ref{sec:SZF}. 

For the transfer operators, $1$ being an attracting fixed point of a hyperbolic element has the effect that $1$ needs to be in the domain of definition of the functions on which the transfer operators act. Therefore, compared to the case of $q$ odd, the domains are larger. We refer to the formulas in the following, and to \cite{Pohl_spectral_hecke} for a more detailed explanation. 

In order to provide explicit formulas for the transfer operators, we note that for even $q$ we have
\[
 m = \frac{q}2 - 1.
\]
The odd and even slow transfer operator $\TO^{\slow, \pm}_{s}$ of 
$\Gamma_q$ is given by
\begin{align*}
\TO^{\slow, \pm}_{s} & =  \tfrac12 \alpha_s(g_{q/2}) \pm \tfrac12 
\alpha_s(Qg_{q/2}) + \sum_{k=1}^{m} \alpha_s(g_{-k}) \pm 
\alpha_s(Qg_{-k}) 
 \\
& =  \big( 1 \pm \alpha_s(Q)\big) \Bigg( \tfrac12 \alpha_s(g_{q/2}) + 
\sum_{k=1}^{m} \alpha_s(g_{-k})\Bigg),
\end{align*}
respectively. We consider it to act on $C^\omega( (0,1+\eps); V)$ for some $\eps > 0$ (or 
equivalently, on $C^\omega(\R_{>0};V)$). Likewise, the spaces 
$\SFE_{s}^{\omega,\pm}$, $\SFE_{s}^{\omega,\hol,\pm}$, $\SFE_{s}^{\omega,\as,\pm}$ and $\SFE_{s}^{\omega,\dec,\pm}$ are defined for functions in $C^\omega( 
(0,1+\eps);V)$.

For the odd and even fast transfer operators we need to use
\[
 \TO_{0,s}^\fast  \sceq \tfrac12\alpha_s(g_{q/2})+ \sum_{k=2}^{m} \alpha_s( 
g_{-k} ),
\]
and set
\[
 D_0 \sceq \left(\tfrac{1}{\ell} ,1\right].
\]

With these changes the statement and proof of  Theorem~\ref{thm:main_finite} 
applies for even $q$ as well.

\subsection{Isomorphism for the Theta group}\label{sec:Theta}

For the Theta group
\[
 \Gamma \sceq \Gamma_2
\]
we consider the slow and fast transfer operators that are developed in \cite{Pohl_representation}. Let 
\[
 k_1 \sceq \bmat{1}{2}{0}{1}\quad\text{and}\quad k_2 = \bmat{2}{1}{-1}{0}.
\]
In \cite{Pohl_representation}, only the full slow transfer operator for $\Gamma$ is stated, not the odd and even ones. The odd and even transfer operator are deduced by conjugating the transfer operator in \cite[Section~4.2, The reduced system]{Pohl_representation} with
\[
 \frac{1}{\sqrt{2}}\mat{\id}{\alpha_s(J)}{-\alpha_s(J)}{\id}.
\]
This conjugation provides a diagonalization of the transfer operator. The two diagonal entries are then the odd and even transfer operator.

Thus, the even (`$+$') and odd (`$-$') slow transfer operator for $\Gamma$ is (after an obvious normalizing conjugation)
\[
 \TO_s^{\slow,\pm} = \alpha_s(k_1^{-1}) + \alpha_s(k_2) \pm \alpha_s(k_2J).
\]
Both transfer operators are acting on $C^\omega( (-1,\infty);V)$. We let
\[
 \SFE_s^{\omega,\pm} \sceq \left\{\varphi\in C^\omega\big( (-1,\infty); V\big)    \ \left\vert\  \varphi = \TO_s^{\slow,\pm}\varphi  \right.\right\}
\]
be the space of real-analytic eigenfunctions with eigenvalue $1$ of $\TO_s^{\slow,\pm}$. 

Let $a\in\R$. We call a complex neighborhood $\mc U$ of the interval $(a,\infty)$ \textit{rounded at $\infty$} if there exists $x_0\in\R$ such that 
\[
 \{ z\in\C\mid \Rea z>x_0\} \subseteq \mc U.
\]
Let $\SFE_s^{\omega,\hol,\pm}$ denote the subspace of functions $\varphi\in\SFE_s^{\omega,\pm}$ that extend holomorphically to a complex neighborhood of $(-1,\infty)$ that is rounded at $-1$ and at $\infty$, and whose extensions satisfy 
\[
 f=\big(\alpha_s(k_1^{-1}) + \alpha_s(k_2) \pm \alpha_s(k_2J)\big)f
\]
on all of $\mc U$. Further, we let $\SFE_s^{\omega,\as,\pm}$ be the subspace of functions $\varphi\in \SFE_s^{\omega,\hol,\pm}$ such that there exist $c_1,c_2\in V$ (depending on $\varphi$) such that
\[
 \varphi(x) = c_1 x^{1-2s} + O_{x\to\infty}(x^{-2s}) \quad\text{and}\quad \varphi(x) = \frac{c_2}{x+1} + O_{x\to -1^+}(1).
\]
Finally, we define $\SFE_s^{\omega,\dec,\pm}$ to be the space of the functions $\varphi\in\SFE_s^{\omega,\pm}$ for which the map
\[
\begin{cases}
\varphi \pm \alpha_s(Q) \varphi & \text{on $(0,\infty)$}
\\
-\alpha_s(S)\varphi \mp \alpha_s(J)\varphi & \text{on $(-\infty,0)$}
\end{cases}
\]
extends smoothly to $\R$, and the map
\[
\begin{cases}
\varphi & \text{on $(-1,\infty)$}
\\
\mp \alpha_s(T^{-1}J)\varphi & \text{on $(-\infty,-1)$}
\end{cases}
\] 
extends smoothly to $P^1(\R)$.

In order to state the even and odd fast transfer operators for $\Gamma$ let
\[
 E_a\sceq (-1,0),\quad E_b\sceq (0,1),\quad E_c\sceq (1,\infty).
\]
Further, for $\Rea s >\tfrac12$, we set 
\[
 \TO_{1,s}^{\fast} \sceq \sum_{n\in\N} \alpha_s(k_1^{-n}), \qquad  \TO_{2,s}^\fast \sceq \sum_{n\in\N} \alpha_s(k_2^n).
\]
As for the slow transfer operator, in \cite{Pohl_representation} only the full fast transfer operator for $\Gamma$ is given explicitly. The transfer operator in \cite[Section~5.2]{Pohl_representation} can be diagonalized by the conjugation with
\[
\frac1{\sqrt{2}}
\begin{pmatrix}
1 &  & & & & \alpha_s(J)
\\
& 1 & & & \alpha_s(J)
\\
&  & 1 & \alpha_s(J)
\\
& & -\alpha_s(J) & 1
\\
& -\alpha_s(J) & & & 1
\\
-\alpha_s(J) & & & & & 1
\end{pmatrix}
\]
The even and odd transfer operators are then given by the diagonal terms. After rearranging the order of the Banach spaces and an additional normalizing conjugation, 
for $\Rea s > \tfrac12$, the even (`$+$') and odd (`$-$') fast transfer operator is given by
\[
\TO_s^{\fast,\pm} =
\begin{pmatrix}
0 & \pm\alpha_s(k_2J) & \TO_{1,s}^\fast
\\
\TO_{2,s}^\fast & \pm\alpha_s(k_2J) & \TO_{1,s}^\fast
\\
\TO_{2,s}^\fast & \pm\alpha_s(k_2J) & 0
\end{pmatrix}.
\]
Both transfer operators act on the Banach space
\[
 \mc B \sceq \mc B(E_a) \oplus \mc B(E_b) \oplus \mc B(E_c).
\]
For $\Rea s\leq \tfrac12$, these transfer operators and their components are given by meromorphic continuation.

Let 
\[
 \FFE_s^{\pm} \sceq \left\{ f\in\mc B \ \left\vert\ f=\TO_s^{\fast,\pm}f \right.\right\}
\]
and let $\FFE_s^{\dec,\pm}$ be its subspace of functions $f=(f_a,f_b,f_c)^\top\in\FFE_s^{\pm}$ such that 
\[
\begin{cases}
f_b \pm\alpha_s(Q)\left(1+\TO_{1,s}^\fast\right)f_c & \text{on $(0,1)$}
\\[1mm]
-\alpha_s(S)\left(1+\TO_{1,s}^\fast\right)f_c \mp\alpha_s(J)f_b & \text{on $(-1,0)$}
\end{cases}
\]
extends smoothly to $(-1,1)$,
\[
\begin{cases}
\left(1+\TO_{2,s}^\fast\right)f_a & \text{on $(-1,0)$}
\\[1mm]
\mp\alpha_s(T^{-1}J)f_b & \text{on $(-2,-1)$}
\end{cases}
\]
extends smoothly to $(-2,0)$, and
\[
\begin{cases}
\alpha_s(S)\left(1+\TO_{1,s}^\fast\right)f_c & \text{on $(-1,0)$}
\\[1mm]
\mp\alpha_s(ST^{-1}J)\left(1+\TO_{1,s}^\fast\right)f_c & \text{on $(0,1)$}
\end{cases}
\]
extends smoothly to $(-1,1)$.

The proof of the following theorem is analogous to that of Theorem~\ref{thm:main_finite}.

\begin{thm}\label{thm:main_theta}
Let $s\in\C\setminus\{\tfrac12\}$ with $\Rea s > 0$. Then the spaces $\SFE_s^{\omega,\as,\pm}$ and $\FFE_s^{\pm}$ are isomorphic as vector spaces. The isomorphism is given by 
\[
 \FFE_s^\pm\to \SFE_s^{\omega,\as,\pm},\quad f=(f_a,f_b,f_c)^\top \mapsto \varphi,
\]
where
\[
 \varphi\vert_{E_a} \sceq \left(1+\TO_{2,s}^\fast\right)f_a\vert_{E_a},\quad \varphi\vert_{E_b}\sceq f_b\vert_{E_b} \quad\text{and}\quad \varphi\vert_{E_c} \sceq \left( 1+\TO_{1,s}^\fast\right)f_c\vert_{E_c}.
\]
The inverse isomorphism is 
\[
 \SFE_s^{\omega,\as,\pm} \to \FFE_s^\pm, \quad \varphi\mapsto f=(f_a,f_b,f_c)^\top,
\]
where $f$ is determined by
\[
 f_a\vert_{E_a} \sceq \big(1-\alpha_s(k_2)\big)\varphi\vert_{E_a},\quad f_b\vert_{E_b}\sceq \varphi\vert_{E_b} \quad\text{and}\quad f_c \sceq \big(1-\alpha_s(k_1^{-1})\big)\varphi\vert_{E_c}.
\]
These isomorphisms induce isomorphisms between $\SFE_s^{\omega,\dec,\pm}$ and $\FFE_s^{\dec,\pm}$.
\end{thm}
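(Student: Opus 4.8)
The plan is to transplant the argument for Theorem~\ref{thm:main_finite} to the present three-interval, two-cusp situation, the only genuine novelty being that the single parabolic induction on $g_{-1}$ is replaced by two independent inductions: one on $k_1$ (fixing $\infty$, producing $\TO_{1,s}^\fast$ on $E_c$) and one on $k_2$ (fixing $-1$, producing $\TO_{2,s}^\fast$ on $E_a$), while the middle interval $E_b$ carries no induction. First I would establish the analogues of Propositions~\ref{prop:slow_extension} and \ref{prop:fast_extension}: every $\varphi\in\SFE_s^{\omega,\pm}$ extends holomorphically from $(-1,\infty)$ to the maximal domain on which the cocycles attached to $k_1^{-1}$, $k_2$ and $k_2J$ are simultaneously defined, and satisfies $\varphi=\TO_s^{\slow,\pm}\varphi$ there; and each component $f_a,f_b,f_c$ of $f\in\FFE_s^\pm$, together with the summed pieces $\TO_{2,s}^\fast f_a$ and $\TO_{1,s}^\fast f_c$, extends holomorphically to complex neighbourhoods of $\overline{E_a},\overline{E_b},\overline{E_c}$. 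As in the finite case these statements are read off from the contraction properties collected in the Theta-group analogue of Lemma~\ref{lem:allcontracts}; their purpose is purely to guarantee that both sides of the isomorphism formulas are genuine real-analytic functions on a common domain, so that the subsequent identities may be checked on $E_b$ and propagated by the identity theorem.

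Next I would prove the two crucial identities, the analogues of Proposition~\ref{prop:seq}\eqref{seq1} and of Corollary~\ref{cor:equal} for each parabolic. For $\Rea s>\tfrac12$ the functoriality $\alpha_s(k_1^{-1})\alpha_s(k_1^{-n})=\alpha_s(k_1^{-(n+1)})$ gives $\alpha_s(k_1^{-1})(1+\TO_{1,s}^\fast)f_c=\TO_{1,s}^\fast f_c$, and likewise $\alpha_s(k_2)(1+\TO_{2,s}^\fast)f_a=\TO_{2,s}^\fast f_a$, both extended to all admissible $s$ by meromorphic continuation. Conversely, setting $\psi_1\sceq(1-\alpha_s(k_1^{-1}))\varphi$ and $\psi_2\sceq(1-\alpha_s(k_2))\varphi$, I would introduce the error terms $Q_0^{(1)}\sceq\alpha_s(k_1^{-1})\varphi-\TO_{1,s}^\fast\psi_1$ and $Q_0^{(2)}\sceq\alpha_s(k_2)\varphi-\TO_{2,s}^\fast\psi_2$ as in Proposition~\ref{prop:ae_generic}, show that $Q_0^{(1)}$ is $\alpha_s(k_1^{-1})$-invariant and $Q_0^{(2)}$ is $\alpha_s(k_2)$-invariant, and run the argument of Lemma~\ref{lem:Q0} to conclude that the two growth hypotheses defining $\SFE_s^{\omega,\as,\pm}$, namely $\varphi(x)=c_1x^{1-2s}+O_{x\to\infty}(x^{-2s})$ and $\varphi(x)=c_2/(x+1)+O_{x\to-1^+}(1)$, force $Q_0^{(1)}=Q_0^{(2)}=0$. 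This yields $\TO_{1,s}^\fast\psi_1=\alpha_s(k_1^{-1})\varphi$ and $\TO_{2,s}^\fast\psi_2=\alpha_s(k_2)\varphi$ on the relevant half-lines.

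With these identities in hand the isomorphism is bookkeeping. In the direction $\FFE_s^\pm\to\SFE_s^{\omega,\as,\pm}$ one substitutes the crucial identities into the three rows of $f=\TO_s^{\fast,\pm}f$ and reads off that $\varphi=((1+\TO_{2,s}^\fast)f_a,\,f_b,\,(1+\TO_{1,s}^\fast)f_c)$ satisfies $\varphi=\alpha_s(k_1^{-1})\varphi+\alpha_s(k_2)\varphi\pm\alpha_s(k_2J)\varphi=\TO_s^{\slow,\pm}\varphi$ on $E_b$, hence on $(-1,\infty)$ by holomorphic continuation; the required asymptotics at $\infty$ and at $-1^+$ come from the Lerch expansion \eqref{lerch_exp} applied to $(1+\TO_{1,s}^\fast)f_c$ and $(1+\TO_{2,s}^\fast)f_a$, giving $\varphi\in\SFE_s^{\omega,\as,\pm}$ (the analogue of Proposition~\ref{prop:seq}\eqref{seq2}). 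In the converse direction one checks row by row that $f$ with $f_a=(1-\alpha_s(k_2))\varphi$, $f_b=\varphi$, $f_c=(1-\alpha_s(k_1^{-1}))\varphi$ is a fixed point of $\TO_s^{\fast,\pm}$, using the slow functional equation on $E_b$ together with the two crucial identities to collapse $\TO_{1,s}^\fast f_c$ and $\TO_{2,s}^\fast f_a$ to $\alpha_s(k_1^{-1})\varphi$ and $\alpha_s(k_2)\varphi$. Finally, tracking the three smoothing maps defining $\SFE_s^{\omega,\dec,\pm}$ and $\FFE_s^{\dec,\pm}$ through these substitutions shows that the two decay conditions correspond under the isomorphism.

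The main obstacle I expect is the vanishing lemma for the non-standard parabolic $k_2$. Since $k_2$ fixes $-1$ rather than $\infty$, the $\alpha_s(k_2)$-invariance of $Q_0^{(2)}$ must first be converted into boundedness by conjugating $k_2$ to a cusp at $\infty$ (the role played by $Q$ in Lemma~\ref{lem:Q0}, via $Qg_{-1}Q=g_1$), after which the $c_2/(x+1)$ asymptotic at $-1^+$ becomes the growth condition at $\infty$ to which the boundedness-versus-growth dichotomy applies; the $k_1$ case, by contrast, fixes $\infty$ already and is handled directly through $\alpha_s(k_1^{-1})$-twisted periodicity. The borderline stratum $\Rea s=\tfrac12$, $s\neq\tfrac12$ will again require the delicate $\limsup$ estimate of Lemma~\ref{lem:Q0}\eqref{asymp3}, now carried out separately at each of the two cusps.
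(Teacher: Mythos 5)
Your proposal follows exactly the route the paper takes: the paper's entire proof of Theorem~\ref{thm:main_theta} is the single remark that it is analogous to that of Theorem~\ref{thm:main_finite}, and your outline is a correct, more detailed unfolding of that analogy, correctly isolating the genuinely new points (two independent parabolic inductions on $k_1$ and $k_2$, the conjugation of the parabolic $k_2$ fixing $-1$ to a cusp at $\infty$ before running the vanishing argument of Lemma~\ref{lem:Q0}, and the borderline stratum $\Rea s=\tfrac12$). Nothing further is needed.
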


\subsection{Isomorphism for non-cofinite Hecke triangle groups}\label{sec:iso_nonco}

Let 
\[
 \Gamma \sceq \Gamma_\ell
\]
be a Hecke triangle group with parameter $\ell>2$, thus a non-cofinite Fuchsian group. We consider the slow and fast transfer operators from \cite{Pohl_hecke_infinite, Pohl_representation}. To improve readibility we omit the dependence on $\ell$ in the notation.

Let
\[
 a_1 \sceq \bmat{1}{\ell}{0}{1} \quad\text{and}\quad a_2 \sceq \bmat{\ell}{1}{-1}{0}.
\]
The even and odd slow transfer operator for $\Gamma$ is given by
\[
\TO_{s}^{\slow, \pm} = \alpha_s(a_2) + \alpha_s(a_1^{-1})  \pm \alpha_s(a_2J), 
\]
acting on $C^\omega((-1,\infty);V)$. We define
\[
 \SFE_s^{\omega,\pm} \sceq \left\{ \varphi\in C^\omega\big( (-1,\infty); V\big) \ \left\vert\ \varphi = \TO_{s}^{\slow, \pm}\varphi \right.\right\}
\]
to be the space of real-analytic eigenfunctions with eigenvalue $1$ of $\TO_{s}^{\slow, \pm}$. Let $\SFE_s^{\omega,\hol,\pm}$ be its subspace of functions $\varphi\in\SFE_s^{\omega,\pm}$ that extend holomorphically to a complex neighborhood of $(-1,\infty)$ rounded at $\infty$ and whose extension satisfy the functional equation
\[
 f=\big(\alpha_s(a_2) + \alpha_s(a_1^{-1})  \pm \alpha_s(a_2J)\big)f.
\]
Further let
\[
 \SFE_s^{\omega,\as,\pm}\sceq \left\{ \varphi\in \SFE_s^{\omega,\hol,\pm}\ \left\vert\ \exists\, c\in V\colon \varphi(x) = cx^{1-2s} + O_{x\to\infty}(x^{-2s}) \vphantom{\SFE_s^{\omega,\pm}}\right.\right\}.
\]
In order to state the fast even and odd transfer operator we set
\[
E_1\sceq (-1,1)\quad\text{and}\quad E_2\sceq (\ell-1,\infty).
\]
For $\Rea s > \tfrac12$ we define
\[
 \TO_{1,s}^{\fast} \sceq \sum_{n\in\N}\alpha_s(a_1^{-n}).
\]
Then the fast even and odd transfer operator is (for $\Rea s > \tfrac12$)
\[
\TO_{s}^{\fast,\pm} =
\begin{pmatrix}
\alpha_s(a_2) \pm \alpha_s(a_2J) & \TO_{1,s}^\fast
\\[1mm]
\alpha_s(a_2) \pm \alpha_s(a_2J) & 0
\end{pmatrix},
\]
which acts on the Banach space
\[
 \mc B\sceq \mc B(E_1) \oplus \mc B(E_2).
\]
For $\Rea s \leq \tfrac12$, these transfer operators and their components are defined by meromorphic continuation. Let
\[
 \FFE_s^\pm \sceq \left\{ f \in \mc B \ \left\vert\  f= \TO_{s}^{\fast,\pm}f \right.\right\}.
\]

The proof of the following theorem is analogous to that of Theorem~\ref{thm:main_finite}.

\begin{thm}\label{thm:main_nonco}
Let $s\in\C\setminus\{\tfrac12\}$ with $\Rea s > 0$. Then the spaces $\SFE_{s}^{\omega,\as,\pm}$ and
$\FFE_{s}^\pm$ are isomorphic as vector spaces. The isomorphism is given by 
\[
 \FFE_{s}^\pm \to \SFE_{s}^{\omega,\as,\pm},\quad f=(f_1,f_2)^\top \mapsto \varphi,
\]
where
\[
 \varphi\vert_{(-1,1)} \sceq f_1\vert_{(-1,1)}\quad\text{and}\quad \varphi\vert_{(-1+\ell,\infty)} \sceq \left(1+\TO_{1,s}^\fast\right)f_2\vert_{(-1+\ell,\infty)}.
\]
The inverse isomorphism is 
\[
 \SFE_{s}^{\omega,\as,\pm} \to \FFE_{s}^\pm,\quad \varphi\mapsto f=(f_1,f_2)^\top,
\]
where $f$ is determined by
\[
 f_1\vert_{(-1,1)} \sceq \varphi\vert_{(-1,1)} \quad\text{and}\quad f_2\vert_{(-1+\ell,\infty)}\sceq  \big(1-\alpha_s(a_1^{-1})\big)\varphi\vert_{(-1+\ell,\infty)}.
\]
\end{thm}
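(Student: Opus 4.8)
The plan is to mirror, step for step, the proof of Theorem~\ref{thm:main_finite}, with the single parabolic generator $a_1=T_\ell$ (whose fixed point is the cusp $\infty$) playing the role that $g_{-1}$ played there. Accordingly, $\TO_{1,s}^\fast=\sum_{n\in\N}\alpha_s(a_1^{-n})$ is the component of the fast operator obtained by inducing on $a_1$, and the whole argument reduces to the single \emph{crucial identity}
\[
 \TO_{1,s}^\fast f_2 = \alpha_s(a_1^{-1})\varphi \qquad\text{on }(\ell-1,\infty),
\]
to be verified both when $f=(f_1,f_2)^\top\in\FFE_s^\pm$ and $\varphi$ is its image under the map $f\mapsto\varphi$ of the statement, and dually when $\varphi\in\SFE_s^{\omega,\as,\pm}$ and $f$ is its image under the inverse map. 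Granting it, the two maps are mutually inverse because $1+\TO_{1,s}^\fast$ and $1-\alpha_s(a_1^{-1})$ are formal inverses along $E_2$, exactly as in \eqref{ftophi}--\eqref{phitof}.

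\textbf{Common domains of holomorphy.} First I would prove the analogues of Propositions~\ref{prop:slow_extension} and \ref{prop:fast_extension} for $\Gamma_\ell$. The hyperbolic letters $a_2$ and $a_2Q$ strictly contract suitable bounded complex neighbourhoods towards the non-cuspidal part of the dynamics, so the contraction bookkeeping of Lemma~\ref{lem:allcontracts} carries over. From it I would deduce that every $\varphi\in\SFE_s^{\omega,\pm}$ extends holomorphically to a fixed half-plane type domain containing $(-1,\infty)$ and keeps satisfying $\varphi=\TO_s^{\slow,\pm}\varphi$ there, and that each $f_1,f_2$ of an element of $\FFE_s^\pm$ extends holomorphically to complex neighbourhoods of $\overline{E_1},\overline{E_2}$ on which the fast functional equation persists. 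This puts $\varphi$ and the components of $f$ on a common domain where they can be compared by the identity theorem.

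\textbf{The crucial identity.} This is the heart of the matter and the step I expect to be the main obstacle. Setting $\psi\sceq(1-\alpha_s(a_1^{-1}))\varphi$ as in \eqref{defpsi}, I would first establish, for $\Rea s>\tfrac12$ and then by meromorphic continuation in $s$, the shift relation $\alpha_s(a_1^{-1})\TO_{1,s}^\fast=\TO_{1,s}^\fast-\alpha_s(a_1^{-1})$, the exact analogue of \eqref{shiftLs}; since $a_1=T_\ell$ acts by the translation $x\mapsto x+\ell$, the continuation of $\TO_{1,s}^\fast$ is governed by the Lerch zeta function after diagonalising $\chi(a_1)$, precisely as in Propositions~\ref{prop:seq} and \ref{prop:ae_generic}. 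The shift relation makes the defect $Q_0\sceq\alpha_s(a_1^{-1})\varphi-\TO_{1,s}^\fast\psi$ real-analytic near $\infty$ and $\alpha_s(a_1^{-1})$-invariant, and it yields, modulo $Q_0$, an asymptotic expansion of $\varphi$ at $\infty$ whose leading term $C^\ast x^{1-2s}$ has $C^\ast$ forced into the $\chi(a_1)$-fixed subspace of $V$ (the analogue of $\pr_r(C^\ast_{-1})=0$). The $\alpha_s(a_1^{-1})$-invariance renders $Q_0$ quasiperiodic under $x\mapsto x+\ell$, hence bounded near $\infty$; the growth restriction $\varphi(x)=cx^{1-2s}+O_{x\to\infty}(x^{-2s})$ then forces $Q_0=0$ by the dichotomy of Lemma~\ref{lem:Q0}, the borderline $\Rea s=\tfrac12$ being treated by the oscillation argument with $x^{-2i\Ima s}$. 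Together with the companion fast-side identity $\alpha_s(a_1^{-1})(1+\TO_{1,s}^\fast)f_2=\TO_{1,s}^\fast f_2$ (the analogue of Proposition~\ref{prop:seq}\eqref{seq1}), this gives the crucial identity in both directions.

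\textbf{Assembling the isomorphism.} Finally I would substitute the definitions into the two functional equations. Starting from $\varphi\in\SFE_s^{\omega,\as,\pm}$ with $f$ as in the inverse map, the slow equation $\TO_s^{\slow,\pm}\varphi=\varphi$ combined with $\TO_{1,s}^\fast f_2=\alpha_s(a_1^{-1})\varphi$ reproduces, by the same bookkeeping as in the proof of Theorem~\ref{thm:main_finite}, both rows of $f=\TO_s^{\fast,\pm}f$; the holomorphy from the first step promotes these real-interval identities to the complex neighbourhoods. Conversely, for $f\in\FFE_s^\pm$ the reconstructed $\varphi$ satisfies the slow equation on $E_1\cup E_2$, and the analogue of Proposition~\ref{prop:seq}\eqref{seq2} guarantees that $(1+\TO_{1,s}^\fast)f_2(x)=cx^{1-2s}+O_{x\to\infty}(x^{-2s})$, so that $\varphi$ actually lands in $\SFE_s^{\omega,\as,\pm}$ rather than merely in $\SFE_s^{\omega,\pm}$. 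The two maps being inverse to one another by construction, this completes the proof.
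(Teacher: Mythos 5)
Your proposal is correct and takes essentially the same approach as the paper: the paper's own proof of this theorem consists only of the remark that it is analogous to that of Theorem~\ref{thm:main_finite}, and your write-up is precisely that analogy carried out, with $a_1$ in the role of the parabolic $g_{q,-1}$, the crucial identity $\TO_{1,s}^\fast f_2=\alpha_s(a_1^{-1})\varphi$ in place of \eqref{identity_1}--\eqref{identity_2}, the quasiperiodic defect $Q_0$ killed at $\infty$ by the growth condition as in Lemma~\ref{lem:Q0}, and the same assembly of the slow and fast functional equations. No gaps beyond what the paper itself delegates to the cited construction of $\TO_{1,s}^\fast$ and its meromorphic continuation.
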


\section{A few remarks}\label{remarks}

\begin{enumerate}[(a)]
\item The explicit formulas for the isomorphism maps in Theorems~\ref{thm:main_finite}, \ref{thm:main_theta} and \ref{thm:main_nonco} clearly show that these maps are compatible with those additional conditions on the eigenfunctions that can be expressed in similar ways for the spaces $\FFE^\pm_s$ and $\SFE^{\omega,\as,\pm}_s$. For example, every additional condition that can the expressed in terms of acting elements will result in an equivariance for the isomorphism maps.

Indeed, Theorems~\ref{thm:main_finite}, \ref{thm:main_theta} and \ref{thm:main_nonco} are themselves examples for the latter observation if we use Theorem~A as a starting point. To be more precise, let 
\[
 \FFE_s \sceq \{ f \mid f=\TO_s^\fast f\} \quad\text{and}\quad \SFE_s \sceq \{ f\mid f=\TO_s^\slow f\}
\]
be the eigenspaces with eigenvalue $1$ of $\TO_s^\fast$ and $\TO_s^\slow$, respectively. Since we only intend to provide a sketch for the mentioned compatibility of the isomorphism map with certain symmetries, we do not discuss the necessary regularity properties of the eigenfunctions. Let 
\[
 \Phi\colon \FFE_s \to \SFE_s
\]
be the isomorphism map in Theorem~A that is constructed analogously to the isomorphism maps in Theorem~B (see Theorems~\ref{thm:main_finite}, \ref{thm:main_theta} and \ref{thm:main_nonco}). We did not provide a separate formula for the isomorphism map $\Phi$. However, $\Phi$ is essentially the pair of the two isomorphism maps in Theorem~B (consisting of the isomorphism maps $\FFE^+_s\to \SFE^{\omega,\as,+}_s$ and $\FFE^-_s\to\SFE^{\omega,\as,-}_s$).

The even and odd eigenfunctions of the fast transfer operator $\TO_s^\fast$ are then detected by invariance and anti-invariance under $\alpha_s(Q)$, respectively, and likewise for the slow transfer operator $\TO_s^\slow$. The isomorphism map $\Phi$ is $\alpha_s(Q)$-equivariant. Theorem~B or, more precisely, Theorems~\ref{thm:main_finite}, \ref{thm:main_theta} and \ref{thm:main_nonco} (for the latter, using $\alpha_s(J)$ instead of $\alpha_s(Q)$) state the already refined isomorphisms between the spaces of even or odd eigenfunctions.

We leave the investigation of further additional conditions for future work. Examples that should be considered include other exterior symmetries such as, e.\,g., Hecke operators. Also other types of conditions, e.\,g., fixed values at common fixed points, are of interest.

\item\label{remb} Patterson conjectured a relation between the divisors of Selberg zeta functions and certain cohomology spaces \cite{Patterson_israel} (see also \cite{Bunke_Olbrich_annals, Juhl_book, Deitmar_Hilgert_cohom}). For Fuchsian lattices $\Gamma$, Bruggeman, Lewis and Zagier provided a characterization of the space of Maass cusp forms for $\Gamma$ with spectral parameter $s$ as the space of parabolic $1$-cohomology with values in the semi-analytic, smooth vectors of the principal series representation for the parameter $s$ \cite{BLZ_part2}. In connection with the Selberg trace formula, these results support Patterson's conjecture.

In \cite{Moeller_Pohl, Pohl_spectral_hecke, Pohl_representation} the second author (for $\Gamma_\ell$ with $\ell<2$  jointly with M\"oller) established an (explicit) isomorphism between $\SFE_s^{\omega,\dec,\pm}$ and the corresponding cohomology space from \cite{BLZ_part2}. In turn, Theorems~A and B support Patterson's conjecture within a transfer operator framework (and without using the Selberg trace formula).

We stress that the relation which arises from the transfer operator techniques between those spectral zeros of the Selberg zeta function which are spectral parameters of Maass cusp forms and the (dimension of the) cohomology spaces  is canonical. In particular, this relation does not depend on the choice of an admissible discretization for the geodesic flow. 

It would be interesting to see if there is also such a cohomological interpretation of $\SFE_s^{\omega,\as,\pm}$ if $s$ is not a spectral parameter of a Maass cusp form. Moreover, it would be desirable to find an extension of such a cohomological framework which allows to include non-trivial representations as well as non-cofinite Fuchsian groups.

\item At the state of art, Theorem~\ref{slow_props} and its generalizations \cite{Pohl_mcf_general, Moeller_Pohl, Pohl_mcf_Gamma0p} are restricted to cofinite Fuchsian groups. 

However, Patterson showed that also for non-cofinite Hecke triangle groups $\Gamma$ there is at least one (normalized) $L^2$-eigenfunction of the Laplace--Beltrami operator with spectral parameter $\delta = \dim_H \Lambda(\Gamma)$ being the Hausdorff dimension of the limit set $\Lambda(\Gamma)$ of $\Gamma$ \cite{Patterson}. 
Moreover, Lax and Phillips investigated the spectral theory of the Laplacian on hyperbolic manifolds of any dimension \cite{Lax_Phillips_latticepoints, Lax_Phillips_translationI, Lax_Phillips_translationII, Lax_Phillips_translationIII}. For non-cofinite Hecke triangle groups, these results show that all $L^2$-eigenvalues of the Laplace--Beltrami operator are contained in the interval $(0, 1/4)$, and that there is at least one. In particular, there are no $L^2$-eigenvalues embedded into the absolutely continuous spectrum. 

By these spectral results, it is reasonable to expect that an analogue of Theorem~\ref{slow_props} is valid for non-cofinite Hecke triangle groups as well. 
\item It is expected that analogues of Theorem~\ref{slow_props} can be shown for $\Rea s\notin (0,1)$, $\chi$ any finite-dimensional unitary representation, and $\Gamma$ cofinite or non-cofinite, see \cite[Section~7]{Pohl_hecke_infinite}, \cite[Conjectures~4.2, 4.6]{Pohl_representation}. In this case, the role of Maass cusp forms is expected to be played (in some way) by $\chi$-twisted resonant states.
\item It would further be desirable to characterize the elements in $\SFE_s^{\omega,\as,\pm}$ that are not contained in $\SFE_s^{\omega,\dec,\pm}$ purely in a transfer operator framework (in particular, without relying on the Selberg trace formula). A complete characterization would allow us to provide---as a by-product, and independent of the Selberg trace formula---a complete classification of the zeros of the Selberg zeta function. For the case that $\Gamma$ is the modular group $\PSL_2(\Z)$ and $\chi$ is the trivial one-dimensional representation, the combination of \cite{Bruggeman, Chang_Mayer_transop, Chang_Mayer_period, Lewis_Zagier, Deitmar_Hilgert} provides such characterizations.
\end{enumerate}

\appendix

\section{Odd and even Selberg zeta functions}\label{sec:SZF_proof}

Recall the Selberg zeta functions $Z$ and $Z_\pm$ from Section~\ref{sec:SZF}. In this section we provide a sketch of the proof that $Z=Z_+\cdot Z_-$. 

\begin{lemma}
For all Hecke triangle groups $\Gamma$ and all finite-dimensional unitary representations $\chi$ we have $Z=Z_+\cdot Z_-$.
\end{lemma}

\begin{proof}[Sketch of proof]
It suffices to show the equality
\[
 Z(s) = Z_+(s)Z_-(s)
\]
for those $s\in\C$ for which $Z$ and $Z_\pm$ are given by the infinite products from Section~\ref{sec:SZF}. Equality on $\C\setminus\{\text{poles}\}$ then follows from meromorphic continuation.

Let $\Gamma$ be a Hecke triangle group, and let $\TO_s^\fast$, $\TO_s^{\fast,\pm}$ denote its fast transfer operators. From \cite{Moeller_Pohl, Pohl_hecke_infinite, Pohl_representation} it is known (or easily deduced) that 
\[
 Z(s) = \det\left(1-\TO_s^\fast\right)
\]
and 
\[
 \det\left(1-\TO_s^\fast\right) = \det\left(1-\TO_s^{\fast,+}\right)\det\left(1-\TO_s^{\fast,-}\right).
\]
Thus, it suffices to show 
\begin{equation}\label{zetatoshow}
 Z_\pm(s) = \det\left(1-\TO_s^{\fast,\pm}\right).
\end{equation}

We provide a sketch of the proof of \eqref{zetatoshow} only for the Hecke triangle groups $\Gamma_\ell$ with $\ell=2\cos\tfrac{\pi}q$, $q\in\N_{\geq 4}$ even, and the odd transfer operator. All remaining instances of \eqref{zetatoshow} are shown analogously. 

Let $\Gamma\sceq \Gamma_\ell$ and set $\wt\Gamma\sceq \langle Q,\Gamma\rangle$. Recall $g_\mu$ from \eqref{elemmu} and $[\wt\Gamma]_{p,\mu}$ from \eqref{setmu}. We assign to $\mu$ the numerical value 
\[
 \mu\sceq \frac{q}2.
\]
Note that definitions \eqref{elemmu} and \eqref{defgqk} remain consistent, and $g_\mu = g_{-\mu}$.

Let 
\[
 \Gen \sceq \{ g_{-2},\ldots, g_{-\mu}, Qg_{-2},\ldots, Qg_{-\mu}\} \cup \{ g_{-1}^k, Qg_{-1}^k \mid k\in\N\}.
\]
For $h=h_1\ldots h_n$ with $n\in\N$ and $h_j\in \Gen$, $1\leq j\leq n$, let 
\[
 b_s^-(h) \sceq \frac{(-1)^\eps}{2^k}\alpha_s(h),
\]
where 
\[
 \eps \sceq \eps(h) \sceq \#\left\{ j\in\{1,\ldots, n\} \ \left\vert\  h_j\in\left\{ Qg_{-2},\ldots, Qg_{-\mu}, Qg_{-1}^\ell\ \left\vert\  \ell \in\N \vphantom{Qg_{-1}^\ell} \right.\right\}  \right.\right\}
\]
and
\[
 k\sceq k(h)\sceq \#\{ j\in\{1,\ldots, n\} \mid h_j\in\{g_\mu, Qg_\mu\}\}.
\]
We consider $h$ as a word of length $n$ over $\Gen$ and call $h$ \textit{reduced} if it does not contain a subword of the form $g_{-1}^{m_1}g_{-1}^{m_2}$ or $Qg_{-1}^{m_1}g_{-1}^{m_2}$ with $m_1,m_2\in\N$. We let $W_n^\redu(\Gen)$ denote the set all of reduced words over $\Gen$ of length $n$, and define
\[
 W_*^\redu(\Gen) \sceq \bigcup_{n\in\N} W^\redu_n(\Gen).
\]
For $n\in\N$ we let $C_1^n$ be the subset of words in $W^\redu_n(\Gen)$ that end with $g^\ell_{-1}$ or $Qg^\ell_{-1}$ for some $\ell\in\N$ and do not begin with $g_{-1}^k$ for any $k\in\N$. Further we let $C_2^n$ be the subset of words in $W^\redu_n(\Gen)$ that end with an element of $\{ g_k, Qg_k\mid k\in\{-\mu,\ldots, -2\}\}$.

By \cite[Lemma~6.2]{Pohl_spectral_hecke}, 
\[
 \left(\TO_s^{\fast,-}\right)^n = \mat{\sum\limits_{a\in C_1^n} b_s^-(a)}{\ast}{\ast}{\sum\limits_{a\in C_2^n}b_s^-(a)}.
\]
The off-diagonal entries are known as well but are not of importance for our applications. For all $a\in C_1^n\cup C_2^n$ we have (combine \cite[6.4]{Pohl_spectral_hecke} and \cite[Lemma~5.2]{Pohl_representation})
\[
 \Tr b_s^-(a) = \frac{\det a}{2^{k(a)}} \frac{N(a)^{-s}}{1-\det a\cdot N(a)^{-1}} \tr\chi(a).
\]

Let 
\[
 Z_\reg(s) \sceq \prod_{[g]_{\wt\Gamma}\in [\wt\Gamma]_{p,\mu}} \prod_{k=0}^\infty \det\left( 1 - \det g^{k+1}\cdot \chi(g) N(g)^{-(s+k)}\right).
\]
Then 
\begin{equation}\label{logzeta}
 \log Z_-(s) = \log Z_\reg(s) + \log Z_{\mu,\id} - \log Z_{\mu, Q}.
\end{equation}
Let 
\[
 [\wt\Gamma]_{h,\mu} \sceq \left\{ [g^n]_{\wt\Gamma} \ \left\vert\  [g]_{\wt\Gamma}\in [\wt\Gamma]_{p,\mu},\ n\in\N \vphantom{[g^n]_{\wt\Gamma}}  \right.\right\}.
\]
Using \cite[Proof of Theorem~6.1]{Pohl_spectral_hecke} with the extension to unitary representations as in \cite{Pohl_representation} we see that 
\begin{equation}\label{part1zeta}
 \log Z_\reg(s) = -\sum_{n\in\N} \frac1n \sum_{\substack{a\in C_1^n\cup C_2^n \\ [a]_{\wt\Gamma}\in [\wt\Gamma]_{h,\mu}}} \Tr b_s^-(a).
\end{equation}
In order to relate the other summands in \eqref{logzeta} to the traces of the transfer operator we let $W_\mu\sceq W_*^\redu(\{g_\mu, Qg_\mu\})$ denote the set of words over the alphabet $\{g_\mu, Qg_\mu\}$. Note that each element in $[\wt\Gamma]\smallsetminus [\wt\Gamma]_{h,\mu}$ has a representative in $W_\mu$.

Analogously to \cite[Proof of Theorem~6.1]{Pohl_spectral_hecke}  we find
\begin{align}
-\sum_{p=1}^\infty & \frac1p  \sum_{ \substack{a\in C_1^p\cup C_2^p \\ a\in W_\mu}   }\label{det0}
 \Tr b_s^-(a)  = \sum_{p=1}^\infty \frac{N(g_\mu^p)^{-s}}{2p} \left( \frac{\tr\chi(Qg_\mu^p)}{1+ N(g_\mu^p)^{-1}}  - \frac{ \tr\chi(g_\mu^p)}{1-N(g_\mu^p)^{-1}}\right) 
 \\
 & = \sum_{p=1}^\infty \frac{1}{2p} \frac{1}{1-N(g_\mu^p)^{-2}} \Big[ N(g_\mu^p)^{-s}\tr\chi(Qg_\mu^p) - N(g_\mu^p)^{-(s+1)} \tr\chi(Qg_\mu^p) \nonumber
 \\
 & \hphantom{\sum_{p=1}^\infty \frac{1}{2p} \frac{1}{1-N(g_\mu^p)^{-2}}}\qquad - N(g_\mu^p)^{-s} \tr\chi(g_\mu^p) - N(g_\mu^p)^{-(s+1)}\tr\chi(g_\mu^p)  \Big]. \nonumber
\end{align}
Further, we have
\begin{align}
\sum_{p=1}^\infty \frac{1}{2p} \frac{N(g_\mu^p)^{-s}\tr\chi(Qg_\mu^p)}{1-N(g_\mu^p)^{-2}} & = \frac12\sum_{k=0}^\infty \sum_{p=1}^\infty \frac1p N(g_\mu^p)^{-(s+2k)}\tr\chi(g_\mu^pQ) \nonumber
\\
& = \frac12\sum_{k=0}^\infty \tr\left( \sum_{p=1}^\infty \frac1p N(g_\mu^p)^{-(s+2k)}\chi(g_\mu^p)\chi(Q)\right)\nonumber
\\
& = -\frac12\log\det\exp\left( \log\left(1-\chi(g_\mu)N(g_\mu)^{-(s+2k)}\right)  \cdot\chi(Q)\right)\nonumber
\\
& = -\frac12 \log\prod_{k=0}^\infty \det\left( \left(1-\chi(g_\mu)N(g_\mu)^{-(s+2k)}\right)^{\chi(Q)} \right). \label{det1}
\end{align}
Analogously, we find
\begin{align}\label{det2}
 \sum_{p=1}^\infty \frac{1}{2p} \frac{N(g_\mu^p)^{-s}\tr\chi(g_\mu^p)}{1-N(g_\mu^p)^{-2}} = -\frac12 \log\prod_{k=0}^\infty \det\left(1-\chi(g_\mu)N(g_\mu)^{-(s+2k)}\right).
\end{align}
Using \eqref{det1} and \eqref{det2} in \eqref{det0} and comparing to \eqref{logzeta} shows that 
\[
 \log Z_{\mu,\id} - \log Z_{\mu, Q} = -\sum_{p\in\N} \frac1p \sum_{\substack{a\in C_1^p\cup C_2^p \\ a\in W_\mu}} \Tr b_s^-(a).
\]
In combination with \eqref{part1zeta} this completes the sketch of the proof.
\end{proof}

\bibliography{ap_bib}
\bibliographystyle{amsplain}

\end{document}